\sloppy\pagestyle{plain}
\newcommand{\DR}{\mathbb{R}} %real numbers
\newcommand{\DC}{\mathbb{C}}
\newcommand{\DZ}{\mathbb{Z}}
\newcommand{\DQ}{\mathbb{Q}}
\newcommand{\DP}{\mathbb{P}}%projective space
\newcommand{\MO}{\mathcal{O}}
\newcommand{\MM}{\mathcal{M}}
\newcommand{\Aut}{\mathrm{Aut}}
\newcommand{\pr}{\mathrm{pr}}
\newcommand{\PGL}{\mathrm{PGL}}
\newcommand*{\da@rightarrow}{\mathchar"0\hexnumber@\symAMSa 4B }
\newcommand*{\da@leftarrow}{\mathchar"0\hexnumber@\symAMSa 4C }
\newcommand*{\xdashrightarrow}[2][]{%
  \mathrel{%
    \mathpalette{\da@xarrow{#1}{#2}{}\da@rightarrow{\,}{}}{}%
  }%
}
\newcommand{\xdashleftarrow}[2][]{%
  \mathrel{%
    \mathpalette{\da@xarrow{#1}{#2}\da@leftarrow{}{}{\,}}{}%
  }%
}
\newcommand*{\da@xarrow}[7]{%
  % #1: below
  % #2: above
  % #3: arrow left
  % #4: arrow right
  % #5: space left 
  % #6: space right
  % #7: math style 
  \sbox0{$\ifx#7\scriptstyle\scriptscriptstyle\else\scriptstyle\fi#5#1#6\m@th$}%
  \sbox2{$\ifx#7\scriptstyle\scriptscriptstyle\else\scriptstyle\fi#5#2#6\m@th$}%
  \sbox4{$#7\dabar@\m@th$}%
  \dimen@=\wd0 %
  \ifdim\wd2 >\dimen@
    \dimen@=\wd2 %   
  \fi
  \count@=2 %
  \def\da@bars{\dabar@\dabar@}%
  \@whiledim\count@\wd4<\dimen@\do{%
    \advance\count@\@ne
    \expandafter\def\expandafter\da@bars\expandafter{%
      \da@bars
      \dabar@ 
    }%
  }%  
  \mathrel{#3}%
  \mathrel{%   
    \mathop{\da@bars}\limits
    \ifx\\#1\\%
    \else
      _{\copy0}%
    \fi
    \ifx\\#2\\%
    \else
      ^{\copy2}%
    \fi
  }%   
  \mathrel{#4}%
}
\newcommand*\circled[1]{\tikz[baseline=(char.base)]{
            \node[shape=circle,draw,inner sep=2pt] (char) {#1};}}
\newtheorem{theorem}{Theorem}[section]
\newtheorem{lemma}[theorem]{Lemma}
\newtheorem{corollary}[theorem]{Corollary}
\newtheorem*{corollary*}{Corollary}
\newtheorem*{maincorollary*}{Main Corollary}
\newtheorem*{conjecture*}{Conjecture}
\newtheorem*{problem*}{Problem}
\newtheorem*{calabiproblem*}{Calabi Problem}
\newtheorem*{Main Theorem*}{Main Theorem}
\newtheorem*{theorem*}{Theorem}
\newtheorem*{maintheorem*}{Main Theorem}
\theoremstyle{definition}
\newtheorem*{example*}{Example}
\newtheorem{definition}[theorem]{Definition}
\theoremstyle{remark}
\newtheorem{remark}[theorem]{Remark}
\newtheorem*{remark*}{Remark}
\makeatletter\@addtoreset{equation}{subsection} \makeatother
\title{On $K$-stability of  $\DP^3$ blown up along the disjoint union of  a twisted cubic curve and a line}
    \author{Elena Denisova}
\begin{document}
\maketitle

\begin{abstract}
    We find all $K$-polystable smooth Fano threefolds that can be obtained as blowup of $\DP^3$ along the disjoint union of a twisted cubic curve and a line.
\end{abstract}

\tableofcontents

\section{Introduction}
\noindent  Smooth Fano threefolds defined over the field $\DC$ have been classified in \cite{Is77,Is78,MoMu81,MoMu03} into $105$ families. The detailed description of these families can be found in \cite{Fano21}.  In \cite{Fano21} the following problem was  posed: \begin{calabiproblem*}
Find all $K$-polystable smooth Fano threefolds in each family.
\end{calabiproblem*}
 \noindent  This problem was solved for $72$ families. A great contribution to solving this problem was made by the authors of \cite{Fano21}. After their work only $34$ families were left. The same year I. Cheltsov and J. Park obtained the result for one more family \cite{FanoR2D30}.
 \noindent 
Suppose $X$ is a~general member of the~family \textnumero $\mathscr{N}$, then \cite[Main~Theorem]{Fano21} tells us that
$$
X\ \text{is } K\text{-polystable} \iff\mathscr{N}\not\in\left\{\aligned
&\ 2.23, 2.26, 2.28, 2.30, 2.31, 2.33, 2.35, 2.36, \\
& \ 3.14, 3.16, 3.18,3.21, 3.22, 3.23, \\
&\ 3.24, 3.26, 3.28, 3.29,  3.30, 3.31, \\
&\    4.5, 4.8, 4.9, 4.10, 4.11, 4.12,\\
&\  5.2
\endaligned
\right\}.
$$
 Suppose $X$ is a member of Family 3.12. Then we describe $X$  as the blowup $\pi:X\to \DP^3$ of $\DP^3$  at a twisted cubic $C$ and line  $L$ that is disjoint from $C$ (see Section \ref{geometry} for explicit description of all members of this family).  These threefolds form a one-dimensional family.  Groups of automorphisms of such threefolds are finite except for one threefold which has automorphism group $\DC^*\rtimes \DZ/2\DZ$. It was shown in \cite[\S 5.18]{Fano21} that this threefold is $K$-polystable. Moreover, it was used to show that the general member of this family is $K$-polystable. Furthermore, in \cite[\S 7.7]{Fano21} it was shown that there exists a non $K$-polystable member in this family and it was conjectured that all other smooth Fano threefolds in Family 3.12 are $K$-polystable. The goal of this work is to prove this conjecture
 and complete the~description of all $K$-polystable smooth Fano threefolds of Picard rank $3$ and degree $28$ started in \cite{Fano21}.
\begin{Main Theorem*}
All the smooth threefolds except one in  Family  3.12 are $K$-polystable.
\end{Main Theorem*}
\noindent  Hence, all smooth Fano threefolds in Family 2.12 except one described in  \cite[\S 7.7]{Fano21} admit a Kähler-Einstein metric.
\subsection{ Plan of the paper} 
In Section 2 we state the results which will use to prove Main Theorem. In Section 3 we will discuss the equivariant  geometry of $\DP^3$ which will help us to understand the  equivariant  geometry of $X$ in Family 3.12. We will focus our attention on the members in Family 3.12 for which the $K$-polystability has not been proved yet. In this section we show that $\Aut(X)\cong \Aut(\DP^3,L+C)$ contains a subgroup $G\cong \DZ/2\DZ\times \DZ/2\DZ$. We will show that there are no $G$-fixed points on $\DP^3$, describe $G$-invariant quadrics containing $C$ on $\DP^3$ and $G$-invariant lines on $\DP^3$. At the end of this section we give description of the Mori cone and the cone of effective divisors on $X$. Finally, in Section 4 we prove our Main Theorem.
\subsection{ Plan of the proof}
If $X$ is not $K$-polystable then it follows from \cite[Corollary~4.14]{Zhuang} that  there exists a~$G$-invariant prime divisor $F$ over $X$
such that $\beta(F)\leqslant 0$ where $\beta(F)$ was defined in \cite{Fujita2019}, see also \cite[Definition 1.2.1]{Fano21} and Section 2.
Let $Z$ be the center of $F$ on $X$. Then $Z$ is not a~point since $X$ has no $G$-fixed points, and $Z$ is not a~surface by \cite[Theorem~10.1]{Fujita2016},
so that $Z$ is a~$G$-invariant irreducible curve. Then we derive a contradiction as  follows:
% \begin{enumerate}
%     \item[$\circled{1}$:] $Z\not\subset E_C$ by Corollary \ref{corollary:3-12-EC}. 
%     \item[$\circled{2}$:] $Z\not\subset E_L$ by Lemma \ref{lemma:3-12-Z-in-E}. It follows that $\pi(Z)\ne L$.
%     \item[$\circled{3}$:] It follows from $\circled{1}$, $\circled{2}$ that $\pi(Z)$ is a curve.
%     \item[$\circled{4}$:] $Z\not\subset Q$ where $Q$ is the strict transform on $X$ of the $G$-invariant quadric in the linear system of quadrics in $\DP^3$ which contain $C$ by Lemma \ref{lemma:3-12-Z-in-Q}.
%     \item[$\circled{5}$:] It follows from $\circled{1}$, $\circled{2}$, $\circled{4}$ that $\pi(Z)$ is a line by \ref{lemma:3-12-curves-line}.
%     \item[$\circled{6}$:] A case when $\pi(Z)$, ($\pi(Z)\ne L$) is a line  which intersects $C$ is impossible.
%     \item[$\circled{7}$:] A case when $\pi(Z)$, ($\pi(Z)\ne L$) is a line  which does not intersect $C$ is impossible.
%     \item[$\circled{8}$:]  $\circled{2}$, $\circled{6}$, $\circled{7}$ $\Rightarrow$ $\pi(Z)$ cannot be a line.
%     \item[$\circled{9}$:]  $\circled{5}$ and $\circled{8}$ give us a contradiction.
% \end{enumerate}
\begin{enumerate}
    \item[$\circled{1}$] We use Abban-Zhuang theory (see \cite{AbbZh2020}) and its corollary \cite[Collolary 1.7.26]{Fano21} to exclude the case when $\pi(Z)$ is a line such that $\pi(Z)\ne L$ and $\pi(Z)\cap C=\emptyset$. This is done in Lemma \ref{lemma:3-12-Z-lines}.
    \item[$\circled{2}$] We use Abban-Zhuang theory (\cite{AbbZh2020}) and its corollary \cite[Collolary 1.7.26]{Fano21} to exclude the case when $\pi(Z)\subset L$.
    This is done in Lemma \ref{lemma:3-12-Z-in-E}.
    \item[$\circled{3}$] We use Abban-Zhuang theory(see \cite{AbbZh2020}) and its corollary \cite[Collolary 1.7.26]{Fano21} to show that $\pi(Z)$ is not contained in a $G$-invariant quadric passing through $C$. This is done in Lemma \ref{lemma:3-12-Z-in-Q}.
    \item[$\circled{4}$] Using $\circled{1}$, $\circled{2}$, $\circled{3}$ we deduce that $\pi(Z)$ is not a line.
    \item[$\circled{5}$] It follows from $\beta(F)\le 0$ that $Z$ is contained in $\mathrm{Nklt}(X,\lambda D)$ for some $G$-invariant effective $\DQ$-divisor $D\sim_{\DQ} -K_X$ and $\lambda\in \DQ$ such that $\lambda<\frac{3}{4}$. Moreover it follows from $\circled{2}$, $\circled{3}$ and description of the cone of effective divisors on $X$ that $Z$ is not contained in the surface in $\mathrm{Nklt}(X,\lambda D)$. See Corollary \ref{pi(Z)isnotsurf}. 
    \item[$\circled{6}$] Using $\circled{5}$ we derive that $\pi(Z)\not\subset C$. 
    This is done in Corollary \ref{corollary:3-12-EC}.
    \item[$\circled{7}$] Finally, we use $\circled{6}$ to show that $\pi(Z)$ is a line in $\DP^3$, which contradicts $\circled{4}$.
\end{enumerate}
\section{Preliminary results}
\noindent 
Let $X$ be a~Fano variety with Kawamata log terminal singularities,
let $G$ be a~reductive subgroup in $\mathrm{Aut}(X)$,
let $f\colon\widetilde{X}\to X$ be a~$G$-equivariant birational morphism,
let $F$ be a~$G$-invariant prime divisor in $\widetilde{X}$, and let $n=\mathrm{dim}(X)$.
\begin{definition}
\label{definition:divisor-over-X}\index{divisor over $X$}\index{dreamy divisor}
We say that $F$ is a~$G$-invariant prime divisor \emph{over} the~Fano variety $X$.
If~$F$~is~\mbox{$f$-exceptional}, we say that $F$ is an~exceptional $G$-invariant prime divisor \emph{over}~$X$.
We will denotes the~subvariety $f(F)$ by $C_X(F)$.
\end{definition}
\noindent Let
\index{$S_X(F)$}
$$
S_X(F)=\frac{1}{(-K_X)^n}\int_{0}^{\tau}\mathrm{vol}(f^*(-K_X)-xF)dx,
$$
where $\tau=\tau(F)$ is the~pseudo-effective threshold of $F$ with respect to $-K_X$, i.e. we have \index{pseudo-effective threshold}
\begin{equation}
\tau(F)=\mathrm{sup}\Big\{ u \in \mathbb{Q}_{>0}\ \big\vert \ f^*(-K_X)-uF\ \text{is big}\Big\}.
 \label{eq:1}
\end{equation}
Let $\beta(F)=A_X(F)-S_X(F)$,
where $A_X(F)$ is the~log discrepancy of the~divisor $F$.\index{$\beta$-invariant}
\begin{theorem}[{\cite[Corollary~4.14]{Zhuang}}]
\label{theorem:G-K-polystability}
Suppose that $\beta(F)>0$ for every $G$-invariant  prime divisor $F$ over $X$.
Then $X$ is K-polystable.
\end{theorem}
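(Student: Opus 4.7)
The plan is to establish this equivariant valuative criterion for K-polystability by bootstrapping from the non-equivariant theory of Fujita and Li. Their result says that $X$ is K-stable if and only if $\beta(F) > 0$ for every prime divisor $F$ over $X$, with an analogous characterization of K-polystability in terms of strict positivity of generalized Futaki invariants on all non-product special test configurations. The goal is to show that, when $G$ is reductive, it suffices to test the condition $\beta(F) > 0$ only on $G$-invariant prime divisors $F$ over $X$.

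First I would recall that, by the Li--Xu reduction, K-polystability is equivalent to strict positivity of Donaldson--Futaki invariants on all non-product special test configurations, together with vanishing only for configurations induced by one-parameter subgroups of the center of $\mathrm{Aut}(X)$. Next, the heart of the argument would be to show that if $X$ is not K-polystable, then one can in fact exhibit a $G$-equivariant special test configuration of $X$ with non-positive generalized Futaki invariant which is not induced by a one-parameter subgroup in the center of $G$. Finally, any such $G$-equivariant special test configuration is known to correspond, via the Boucksom--Jonsson/Blum--Xu dictionary, to a $G$-invariant divisorial valuation $v = c \cdot \mathrm{ord}_F$ for some $G$-invariant prime divisor $F$ over $X$; under this correspondence the Futaki invariant translates (up to a positive factor) into $A_X(F) - S_X(F) = \beta(F)$, so non-positivity of the former gives $\beta(F) \leqslant 0$, contradicting the hypothesis.

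The main obstacle is the equivariant reduction step: starting from an arbitrary potentially destabilizing special test configuration, one must produce a $G$-equivariant one whose Futaki invariant is no larger. Since a test configuration is discrete data rather than a continuous object, direct averaging over the maximal compact of $G$ is not available. The substitute, following the approach pioneered by Zhuang, is a $G$-equivariant MMP run on the total space of the test configuration: one starts from a (possibly non-equivariant) destabilizer, takes a $G$-translate family, and extracts from it a $G$-equivariant special degeneration using equivariant log canonical modifications and equivariant special test configuration constructions, verifying along the way that the generalized Futaki invariant does not increase. Once this equivariant reduction is in place, everything else is formal, and the contradiction with the hypothesis $\beta(F) > 0$ for all $G$-invariant $F$ completes the proof.
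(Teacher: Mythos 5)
This statement is not proved in the paper at all: it is imported verbatim as Corollary~4.14 of Zhuang's \emph{Optimal destabilizing centers and equivariant K-stability} and used as a black box, so there is no internal argument to compare yours against. Judged against the actual proof in the literature, your sketch has the right outer skeleton (Fujita--Li valuative criterion, Li--Xu reduction to special test configurations, translation of the Futaki invariant into $A_X(F)-S_X(F)$ for the associated divisorial valuation), but the step you correctly identify as the heart of the matter --- producing a $G$-equivariant destabilizer from a possibly non-equivariant one --- is where your proposed mechanism breaks down. There is no known procedure that takes the family of $G$-translates of a destabilizing special test configuration and ``extracts'' a single $G$-equivariant special degeneration by equivariant MMP or equivariant log canonical modification while keeping the generalized Futaki invariant non-positive: a test configuration is determined by a filtration of the section ring, and the natural ways of combining the translated filtrations (suprema, products, limits) do not control the Futaki invariant. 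The equivariant Li--Xu MMP only helps once one already has $G$-equivariant input.

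What Zhuang actually does is entirely different in this key step: he shows that when $X$ is destabilized, the \emph{optimal} destabilization (the minimizer of the appropriate normalized invariant, e.g.\ the valuations or filtrations computing $\delta(X)$, and in the polystable case a reduced/twisted version $\delta_G$) has a canonically defined minimal destabilizing center which is \emph{unique}; uniqueness forces invariance under all of $\mathrm{Aut}(X)$, hence under $G$, with no averaging required. This uniqueness statement (resting on convexity/finite-generation properties of the relevant invariants) is the genuine content of the theorem and is absent from your outline. Relatedly, your treatment of the polystable (rather than merely semistable) case is only gestured at: ruling out that every $G$-invariant destabilizer is ``of product type'' requires the analysis of the torus in the centralizer of $G$ and the twisted invariants, not just the remark that one must avoid central one-parameter subgroups. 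As written, the proposal would not close; if you intend to reprove the criterion rather than cite it, the uniqueness-of-optimal-destabilization argument is the missing ingredient.
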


\begin{theorem}[{\cite[Theorem~10.1]{Fujita2016}}]
\label{theorem:divisorially-stable}
Let $X$ be any~smooth Fano threefold that is not contained in the~following $41$ deformation families:
\begin{center}
\textnumero 1.17, \textnumero 2.23, \textnumero 2.26, \textnumero 2.28, \textnumero 2.30, \textnumero 2.31, \textnumero 2.33, \textnumero 2.34, \textnumero 2.35, \textnumero 2.36,\\
\textnumero 3.9, \textnumero 3.14, \textnumero 3.16, \textnumero 3.18, \textnumero 3.19, \textnumero 3.21, \textnumero 3.22, \textnumero 3.23, \textnumero 3.24, \textnumero 3.25, \\
\textnumero 3.26, \textnumero 3.28, \textnumero 3.29, \textnumero 3.30, \textnumero 3.31, \textnumero 4.2, \textnumero 4.4, \textnumero 4.5, \textnumero 4.7, \textnumero 4.8, \textnumero 4.9,\\
\textnumero 4.10, \textnumero 4.11, \textnumero 4.12, \textnumero 5.2, \textnumero 5.3, \textnumero 6.1, \textnumero 7.1, \textnumero 8.1, \textnumero 9.1, \textnumero 10.1.
\end{center}
Then $S_X(Y)<1$ for every irreducible surface $Y\subset X$, i.e. $X$ is divisorially stable.
\end{theorem}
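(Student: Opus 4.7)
The plan is to verify the estimate $S_X(Y)<1$ by going through the Mori--Iskovskikh--Mukai classification family by family, treating each of the $64$ families \emph{not} on the excluded list separately. Since $S_X(Y)$ depends on $Y$ only through its numerical class, for each family it suffices to check the estimate on a finite set of divisor classes, namely representatives of the extremal rays of the pseudo-effective cone $\overline{\mathrm{Eff}}(X)\subset N^1(X)_{\DR}$. The classification provides an explicit description of $\overline{\mathrm{Eff}}(X)$ for every family: its extremal rays are generated either by strict transforms of classical divisors via a Mori-theoretic contraction, or by the exceptional divisors of such contractions; in particular the prime divisors that can possibly have the largest $S_X$ form a finite, explicit list.

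For each extremal prime divisor $Y$, the next task is to compute the pseudo-effective threshold $\tau(Y)$ and the piecewise-polynomial function $x\mapsto \mathrm{vol}(-K_X-xY)$. I would do this by running the MMP on the pair $(X,-K_X-xY)$ as $x$ grows from $0$ to $\tau$, identifying the critical values of $x$ at which the Zariski decomposition of $-K_X-xY$ changes, and on each subinterval reading off $\mathrm{vol}$ from the positive part of the decomposition. For smooth Fano threefolds the birational geometry is completely explicit, so the calculation reduces to a bookkeeping exercise involving intersection numbers on $X$ and on its Mori-theoretic modifications. Integrating the resulting polynomial pieces in $x$ then gives $S_X(Y)$ in closed form.

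Finally, one verifies $S_X(Y)<1$ case by case. In Picard rank $1$, $Y$ is numerically proportional to $-K_X$ and the estimate reduces to a simple inequality involving the Fano index, which is checked from the classification list. In higher Picard rank the verification is family-specific: for each of the remaining families one plugs the computed volume polynomials into the integral and checks the strict inequality using intersection numbers that are tabulated in \cite{Fano21}. The bound becomes tight in some families, which is precisely why those families appear in the $41$-element exclusion list in the statement.

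The main obstacle is the volume computation in the second step: when $-K_X-xY$ ceases to be nef but is still big, one needs the Zariski decomposition on $X$ (or on a suitable modification), and tracking how this decomposition evolves with $x$ must be redone for each of the $64$ families, because the shape of the Zariski chamber structure on the pseudo-effective cone is different in each case. I would expect this case analysis, rather than any single conceptual step, to constitute the bulk of the argument; the extraction of $S_X(Y)$ from the volume function and the final numerical comparison with $1$ are comparatively routine.
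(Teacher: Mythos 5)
This statement is not proved in the paper at all: it is imported verbatim as \cite[Theorem~10.1]{Fujita2016}, so there is no internal proof to compare yours against. Judged on its own terms, your outline does track the strategy actually used in the literature (Fujita's volume computations, systematized family-by-family in \cite{Fano21}): reduce to finitely many divisor classes, compute $\mathrm{vol}(-K_X-xY)$ piecewise via Zariski decomposition as $x$ varies, integrate, and compare with $(-K_X)^3$.

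There is, however, one genuine gap in your reduction step. You assert that it suffices to check $S_X(Y)<1$ on generators of the extremal rays of $\overline{\mathrm{Eff}}(X)$ because $S_X(Y)$ depends only on the numerical class. That alone does not suffice: $S_X$ is not a function one can simply ``restrict to rays,'' since it is highly sensitive to scaling of the class ($S$ of a small positive multiple of an extremal class can be arbitrarily large). The correct reduction uses the monotonicity $\mathrm{vol}(-K_X-xD)\leqslant\mathrm{vol}(-K_X-xD')$ whenever $D-D'$ is effective, hence $S_X(D)\leqslant S_X(D')$; one must then argue that every prime divisor $Y$ not belonging to an explicit finite list of ``negative'' divisors satisfies $[Y]-[E]\geqslant 0$ for some member $E$ of that list (e.g.\ a coefficient $\geqslant 1$ in an integral decomposition, which requires knowing the integral structure of the effective cone, not just its rays). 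Without this step your finite check controls only the divisors you actually computed, not an arbitrary irreducible surface $Y\subset X$. The rest of your plan (the chamber-by-chamber Zariski decomposition and the case analysis over the $64$ non-excluded families) is the correct, if laborious, shape of the argument.
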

\begin{theorem}[{\cite[Collolary 1.7.26]{Fano21}}]
\label{corollary:Kento-formula-Fano-threefold-surface-curve}
Let $X$ be a~smooth Fano threefold, let $Y$ be an irreducible normal surface in the~threefold $X$,
let $Z$ be an irreducible curve in $Y$, and let $F$~be~a~prime divisor over the~threefold $X$ such that $C_X(F)=Z$. Then
\begin{equation}
\label{equation:Kento-formula-Fano-threefold-surface-curve}
\frac{A_X(F)}{S_X(F)}\geqslant\min\Bigg\{\frac{1}{S_X(Y)},\frac{1}{S\big(W^Y_{\bullet,\bullet};Z\big)}\Bigg\}
\end{equation}
and
$$\hspace*{-0.3cm}
S\big(W^Y_{\bullet,\bullet};Z\big)=\frac{3}{(-K_X)^3}\int_0^\tau\big(P(u)^{2}\cdot Y\big)\cdot\mathrm{ord}_{Z}\Big(N(u)\big\vert_{Y}\Big)du+\frac{3}{(-K_X)^3}\int_0^\tau\int_0^\infty \mathrm{vol}\big(P(u)\big\vert_{Y}-vZ\big)dvdu,
$$
where $P(u)$ is the~positive part of the~Zariski decomposition of the~divisor $-K_X-uY$, and $N(u)$ is its negative part.
\end{theorem}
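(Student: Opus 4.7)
The plan is to derive this formula as a two-step application of the Abban-Zhuang inequality for multi-graded linear series, specialized to the flag $X \supset Y \supset Z$.

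I would begin with the fundamental Abban-Zhuang inequality: for any prime divisor $E$ over $X$ whose center lies on a prime divisor $Y$ of $X$,
$$
\frac{A_X(E)}{S_X(E)} \geq \min\left\{\frac{A_X(Y)}{S_X(Y)}, \frac{A_Y(E)}{S(W^Y_\bullet; E)}\right\},
$$
where $W^Y_\bullet$ is the restriction of the canonical graded linear series of $-K_X$ to $Y$. Specializing to $E = F$ and noting that $X$ is smooth forces $A_X(Y) = 1$ for any prime divisor $Y$; the first branch of the minimum therefore becomes $1/S_X(Y)$, which is the first term of the stated formula.

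Next I would unpack $W^Y_\bullet$ via Zariski decomposition. On a suitable birational model $f \colon \widetilde{X} \to X$, write $f^*(-K_X) - uY = P(u) + N(u)$ with $P(u)$ nef and $N(u)$ effective. The graded piece of $W^Y_\bullet$ at level $u$ corresponds to sections of $P(u)|_Y$, while the divisor $N(u)|_Y$ is a forced base component. Iterating the Abban-Zhuang inequality on $Y$ with the prime divisor $Z$, and using $A_Y(Z) = 1$ (valid because $Y$ is normal and thus smooth at the generic point of the curve $Z$), the second branch reduces to $1/S(W^Y_{\bullet,\bullet}; Z)$, where $S(W^Y_{\bullet,\bullet}; Z)$ is the $S$-invariant of $Z$ on the bi-graded restricted linear series. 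A direct Okounkov-body computation for curves on surfaces then splits $S(W^Y_{\bullet,\bullet}; Z)$ into the forced vanishing contribution $\mathrm{ord}_Z(N(u)|_Y)$ weighted by the level volume $(P(u)^2 \cdot Y)$, and the mobile contribution $\int_0^\infty \mathrm{vol}(P(u)|_Y - vZ)\,dv$. After integrating over $u \in [0, \tau]$ and normalizing by $3/(-K_X)^3$ (the coefficient $n/(-K_X)^n$ for $n = 3$), this is precisely the displayed formula.

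The main technical obstacles I anticipate are the existence and piecewise-polynomial behavior of the Zariski decomposition of $f^*(-K_X) - uY$, handled by passing to a suitable birational model and invoking Shokurov's two-dimensional Zariski decomposition theorem after restriction to $Y$, and the absorption of the $F$-specific term produced by iterated Abban-Zhuang into the $Z$-term of the right-hand side. The latter is legitimate precisely because $C_X(F) = Z$: the valuation $F$ dominates $Z$, so its contribution in the iterated inequality is bounded below by the bi-graded $S$-invariant of $Z$ itself. Finally, the adjunction comparison $A_Y(F) = A_X(F)$ along the generic point of $Z$ requires normality of $Y$ and smoothness of $X$ along $Y$, both of which are given by hypothesis.
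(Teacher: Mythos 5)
The paper does not prove this statement at all: it is imported verbatim from \cite[Corollary~1.7.26]{Fano21} and used as a black box throughout, so there is no in-paper argument to compare yours against. Your sketch follows the standard derivation of that corollary --- the Abban--Zhuang inequality for the flag $X\supset Y\supset Z$, followed by the Zariski-decomposition computation of $S\big(W^Y_{\bullet,\bullet};Z\big)$ on the surface $Y$ --- and the overall architecture is the right one. Two points deserve tightening. First, $A_X(Y)=1$ holds because $Y$ is a prime divisor \emph{on} the normal variety $X$ (log discrepancy of a divisor on the ambient space), not because smoothness of $X$ ``forces'' it; the smoothness hypothesis is instead what guarantees the Zariski chamber structure and the applicability of Fujita-type volume formulas. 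Second, and more seriously, the ``absorption of the $F$-specific term'' is exactly the delicate step: the Abban--Zhuang theorem produces an infimum over prime divisors $E$ over $Y$ whose centre on $Y$ contains $C_X(F)=Z$, and collapsing that infimum to the single term $A_Y(Z)/S\big(W^Y_{\bullet,\bullet};Z\big)=1/S\big(W^Y_{\bullet,\bullet};Z\big)$ is precisely where one uses that $Z$ is an irreducible curve, hence itself a prime divisor on the normal surface $Y$ with $A_Y(Z)=1$. As written, your justification (``its contribution is bounded below by the bi-graded $S$-invariant of $Z$ itself'') asserts the conclusion rather than proving it; this reduction is carried out in \cite{Fano21} and would need to be reproduced, not merely invoked, for the proposal to count as a proof.
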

\begin{lemma}[{\cite[Lemma 1.4.4]{Fano21}}]\label{lemma144}
Let $X$ be a~Fano variety with at most Kawamata log terminal singularities of dimension $n\geqslant 2$, $Z$ be a~proper irreducible subvariety in $X$. Let $f\colon\widetilde{X}\to X$ be an~arbitrary $G$-equivariant birational morphism,
let $F$ be a~$G$-invariant prime divisor in $\widetilde{X}$ such that $Z\subseteq f(F)$,
and let $\tau(F)$ satisfy \ref{eq:1}. Suppose in addition that $X$ is smooth and $\mathrm{dim}(Z)\geqslant 1$.
Then
$$
\frac{A_X(F)}{S_X(F)}>\frac{n+1}{n}\alpha_{G,Z}(X),
$$
where $$
\alpha_{G,Z}(X)=\mathrm{sup}\left\{\lambda\in\mathbb{Q}\ \left|\ %
\aligned
&\text{the~pair}\ \left(X, \lambda D\right)\ \text{is log canonical at general point of $Z$ for any}\\
&\text{effective $G$-invariant $\mathbb{Q}$-divisor}\ D\ \text{on}\ X\ \text{such that}\ D\sim_{\mathbb{Q}} -K_X\\
\endaligned\right.\right\}.%
$$
\end{lemma}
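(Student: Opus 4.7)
This is the localized, $G$-equivariant form of the Fujita--Odaka comparison $\delta(X)\geqslant\tfrac{n+1}{n}\alpha(X)$, and my plan is to adapt the standard proof of that inequality. The strategy is to construct, for every~$\beta<\alpha_{G,Z}(X)$, a $G$-invariant effective $\mathbb{Q}$-divisor $D\sim_{\mathbb{Q}}-K_X$ with $\mathrm{ord}_F(f^*D)$ large enough that the log canonicity of $(X,\beta D)$ at a general point of~$Z$ forces the required estimate on~$A_X(F)$.

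For each large divisible~$m$, the filtration $\mathcal{F}^{j}V_m=\{s\in V_m:\mathrm{ord}_F(f^*s)\geqslant j\}$ of $V_m=H^0(X,-mK_X)$ is $G$-stable because $F$ is $G$-invariant, and by reductivity of $G$ I pick a $G$-invariant basis of each of its pieces. For a rational parameter~$c\in(0,\tau(F))$, averaging the divisors of the elements of such a basis of the truncated subspace~$\mathcal{F}^{\lceil cm\rceil}V_m$ produces a $G$-invariant $\mathbb{Q}$-divisor~$D_m^{(c)}\sim_{\mathbb{Q}}-K_X$ whose order of vanishing along~$F$ converges, as $m\to\infty$, to the conditional mean
\[
\gamma(c)=\frac{1}{\mathrm{vol}(f^*(-K_X)-cF)}\int_c^{\tau(F)}x\,\bigl(-\mathrm{vol}'(x)\bigr)\,dx
\]
on the Newton--Okounkov body of~$-K_X$. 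Since $Z\subseteq f(F)=C_X(F)$ passes through a general point at which $(X,\beta D_m^{(c)})$ is log canonical for every $\beta<\alpha_{G,Z}(X)$, the log discrepancy inequality yields $A_X(F)\geqslant \beta\cdot\mathrm{ord}_F(f^*D_m^{(c)})$, hence $A_X(F)\geqslant \alpha_{G,Z}(X)\cdot\gamma(c)$ for every admissible~$c$ after letting $m\to\infty$ and $\beta\to\alpha_{G,Z}(X)$.

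To conclude, I establish the convex-geometric estimate $\sup_{c\in(0,\tau(F))}\gamma(c)\geqslant\tfrac{n+1}{n}S_X(F)$, which is the content of the Fujita--Odaka gain and which follows from the concavity of $\mathrm{vol}^{1/n}$ together with the non-degeneracy imposed by $\dim Z\geqslant 1$; substitution into the previous bound gives $A_X(F)\geqslant \tfrac{n+1}{n}\alpha_{G,Z}(X)\cdot S_X(F)$, and the strict inequality in the conclusion follows from a routine $\varepsilon$-perturbation in the cutoff~$c$. The main obstacle is precisely this last comparison: for an \emph{arbitrary} concave volume profile the bound $\sup_c\gamma(c)\geqslant\tfrac{n+1}{n}S_X(F)$ can fail, and one must use the non-degeneracy forced by $\dim Z\geqslant 1$ to rule out the extremal profiles; the $G$-equivariance and the localization to~$Z$ are then handled routinely via reductive averaging.
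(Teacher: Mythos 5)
This lemma is not proved in the paper at all: it is imported verbatim from \cite[Lemma 1.4.4]{Fano21}, so there is no internal argument to compare against and your proposal has to stand on its own. Its skeleton is right (produce $G$-invariant effective $\mathbb{Q}$-divisors $D\sim_{\mathbb{Q}}-K_X$ with $\mathrm{ord}_F(f^*D)$ close to an extremal value, use $Z\subseteq f(F)$ to convert log canonicity at a general point of $Z$ into $A_X(F)\geqslant\beta\,\mathrm{ord}_F(f^*D)$, then compare with $S_X(F)$), and that conversion step is correct since the non-lc locus would contain all of $f(F)\supseteq Z$. But the central analytic input is misidentified. Concavity of $\mathrm{vol}^{1/n}$ gives $S_X(F)\geqslant\tau(F)/(n+1)$ --- the \emph{opposite} direction to what you need. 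The inequality you actually require is Fujita's bound $S_X(F)\leqslant\frac{n}{n+1}\tau(F)$; note that your $\gamma(c)$ machinery collapses to this, because $\gamma(c)\to\tau(F)$ as $c\to\tau(F)^-$, so $\sup_c\gamma(c)=\tau(F)$ and the whole conditional-mean construction is equivalent to simply taking a section of a large multiple of $f^*(-K_X)-(\tau-\varepsilon)F$ and averaging its $G$-orbit. Fujita's bound is proved not from concavity of $\mathrm{vol}^{1/n}$ but from Brunn--Minkowski applied to the slices of the Okounkov body of $f^*(-K_X)$ with respect to a flag containing $F$ (equivalently, concavity of $(-\mathrm{vol}')^{1/(n-1)}$, which caps the barycenter of the measure $-\mathrm{vol}'(x)\,dx$ at $\frac{n}{n+1}\tau$). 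It holds for \emph{every} prime divisor over a Fano variety, so your concern that the comparison ``can fail for an arbitrary concave volume profile'' is misplaced; but as written you have neither stated nor proved the correct inequality. A smaller point: a $G$-stable subspace need not admit a ``$G$-invariant basis''; what one uses is that averaging the resulting divisor over the (here finite) group $G$ preserves $\mathrm{ord}_F$ because $F$ is $G$-invariant.

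The second genuine gap is strictness. With the corrected input one obtains $A_X(F)\geqslant\alpha_{G,Z}(X)\,\tau(F)\geqslant\frac{n+1}{n}\alpha_{G,Z}(X)\,S_X(F)$, and this is where the argument ends unless you can exclude equality. Equality would force $S_X(F)=\frac{n}{n+1}\tau(F)$, i.e.\ the extremal conical volume profile $-\mathrm{vol}'(x)=nVx^{n-1}/\tau^n$, and in that configuration varying the cutoff $c$ gains nothing, so ``a routine $\varepsilon$-perturbation in $c$'' does not produce the strict inequality. This is exactly where the hypotheses that $X$ is smooth and $\dim Z\geqslant 1$ must be used (the conical profile forces the volume loss $V-\mathrm{vol}(f^*(-K_X)-xF)$ to vanish to order $n$ at $x=0$, which is the signature of a zero-dimensional center, incompatible with $\dim C_X(F)\geqslant\dim Z\geqslant 1$), and your proposal contains no argument at this point. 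Since the strict inequality is precisely what the paper uses (to pass from $A_X(F)/S_X(F)\leqslant 1$ to $\alpha_{G,Z}(X)<\frac{3}{4}$), this gap is not cosmetic.
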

\begin{lemma}[{\cite[Corollary A.13]{Fano21}}]\label{CorollaryA13}
Suppose $X=\mathbb{P}^3$ and $B_X\sim_{\mathbb{Q}} -\lambda K_X$
for some rational number $\lambda<\frac{3}{4}$.
Let $Z$ be the~union of one-dimensional components of  $\mathrm{Nklt}(X,B_X)$.
Then $\mathcal{O}_{\mathbb{P}^3}(1)\cdot Z\leqslant 1$.
\end{lemma}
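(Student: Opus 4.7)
I argue by contradiction: assume $d:=\MO_{\DP^3}(1)\cdot Z\geq 2$. The strategy is to find a line $L\subset\DP^3$ meeting at least two points of $Z$ at which $B_X$ has high multiplicity and along which $B_X\cdot L$ is computable, then derive a numerical contradiction.

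\emph{Step 1: pointwise multiplicity bound.} I will show that $\mathrm{mult}_p(B_X)\geq 2$ for a very general point $p$ on each irreducible component of $Z$. Since $p$ is general, $(X, B_X)$ is non-klt at $p$ with $Z$ as the unique minimal non-klt center through $p$. For a general hyperplane $H\ni p$ (in particular $Z\not\subset H$), inversion of adjunction gives that $(H,B_X|_H)$ is non-klt at $p$, and its non-klt center through $p$ is contained in the zero-dimensional scheme $Z\cap H$, so $p$ is an isolated non-klt point on the smooth surface $H$. The classical multiplicity bound for isolated non-klt singularities on smooth surfaces yields $\mathrm{mult}_p(B_X|_H)\geq 2$, hence $\mathrm{mult}_p(B_X)\geq 2$.

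\emph{Step 2: case analysis on how $Z$ sits in $\DP^3$.} If $Z$ spans $\DP^3$, its secant variety is all of $\DP^3$, and since $\Supp(B_X)$ is a proper surface, for generic $p,q\in Z$ the line $L=\overline{pq}$ is not contained in $\Supp(B_X)$, giving
\[
4\lambda=B_X\cdot L\geq \mathrm{mult}_p(B_X)+\mathrm{mult}_q(B_X)\geq 4,
\]
contradicting $\lambda<\tfrac34$. If $Z\subset\Pi$ for some plane $\Pi$ with $\Pi\not\subset\Supp(B_X)$, a generic line $L\subset\Pi$ meets $Z$ in $d$ general points and avoids $\Supp(B_X)$, yielding the same contradiction $4\lambda\geq 2d\geq 4$.

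\emph{Step 3: the remaining case $Z\subset\Pi\subset\Supp(B_X)$.} Write $B_X=a\Pi+B''$ with $a>0$ and $\Pi\not\subset\Supp(B'')$. Since $Z$ is an irreducible component of $\mathrm{Nklt}(X,B_X)$ contained in $\Pi$, the plane $\Pi$ itself cannot be a two-dimensional Nklt component (otherwise $Z\subset\Pi$ would fail to be an irreducible component), forcing $a<1$. Take a generic line $L\subset\Pi$ meeting $Z$ in $d$ general points $p_i$ and with $L\not\subset\Supp(B''|_\Pi)$. Then $\mathrm{mult}_{p_i}(B''|_\Pi)\geq\mathrm{mult}_{p_i}(B'')=\mathrm{mult}_{p_i}(B_X)-a\geq 2-a$, so
\[
4\lambda-a=\deg(B''|_\Pi)=B''|_\Pi\cdot L\geq d(2-a),
\]
which rearranges to $4\lambda\geq 2d+a(1-d)>d+1\geq 3$ (using $a<1$ and $d\geq 2$), again contradicting $\lambda<\tfrac34$. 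The main technical obstacle is Step~1, which requires inversion of adjunction together with the classical multiplicity bound for isolated non-klt centers on smooth surfaces; once that is in place, the case analysis reduces to an elementary Bezout-type computation, with the only subtlety being the need in Case~3 to strip off the planar component of $B_X$ before intersecting.
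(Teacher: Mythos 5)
Your proof has a fatal gap in Step 1, and everything downstream depends on it. There is no ``classical multiplicity bound'' asserting that an isolated non-klt point of a pair $(H,D)$ on a smooth surface satisfies $\mathrm{mult}_p(D)\geq 2$. The correct classical bounds are $\mathrm{mult}_p(D)\geq 1$ when $(H,D)$ is not klt at $p$, and $\mathrm{mult}_p(D)>1$ when it is not log canonical at $p$; both are sharp and neither reaches $2$. A concrete counterexample to your claimed bound: take $D=\tfrac{5}{6}C$ with $C=\{y^2=x^3\}$ cuspidal at $p$. Since $\mathrm{lct}_p(C)=\tfrac{5}{6}$, the pair is lc but not klt at $p$, so $p$ is an isolated point of $\mathrm{Nklt}(H,D)$, yet $\mathrm{mult}_p(D)=\tfrac{5}{3}<2$. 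This is not an artificial example for the lemma at hand: if $B_X=\tfrac{5}{6}S$ with $S\subset\DP^3$ the cone over a cuspidal plane cubic, then $\deg B_X=\tfrac{5}{2}<3$, the line $\ell$ through the vertex and the cusp is a one-dimensional component of $\mathrm{Nklt}(\DP^3,B_X)$, and $\mathrm{mult}_\ell(B_X)=\tfrac{5}{3}<2$; so the conclusion of your Step 1 is simply false for admissible $B_X$ (while the lemma itself holds, since $\deg\ell=1$). With the true bound $\mathrm{mult}_p(B_X)\geq 1$ (or $>1$), your Bezout computations in Steps 2--3 only give $4\lambda\geq 2$, which is no contradiction with $\lambda<\tfrac34$; indeed the numerology $4\lambda<3$ versus a sum of two multiplicities shows that no pointwise multiplicity argument of this shape can work, since each multiplicity can be as small as $1$.

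The statement is quoted in the paper from \cite[Corollary A.13]{Fano21} without proof, so there is nothing internal to compare against; but the standard argument is of a different nature and does not use multiplicities at all. One restricts $B_X$ to a general plane $\Pi$: then $B_X\vert_\Pi\sim_{\DQ}4\lambda H_\Pi$ with $-(K_\Pi+B_X\vert_\Pi)\sim_{\DQ}(3-4\lambda)H_\Pi$ ample, each of the $\deg Z$ points of $Z\cap\Pi$ is an isolated point of $\mathrm{Nklt}(\Pi,B_X\vert_\Pi)$ (for general $\Pi$ these points avoid the two-dimensional components of $\mathrm{Nklt}(\DP^3,B_X)$, and one checks that any one-dimensional component of $\mathrm{Nklt}(\Pi,B_X\vert_\Pi)$ must lie in the restriction of such a two-dimensional component), and the Shokurov--Koll\'ar connectedness principle then forces $\mathrm{Nklt}(\Pi,B_X\vert_\Pi)$ to be connected, whence $\deg Z\leq 1$. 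I would suggest rebuilding your argument around connectedness rather than multiplicities.
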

\begin{lemma}[{\cite[Corollary A.15]{Fano21}}]\label{CorollaryA15}
Suppose that $X$ is a~smooth Fano threefold,  $-K_{X}$ is nef and big, $B_X\sim_{\mathbb{Q}} -\lambda K_X$ for some rational number~\mbox{$\lambda<1$},
and there exists a~surjective morphism with connected fibers \mbox{$\phi\colon X\to \mathbb{P}^1$}.
Set $H=\phi^*(\mathcal{O}_{\mathbb{P}^1}(1))$. Let $Z$ be the~union of one-dimensional components of $\mathrm{Nklt}(X,\lambda B_X)$.
Then $H\cdot Z\leqslant 1$.
\end{lemma}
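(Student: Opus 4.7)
The plan is to use relative Nadel vanishing on $\phi\colon X\to \mathbb{P}^1$ to produce a surjection $\mathcal{O}_{\mathbb{P}^1}\twoheadrightarrow\phi_*\mathcal{O}_N$, where $N\subset X$ is the scheme defined by the multiplier ideal sheaf $\mathcal{J}=\mathcal{J}(X,\lambda B_X)$, and then to extract the bound by computing the generic rank of this pushforward over $\mathbb{P}^1$.

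Because $\lambda B_X\sim_{\mathbb{Q}}-\lambda^2 K_X$ with $\lambda^2<1$, the divisor
\[
-K_X-\lambda B_X\sim_{\mathbb{Q}}-(1-\lambda^2)K_X
\]
is nef and big on $X$, hence also $\phi$-nef and $\phi$-big. Relative Nadel vanishing therefore yields $R^i\phi_*\mathcal{J}=0$ for every $i>0$. Pushing forward the short exact sequence $0\to\mathcal{J}\to\mathcal{O}_X\to\mathcal{O}_N\to 0$, and using $\phi_*\mathcal{O}_X=\mathcal{O}_{\mathbb{P}^1}$ (from the connected-fibre hypothesis), I obtain a surjection $\mathcal{O}_{\mathbb{P}^1}\twoheadrightarrow\phi_*\mathcal{O}_N$, so $\phi_*\mathcal{O}_N$ has generic rank at most $1$.

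To finish, I bound that generic rank below by $H\cdot Z$. Let $X_\eta$ denote the generic fibre of $\phi$; flat base change identifies the generic rank of $\phi_*\mathcal{O}_N$ with $\dim_{K(\mathbb{P}^1)}H^0(X_\eta,\mathcal{O}_{N\cap X_\eta})$. Each one-dimensional component $Z_i$ of $N$ with $d_i:=H\cdot Z_i>0$ is a curve dominating $\mathbb{P}^1$ by a finite map of degree $d_i$, so $Z_i\cap X_\eta$ is a reduced zero-dimensional subscheme of $X_\eta$ of length $d_i$, and these zero-dimensional subschemes are pairwise disjoint closed subschemes of $N\cap X_\eta$. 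Hence $\dim H^0(X_\eta,\mathcal{O}_{N\cap X_\eta})\geq\sum_i d_i=H\cdot Z$, and combining the two estimates gives $H\cdot Z\leq 1$.

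The main technical point is verifying the $\phi$-bigness hypothesis for Nadel vanishing: since $-K_X$ is big on $X$, writing $-K_X\sim_{\mathbb{Q}}A+E$ with $A$ ample and $E$ effective exhibits $-K_X|_F$ as ample-plus-effective on a general fibre $F$, hence big, and combined with nefness this supplies the $\phi$-nef-and-$\phi$-big hypothesis. A secondary subtlety is that $N$ may possess a horizontal divisorial component, which would restrict to a positive-dimensional subscheme of $X_\eta$ and contribute an extra $\geq 1$ to the $H^0$ estimate; this only sharpens the conclusion (in fact forces $H\cdot Z\leq 0$), so it does not affect the final bound. Modulo these checks the argument is essentially formal.
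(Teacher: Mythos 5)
The paper does not prove this lemma --- it is imported verbatim from \cite[Corollary~A.15]{Fano21} --- so there is no internal proof to compare against; your argument is a correct reconstruction of the standard one (relative Nadel vanishing for $\mathcal{J}(X,\lambda B_X)$, the surjection $\mathcal{O}_{\mathbb{P}^1}\twoheadrightarrow\phi_*\mathcal{O}_N$, and counting lengths on the generic fibre), which is essentially how the cited reference argues. The only step worth one more word is why $\dim H^0(\mathcal{O}_{N_\eta})\geqslant\sum_i d_i$: each $Z_{i,\eta}$ is an \emph{isolated} point of $N_\eta$, hence a connected component contributing a direct factor to $H^0$, because $Z_i$ is an irreducible component of the non-klt locus and therefore meets the divisorial part of $N$ and the other curve components in finitely many points, none of which survive to the generic fibre; with that observation your treatment of a possible horizontal divisorial component is also correct.
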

\section{Geometry of Fano Threefolds in Family  \textnumero 3.12}
\label{geometry}
\subsection{Basic properties  of Fano Threefolds in  Family  \textnumero 3.12} \label{6}
\noindent Let $C$ be the smooth twisted cubic curve in $\DP^3$ that is the image of the map $\DP^1\hookrightarrow \DP^3$ given by
$$[x:y]\to[x^3:x^2y:xy^2:y^3]$$
let $L$ be a line in $\DP^3$ that is disjoint from $C$, and let $\pi:X\to \DP^3$ be the blow up of $\DP^3$ along $C$ and $L$. Then   $X$ is {\it a Fano threefold in family 3.12} and all threefolds in this family can be obtained this way. Note that there exists the following commutative diagram:
$$\xymatrix{
&&\DP^1\times \DP^2\ar@/^2pc/[ddrr]^{\pr_2}\ar@/^-2pc/[ddll]_{\pr_1}&&\\
&&X \ar@/^1pc/[drr]^{\sigma}\ar@/^-1pc/[dll]_{\eta}\ar[dl]_{\theta}\ar[dr]^{\phi}\ar[dd]_{\pi}\ar[u]^{\zeta}&&\\
\DP^1&Y\ar[dr]^{\varphi}\ar[l]_{\nu}&&V\ar[r]^{\xi}\ar[dl]_{\vartheta}&\DP^2\\
&&\DP^3\ar@{-->}@/^1.4pc/[ull]\ar@{-->}@/^-1.4pc/[urr]&&
}$$
Where:
\begin{itemize}
    \item $\varphi$ is the blowup of a line $L$,
    \item $\vartheta$ is the blowup of a curve $C$,
    \item $\phi$ is the blowup of a curve $\vartheta^* L$,
    \item $\theta$ is the blowup of a curve $\varphi^* C$,
    \item the left dashed arrow is the linear projection from the line $L$,
    \item the right dashed arrow is given by the linear system of quadrics that contain $C$,
    \item $\xi$ is a $\DP^1$-bundle,
    \item $\nu$ is a $\DP^2$-bundle,
    \item $\sigma$ is a non-standard conic bundle,
    \item $\eta$ is a fibration into the del Pezzo surfaces of degree $6$,
    \item $\zeta$ is the contraction of the proper transforms of the quartic surface in $\DP^3$ that is spanned by the secants of the curve $C$ that intersect $L$,
    \item  $\pr_1$ and $\pr_2$ are projections to the first and the second factors, respectively.
\end{itemize}
Let $H$ be a plane in $\DP^3$, $E_L$ be the exceptional surface of $\pi$ that is mapped to $L$, $E_C$ be the exceptional surface of $\pi$ that is mapped to $C$, $R$ be $\zeta$-exceptional surface. Then $$R\sim_{\DQ} \pi^*(4H)-2E_C-E_L,$$
 and $$-K_X\sim_{\DQ}\pi^*(4H)-E_C-E_L .$$
%Thus, for every subgroup $G\subset\Aut(X)$ we have $\alpha_G(X)\le \frac{2}{3}$, because $R,E_L$ and linear system $\pi^*(H)-E_L$ are all $G$-invariant.
 
%\begin{remark}
% Suppose we have four points on $\DP^1$, namely $p_1$, $p_2$, $p_3$, $p_4$. Note that we can assume their coordinates to be $p_1=[0:1]$, $p_2=[1:0]$, $p_3=[1:a]$, $p_4=[b:1]$ where $a,b\ne 0$. Now take $\lambda\in\DC^*$ such that $\lambda^2=\frac{b}{a}$ and act on $p_1$, $p_2$, $p_3$, $p_4$ by 
 %$$\begin{pmatrix}
 %1 &0\\
 %0 & \lambda
 %\end{pmatrix}$$
 %We obtain points $[0:1]$, $[1:0]$, $[1:\lambda a]$, $[\frac{b}{\lambda}:1]$. Note that by the choice of $\lambda$ we have that $\frac{b}{\lambda}= \lambda a$. So without loss of generality we may assume the intial points to be:
 %$$p_1=[0:1],\text{ }p_2=[1:0],\text{ }p_3=[1:a],\text{ }p_4=[a:1]$$
 %We see that that there is $\DZ/2\DZ\times \DZ/2\DZ$ acting on the set $\{p_1,p_2,p_3,p_4\}$ generated by the involutions:
 %$$\begin{pmatrix}
 %0 &1\\
 %1 & 0
 %\end{pmatrix}\text{ }\text{ and }\text{ }
 %\begin{pmatrix}
 %1 &-a\\
 %a & -1
 %\end{pmatrix}$$
%\end{remark}
\subsection{Construction of $R$}\label{1}
Consider the commutative diagram:
$$\xymatrix{ X\ar[dr]^{\phi}\ar[dd]_{\pi} \ar@/^2pc/[rrdd]^{\sigma}
\\&V\ar[dl]_{\vartheta}\ar[dr]^{\xi}\\
\DP^3\ar@{-->}[rr]&&\DP^2}$$
Where $\xi$ is a $\DP^1$-bundle given by the linear system $|\vartheta^*(2H)-E_C|$, $\vartheta$ is the blowup of $C$ and the dashed arrow is given by the linear system of quadrics containing $C$, $\phi$ is the blowup of $\vartheta^*L$. Denote $\tilde{L}=\vartheta^*L$. What is the image of $\tilde{L}$ in $\DP^2$? We have that 
$$\tilde{L}\cdot(\vartheta^*(2H)-E_C)=2$$
which means that $\xi(\tilde{L})$ is a conic. The preimage of this conic are all the secants of $C$ which intersect $\tilde{L}$.  Therefore, $\pi(R)$ is spanned by secants of $C$ that intersect $L$. Note that the class of the preimage is $$\xi^*(\MO_{\DP^2}(2))=2(\vartheta^*(2H)-E_C)= \vartheta^*(4H)-2E_C.$$
Note that moreover that $\xi(\tilde{L})$ is a smooth conic and $\xi$ is a $\DP^1$-bundle thus the preimage of $\xi(\tilde{L})$ is a smooth surface so it is smooth along $\tilde{L}$ thus the class of $R$ in $\DP^3$ is given by
$$R\sim_{\DQ} \pi^*(4H)-2E_C-E_L.$$
\subsection{$\DZ/2\DZ\times \DZ/2\DZ$-action on $\DP^3$ and Fano threefolds in Family 3.12}
\label{action}
Note that $\Aut(X)\cong \Aut(X,C+L)$. On the other hand, we have 
$$\Aut(\DP^3,C)=\PGL_2(\DC),$$
where $\Aut(\DP^3,C)$ is the group of automorphisms of $\DP^3$ which fix $C$ as a set.  
\subsubsection{Types of threefolds in Family 3.12} \label{types}
We look at the projection from the line $L$ which is disjoint from $C$ to $\DP^1$:
$$\phi_L:\DP^3\xdashrightarrow{} \DP^1,$$
which gives a $3$-cover of $\DP^1$:
$$ \phi_L\vert_C:C\xrightarrow{3:1} \DP^1.$$
Then by Riemann-Hurwitz we have that the degree of the ramification divisor is $4$. The multiplicity in each ramification point is either $2$ or $3$. So we have $3$ options for ramification points: 
\begin{itemize}
    \item there are two ramification points both of multiplicity $3$,\\
    \item there is one ramification point of multiplicity $3$ and two ramification points of multiplicity $2$,\\
    \item there are four ramification points of multiplicity $2$.
\end{itemize}
We see that there are at least two ramification points on $C$. By acting on $C$ by the $\PGL(2,\DC)$ we can make these points to be $p_1=[1:0]$, $p_2=[0:1]$ on $C$. Now we look at the line  $L$. It is the intersection of $2$ planes which are tangent to $C$ at points $p_1$ and $p_2$ (note that these planes are different since the plane intersects the cubic $C$ in three points, so the same plane cannot be tangent to $C$ at two points) so it is given by the equations:
$$L:\begin{cases}x_0=r_1x_1,\\
x_3=r_2x_2.
\end{cases}$$
Now we have $3$ cases:
\\$\circled{1}$ $r_1=r_2=0$ so $L$ is given by the equations:
$$L:\begin{cases}x_0=0,\\
x_3=0.
\end{cases}$$
 Here we have two ramification points of multiplicity $3$. This case was described in \cite{Fano21}. The corresponding threefold $X$ is $K$-polystable in this case.
\\$\circled{2}$ $r_1=0$, $r_2\ne 0$ (which is symmetric to the case $r_1\ne 0$, $r_2= 0$) so $L$ is given by the equations:
$$L:\begin{cases}x_0=0,\\
x_3=r_2x_2.
\end{cases}$$
Using the action of $\DC^*$ by the matrix which fixes $C$:
$$\begin{pmatrix}
1 &0&0&0\\
0 &r_2&0 &0\\
0& 0& r_2^2 &0\\
0& 0&0 & r_2^3
\end{pmatrix}$$
We can assume that $L$ is given by
$$L:\begin{cases}x_0=0,\\
x_3=x_2.
\end{cases}$$
Here we have one ramification point of multiplicity $3$ and two ramification points of multiplicity $2$. This case was described in \cite{Fano21} where it was proved in  that $X$ is not $K$-polystable. 
\\$\circled{3}$ $r_1\ne 0$, $r_2\ne 0$ so $L$ is given by the equations
$$L:\begin{cases}x_0=r_1x_1,\\
x_3=r_2x_2.
\end{cases}$$
Using the action of $\DC^*$ by the matrix which fixes $C$:
$$\begin{pmatrix}
1 &0&0&0\\
0 &\lambda &0 &0\\
0& 0& \lambda^2 &0\\
0& 0&0 & \lambda^3
\end{pmatrix},$$
 where $\lambda$ satisfies $\lambda^2=\frac{r_2}{r_1}$. We can assume that $L$ is given by
$$L:\begin{cases}x_0=rx_1,\\
x_3=rx_2.
\end{cases}$$
Note that:
\begin{itemize}
    \item $r\ne 0$ since otherwise we are in case $\circled{1}$,
    \item $r\ne \pm 1$ since otherwise $L$ intersects $C$ which is prohibited,
    \item $r\ne \pm 3$ since otherwise there exists a plane containing $L$ which is tangent to $C$ with multiplicity $3$ (it is a plane given by $x_3+3x_3+3x_1+x_0=0$ in case $r=-3$ and a plane $-x_3+3x_3-3x_1+x_0=0$ in case $r=3$) so this case is projectively  isomorphic to the case $\circled{2}$.
\end{itemize}
Now the involution on $\DP^3$ given by $[x_0:x_1:x_2:x_3]\to [x_3:x_2:x_1:x_0]$ fixes $C$ an $L$. We can do it for any pair of four ramification points on $C\cong \DP^1$. This gives the action of $\DZ/2\DZ\times\DZ/2\DZ$. More precisely this group is generated by the involutions viewed on $\DP^1$:
$$\begin{pmatrix}
0 & 1\\
1 & 0
\end{pmatrix}\text{ }\text{ and }\text{ }
\begin{pmatrix}
1 & -\frac{r(r^2 - 5 + (r^4 - 10r^2 + 9)^{1/2})}{2(r^2 - 3 +(r^4 - 10r^2 + 9)^{1/2})}\\
\frac{r^2+3+(r^4-10 r^2+9)^{1/2}}{4 r} & -1
\end{pmatrix}
$$
The action on $\DP^3$ is given by the map induced by $[x:y]\to[x^3:x^2y:xy^2:y^2]$.
\subsubsection{$\DZ/2\DZ\times \DZ/2\DZ$-fixed points on $X$}
From now on we assume until the end of this section that we are in case $\circled{3}$ of the previous part and $G=\DZ/2\DZ\times \DZ/2\DZ$. In particular, $\Aut(X)$ is finite.
\begin{lemma}\label{fixedpoints}
There are no $G$-invariant planes in $\DP^3$
\end{lemma}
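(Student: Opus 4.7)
The plan is to reduce the statement to a parity constraint on effective divisors on the twisted cubic $C$. Suppose, for contradiction, that there exists a $G$-invariant plane $\Pi \subset \mathbb{P}^3$. Since $C$ is a twisted cubic of degree $3$ and is not contained in any plane, the scheme-theoretic intersection $\Pi \cap C$ is an effective divisor of degree $3$ on $C \cong \mathbb{P}^1$. Because both $\Pi$ and $C$ are $G$-invariant, this divisor is itself a $G$-invariant effective divisor of degree $3$ on $C$. It therefore suffices to show that no such divisor exists.

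To this end I would analyse the $G$-orbits on $C \cong \mathbb{P}^1$. The explicit matrices displayed in Section \ref{types} realise $G$ as a subgroup of $\mathrm{PGL}_2(\mathbb{C})$ isomorphic to the Klein four-group, each nontrivial element being an involution. The stabiliser in $\mathrm{PGL}_2(\mathbb{C})$ of any point of $\mathbb{P}^1$ lies in a Borel subgroup and is therefore cyclic; since $G$ is not cyclic, no point of $C$ can have full stabiliser $G$. Combined with the fact that each nontrivial involution in $\mathrm{PGL}_2(\mathbb{C})$ fixes exactly two points of $\mathbb{P}^1$, this forces every $G$-orbit on $C$ to have size either $2$ or $4$.

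Consequently, every $G$-invariant effective divisor on $C$ is a non-negative integral combination of orbit sums, each of even degree, and therefore has even degree. Since $\deg(\Pi \cap C) = 3$ is odd, this is a contradiction and no $G$-invariant plane can exist. The only substantive step is the orbit-size dichotomy; this is a direct consequence of the cyclicity of point-stabilisers in $\mathrm{PGL}_2$, which I do not expect to cause any real difficulty. Once in hand, the parity argument closes the proof immediately.
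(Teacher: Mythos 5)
Your proof is correct and takes essentially the same route as the paper: both arguments reduce to the observation that a $G$-invariant effective divisor of odd degree~$3$ on $C\cong\mathbb{P}^1$ would force a $G$-fixed point (equivalently, an orbit of odd size), which the Klein four-group acting faithfully on $\mathbb{P}^1$ cannot have. One small wording slip: the full stabiliser of a point in $\mathrm{PGL}_2(\mathbb{C})$ is a Borel subgroup, which is not itself cyclic --- what you need, and what is true in characteristic zero, is that every \emph{finite} subgroup of a Borel is cyclic, so the stabiliser in $G$ of a point cannot be all of $G$.
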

\begin{proof}
Note that $G\hookrightarrow \Aut(C)$ since $C$ is a spatial curve.  If there exists a  $G$-invariant plane $\Pi$ consider the intersection of $\Pi$ with $C$. There are three points in $\Pi\cap C$ counted with multiplicities. Thus, since the order of $G$ is $4$ then there is a $G$-fixed point on $C\cong \DP^1$, which is a contradiction.  
\end{proof}
\begin{corollary}
There are no $G$-fixed points in $\DP^3$.
\end{corollary}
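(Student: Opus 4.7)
The plan is to deduce the corollary from Lemma \ref{fixedpoints} via a projective-duality argument: I will construct a $G$-equivariant isomorphism $\DP^3 \cong (\DP^3)^*$, so that every $G$-fixed point of $\DP^3$ produces a $G$-invariant plane in $\DP^3$, which is prohibited by the preceding lemma.

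The $G$-action on $\DP^3$ factors through the twisted-cubic identification $\DP^3 = \DP(\mathrm{Sym}^3 \DC^2)$ and the inclusion $G \subset \PGL_2(\DC) = \Aut(C)$. The defining representation $\DC^2$ of $\mathrm{SL}_2(\DC)$ is self-dual via the symplectic form $\wedge^2 \DC^2 \xrightarrow{\sim} \DC$, which provides an $\mathrm{SL}_2(\DC)$-equivariant isomorphism $\DC^2 \xrightarrow{\sim} (\DC^2)^*$. Taking $\mathrm{Sym}^3$ yields an $\mathrm{SL}_2(\DC)$-equivariant isomorphism $\mathrm{Sym}^3 \DC^2 \xrightarrow{\sim} (\mathrm{Sym}^3 \DC^2)^*$, which descends to a $\PGL_2(\DC)$-equivariant, and therefore $G$-equivariant, isomorphism $\DP^3 \xrightarrow{\sim} (\DP^3)^*$.

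A $G$-fixed point $p \in \DP^3$ is a $G$-invariant line in $V := \mathrm{Sym}^3 \DC^2$, and transports under the above isomorphism to a $G$-invariant line in $V^*$, whose annihilator is a $G$-invariant $3$-dimensional subspace of $V$, i.e., a $G$-invariant plane in $\DP^3$. This contradicts Lemma \ref{fixedpoints}, and the corollary follows. The only point that needs verification is the $G$-equivariance of the duality isomorphism, which is immediate from the $\mathrm{SL}_2(\DC)$-invariance of the symplectic form on $\DC^2$; everything else is a formal manipulation of annihilators.
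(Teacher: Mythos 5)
Your argument is correct and is exactly the intended derivation of this corollary from Lemma \ref{fixedpoints}, which the paper leaves implicit: the self-duality of $\mathrm{Sym}^3\DC^2$ as an $\mathrm{SL}_2(\DC)$-module turns a $G$-fixed point into a $G$-invariant plane. The only cosmetic imprecision is that ``$G$-invariant line in $V$'' should really refer to the preimage of $G$ in $\mathrm{SL}_2(\DC)$ acting through $\mathrm{Sym}^3$, but the argument goes through verbatim with that group in place of $G$.
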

\begin{corollary}
The threefold $X$ does not contain $G$-invariant points.
\end{corollary}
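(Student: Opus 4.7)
The plan is to deduce this from the preceding corollary using the equivariance of the blowup morphism $\pi\colon X\to\DP^3$. The key observation is that the group $G\cong\DZ/2\DZ\times\DZ/2\DZ$ was constructed inside $\Aut(\DP^3,C+L)$, so it preserves both $C$ and $L$ setwise in $\DP^3$. Consequently the ideal sheaf of $C\cup L$ is $G$-invariant, the $G$-action lifts canonically to the blowup $X$, and $\pi$ is $G$-equivariant by construction (the same remark is already implicit in the identification $\Aut(X)\cong\Aut(\DP^3,C+L)$ made in Section~\ref{action}).

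First I would unwind the terminology: a $G$-invariant point on $X$ is the same thing as a $G$-fixed point, since the orbit of a single point is either that point itself or a nontrivial $G$-orbit. Suppose for contradiction that such a $p\in X$ exists. Because $\pi$ is $G$-equivariant, its image $q=\pi(p)\in\DP^3$ is a single point with $g\cdot q=g\cdot\pi(p)=\pi(g\cdot p)=\pi(p)=q$ for every $g\in G$, so $q$ is a $G$-fixed point of $\DP^3$. This directly contradicts the immediately preceding corollary, which in turn follows from Lemma~\ref{fixedpoints} on the absence of $G$-invariant planes. Hence no such $p$ exists.

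I do not anticipate a genuine obstacle here: the argument is formal and the substantive work has already been done in Lemma~\ref{fixedpoints}, which uses the embedding $G\hookrightarrow\Aut(C)\cong\PGL_2(\DC)$ and a counting argument on $\Pi\cap C$ to rule out $G$-invariant planes in $\DP^3$. The only additional input needed at this stage is the equivariance of $\pi$, which is automatic because $\pi$ is the blowup of a $G$-invariant closed subscheme of $\DP^3$.
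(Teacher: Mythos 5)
Your argument is correct and is exactly the reasoning the paper leaves implicit: a $G$-fixed point of $X$ would push forward under the $G$-equivariant blowup $\pi$ to a $G$-fixed point of $\DP^3$, contradicting the preceding corollary. No issues.
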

 \subsubsection{$\DZ/2\DZ\times \DZ/2\DZ$-invariant Quadrics Containing $C$} 
\label{2}
Let $\mathcal{M}$ be the linear system of quadrics in $\DP^3$ that contain the curve $C$.
\begin{lemma}\label{lemmasystemofquadrics}
The linear system $\MM$ is $3$-dimensional, it contains exactly $3$ $G$-invariant surfaces, and these surfaces are smooth.
\end{lemma}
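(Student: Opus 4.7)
The plan is in three steps, matching the three assertions of the lemma. First, I would establish $\dim \MM = 3$ via the standard fact that the homogeneous ideal of the twisted cubic is generated in degree two by the three $2\times 2$ minors of the Hankel matrix $\begin{pmatrix}x_0 & x_1 & x_2\\ x_1 & x_2 & x_3\end{pmatrix}$, namely
\[
Q_1 = x_0x_2 - x_1^2,\qquad Q_2 = x_0x_3 - x_1x_2,\qquad Q_3 = x_1x_3 - x_2^2.
\]
These quadrics are linearly independent in $H^0(\DP^3,\MO(2))$, and the codimension count $10-7=3$ (using surjectivity of the restriction $H^0(\DP^3,\MO(2)) \to H^0(C,\MO(6))$) confirms that they span $\MM$.

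Second, I would count the $G$-invariant surfaces using representation theory. The Clebsch--Gordan decomposition $\mathrm{Sym}^2 H^0(\DP^1,\MO(3)) = H^0(\DP^1,\MO(6)) \oplus H^0(\DP^1,\MO(2))$ identifies $\MM$ with $H^0(\DP^1,\MO(2))$, the $3$-dimensional irreducible adjoint representation of $\PGL_2 = \Aut(\DP^3, C)$. Since the three non-identity involutions in $G = \DZ/2\DZ \times \DZ/2\DZ$ are conjugate in $\PGL_2$, they act with a common trace on $\MM$. Writing the character decomposition $\MM\vert_G = n_0 \chi_0 \oplus n_1 \chi_1 \oplus n_2 \chi_2 \oplus n_3 \chi_3$ over the four characters of $G$ and equating traces on the three involutions forces $n_1 = n_2 = n_3$, whence $n_0 + 3n_1 = 3$. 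The $G$-action on $\MM$ is non-trivial (for instance, the involution $[x_0{:}x_1{:}x_2{:}x_3] \mapsto [x_3{:}x_2{:}x_1{:}x_0]$ from Section \ref{action} exchanges $Q_1$ and $Q_3$), so $n_0 = 3$ is ruled out and we must have $n_0 = 0$, $n_1 = n_2 = n_3 = 1$. Thus $\MM\vert_G = \chi_1 \oplus \chi_2 \oplus \chi_3$, and $\DP(\MM) \cong \DP^2$ has exactly three $G$-fixed points, corresponding to exactly three $G$-invariant quadrics through $C$.

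Third, I would verify smoothness via the discriminant of the pencil. Computing the determinant of the symmetric $4\times 4$ matrix associated to $aQ_1 + bQ_2 + cQ_3$ yields (up to a non-zero constant) $(ac - b^2)^2$, so the set of singular quadrics in $\DP(\MM)$ is the smooth conic $\Gamma = \{ac = b^2\}$. The conic $\Gamma$ is $\PGL_2$-invariant, since singularity of a quadric is a $\PGL_2$-invariant condition; moreover the explicit map $C \to \Gamma$ sending $p \in C$ to the unique rank-$3$ cone through $C$ with apex $p$ is a $\PGL_2$-equivariant bijection (and hence an isomorphism of smooth curves). By the Corollary to Lemma \ref{fixedpoints}, $G$ has no fixed points on $C$, so it has none on $\Gamma$. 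Therefore the three $G$-fixed points of $\DP(\MM)$ all lie off $\Gamma$, and the corresponding three $G$-invariant quadrics are smooth. The main computational step is the $4\times 4$ discriminant; the rest reduces to clean representation theory and the equivariance of the invariant conic.
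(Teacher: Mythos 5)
Your proof is correct, but it takes a genuinely different route from the paper's. The paper chooses coordinates in which the two generators of $G$ act by $[x_0:x_1:x_2:x_3]\mapsto[x_3:x_2:x_1:x_0]$ and $[x_0:x_1:x_2:x_3]\mapsto[x_0:-x_1:x_2:-x_3]$, writes down three explicit quadrics through $C$ whose equations are eigenvectors for three pairwise distinct characters of (a lift of) $G$, and concludes from $\dim\MM=3$ that these are the only invariant members; smoothness is then checked by inspection of the explicit equations. You replace both halves by structural arguments: the count of fixed points in $\DP(\MM)$ comes from identifying $\MM$ with the adjoint representation $V_2$ of $\PGL_2$ via Clebsch--Gordan and decomposing its restriction to the Klein four-group as $\chi_1\oplus\chi_2\oplus\chi_3$ (your character-trace argument is sound, since all involutions in $\PGL_2$ are conjugate and $V_2$ is a genuine $\PGL_2$-module, so no projective-representation subtlety arises); and smoothness comes from the observation that the singular members of the net are exactly the rank-$3$ cones with vertex on $C$, so the discriminant is the double of a conic $\Gamma\cong C$ equivariantly, and the absence of $G$-fixed points on $C$ forces the three fixed points of $\DP(\MM)$ off $\Gamma$. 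The paper's computation is more elementary and self-contained but coordinate-dependent; yours explains conceptually both why the count is three and why smoothness is automatic, at the cost of invoking the discriminant computation and the classification of singular quadrics through a twisted cubic (for which you should note that rank $\leqslant 2$ is impossible because $C$ is irreducible and nondegenerate, so the vertex of any singular member must lie on $C$ -- a point worth one sentence in a final write-up). Both arguments establish the lemma.
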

\begin{proof}
%Suppose $\hat{G}$ a lift of $G$ to $\PGL_4(\DC)$. 
Note that all this statement does not depend on the equivariant choice of coordinates. We know that groups isomorphic to $\DZ/2\DZ\times \DZ/2\DZ$ are conjugate in $\PGL_2(\DC)$ so we can choose coordinates such that   the generators of our group will look like:
$$\tau_1:[x:y]\to[y:x],$$
$$\tau_2:[x:y]\to[x:-y].$$
Which gives us the action on $\DP^3$ by:
$$\tau_1:[x_0:x_1:x_2:x_3]\to [x_3:x_2:x_1:x_0],$$
$$\tau_2:[x_0:x_1:x_2:x_3]\to [x_0:-x_1:x_2:-x_3].$$
The linear system  $\MM$ is clearly 3-dimensional. We can provide the equations for $3$ $G$-invariant quadrics containing $C$:
$$Q_1:x_0x_3=x_1x_2,\text{ }\text{ }\text{ }Q_2:x_1^2+x_2^2=x_0x_2+x_1x_3,\text{ }\text{ }\text{ }Q_3:x_1^2-x_2^2=x_0x_2-x_1x_3.$$ 
Note that $(\tau_1,\tau_2)$ acts on the equation of:
\begin{itemize}
    \item $Q_1$ by multiplying it by $(1,-1)$,
     \item $Q_2$ by multiplying it by $(1,\text{ }1)$,
      \item $Q_3$ by multiplying it by $(-1,1)$.
\end{itemize}
Thus since all the pairs are different and $\MM$ is $3$-dimensional there are exactly $3$ $G$-invariant quadrics which we listed above. Note that these quadrics are smooth.
\end{proof}

\noindent Now take a $G$-invariant quadric $Q\in\MM$ and look at the intersection of it with $L$. Note that $L\not \subset Q$ since $L$ does not intersect $C$. $Q\cap L$ cannot be one point since we do not have $G$-fixed points thus The intersection $Q\cap L$ consists of two distinct points. This pair of points does not lie on curves of bidegree $(1,0)$ or $(0,1)$ (since these are the lines on $Q$ and we know that $L\not \subset Q$). Now we see that the blowup $\tilde{Q}\to Q$ at these points is a del Pezzo surface of degree $6$.

\subsubsection{$\DZ/2\DZ\times \DZ/2\DZ$-invariant lines}\label{3}
Let us describe $G$-invariant lines in $\DP^3$. As in proof of Lemma \ref{lemmasystemofquadrics}, we may assume that $G$ is generated by
$$\tau_1:[x:y]\to[y:x],$$
$$\tau_2:[x:y]\to[x:-y].$$
Which gives us the action on $\DP^3$ by:
$$\tau_1:[x_0:x_1:x_2:x_3]\to [x_3:x_2:x_1:x_0],$$
$$\tau_2:[x_0:x_1:x_2:x_3]\to [x_0:-x_1:x_2:-x_3].$$
In this case all $G$-invariant lines are of the form:
$$\begin{cases}\lambda x_0+\mu x_2=0,\\
\lambda x_3+\mu x_1=0,
\end{cases}$$
 where $[\lambda:\mu]\in\DP^1$. All such lines do not intersect each other  and lie on the quadric $Q_4$ given by $x_1x_0=x_2x_3$. We see that $\DP^3$ contains infinitely may $G$-invariant lines and all of them are contained in $Q_4$. Among them there are $3$ lines that intersect $C$. We can describe them explicitly. Let's now look at the intersection of this quadric with $C$. There are exactly $6$ such points:
   \begin{align*}
        P_1&=[0:1]=[0:0:0:1],\nonumber\\
        P_2&=[1:0]=[1:0:0:0],\nonumber\\
        P_3&=[1:1]=[1:1:1:1],\nonumber\\
        P_4&=[1:-1]=[1:-1:1:-1],\nonumber\\
        P_5&=[1:i]=[1:i:-1:-i],\nonumber\\
        P_6&=[1:-i]=[1:-i:-1:i],\nonumber
    \end{align*}
 here in the third column are given the corresponding coordinates on $C\subset\DP^3$. Note that $\tau_1$ exchanges $P_1$ and $P_2$, $\tau_2$ exchanges $P_3$ and $P_4$, $\tau_2$ exchanges $P_5$ and $P_6$ and we obtain three pairs of points which belong to the same line (different one for each pair). We denote these lines $L_{12}$, $L_{34}$, $L_{56}$, where $L_{ij}$ is the line connecting points $P_i$ and $P_j$.
 \begin{lemma}\label{pi(Z)not subset R} Suppose that $Z$ is an irreducible curve on $X$, $\pi(Z)$ is its image on $\DP^3$ and $\pi(Z)$ is a line different from $L$, $L_{12}$, $L_{34}$, $L_{56}$ then $Z\not \subset R$.
 \end{lemma}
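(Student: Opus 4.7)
The plan is to argue by contradiction: suppose $Z\subset R$ and set $\ell:=\pi(Z)$, so that $\ell\subset\pi(R)$, the quartic surface swept by secants of $C$ meeting $L$. I would identify $R$ as a Hirzebruch surface, compute the restriction of $\pi^*H$ to it, and then classify the curves on $R$ that can map birationally to a line in $\DP^3$; the only surviving options will force $\ell\in\{L,L_{12},L_{34},L_{56}\}$.

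First I would use the description from Subsection \ref{1}: $R$ is the strict transform under $\phi$ of the smooth $\DP^1$-bundle $S_V:=\xi^{-1}(\xi(\tilde L))\subset V$ over the smooth conic $\xi(\tilde L)\cong\DP^1$, and since $\tilde L$ is a section of $S_V$, the strict transform is isomorphic to $S_V$. Hence $R$ is a smooth Hirzebruch surface. A direct calculation from $R\sim_{\DQ}\pi^*(4H)-2E_C-E_L$ gives $(E_L\cap R)^2=E_L^2\cdot R=-2$, so $R\cong F_2$, with $s_0:=E_L\cap R$ the unique $(-2)$-section and ruling class $f$ whose members are the strict transforms of the secants of $C$ meeting $L$. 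The intersections $\pi^*H\cdot f = H\cdot(\text{secant})=1$ and $\pi^*H\cdot s_0 = H\cdot L=1$ pin down $\pi^*H|_R = s_0+3f$ in $\mathrm{Pic}(R)$.

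Next, $\pi|_R\colon R\to\pi(R)$ is birational, being an isomorphism away from $R\cap E_C$ and $2$-to-$1$ over the double curve $C\subset\pi(R)$. Since $\ell$ is a line and in particular $\ell\neq C$, the curve $Z$ is not contained in $R\cap E_C$, so $\pi|_Z\colon Z\to\ell$ is birational and $Z\cdot\pi^*H|_R=\deg\ell=1$. Writing $Z\sim a s_0+b f$ with $a,b\geqslant 0$, the equation $(a s_0+bf)(s_0+3f)=a+b=1$ forces $(a,b)\in\{(1,0),(0,1)\}$. If $(a,b)=(1,0)$, then $Z=s_0=E_L\cap R$ and $\ell=L$. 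If $(a,b)=(0,1)$, then $Z$ is a ruling, so $\ell$ is a secant of $C$ meeting $L$; since $\ell$ is $G$-invariant (implicit from the context of Subsection \ref{3}) and $G$ has no fixed points on $C$, the set $\ell\cap C$ is one of the three $G$-orbits $\{P_1,P_2\},\{P_3,P_4\},\{P_5,P_6\}$, yielding $\ell\in\{L_{12},L_{34},L_{56}\}$. Either outcome contradicts the hypothesis $\ell\notin\{L,L_{12},L_{34},L_{56}\}$.

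The main obstacle is establishing the Hirzebruch-surface structure $R\cong F_2$ together with the formula $\pi^*H|_R=s_0+3f$; once these are in hand, classifying curves on $R$ that project birationally to a line reduces to the one-equation problem $a+b=1$. A secondary delicate point, easy to verify, is the birationality of $\pi|_Z$: it rests on $\ell\neq C$ together with the fact that the only non-isomorphism locus of $\pi|_R$ is the bisection $R\cap E_C$.
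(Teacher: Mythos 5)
Your argument is correct, but it takes a genuinely different route from the paper's. The paper's proof is a direct coordinate computation: it writes out the rational map $\DP^3\dashrightarrow\DP^2$ given by the quadrics through $C$, pulls back a general conic to get the explicit quartic equation of $\pi(R)$, solves for the six coefficients of that conic from the condition $L_s=L\subset\pi(R)$, and then checks that $L_t\subset\pi(R)$ forces $t=s$, i.e. $\pi(Z)=L$. (The remark following the lemma gives a second proof, also different from yours, via the anticanonical double cover of the degree-two del Pezzo surface obtained from $Q_4$.) You instead exploit the intrinsic geometry of $R$: the identification $R\cong F_2$ with negative section $s_0=E_L\cap R$ (your computation $E_L^2\cdot R=(-\mathbf{s}+\mathbf{l})\cdot(\mathbf{s}+3\mathbf{l})=-2$ agrees with the restriction classes recorded in Lemma~\ref{lemma:3-12-Z-in-E}), the formula $\pi^*(H)\vert_R=s_0+3f$, and the fact that the effective cone of $F_2$ is generated by $s_0$ and $f$, which reduce the whole problem to $a+b=1$. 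This is cleaner and yields slightly more: the only lines contained in the quartic $\pi(R)$ are $L$ and the secants of $C$ meeting $L$, with no coefficient chase. One caveat, which you rightly flag: as literally stated the lemma fails for non-$G$-invariant lines (any secant of $C$ meeting $L$ other than $L_{12},L_{34},L_{56}$ lies on $\pi(R)$), so $G$-invariance of $\pi(Z)$ must be read into the hypothesis; the paper's own proof makes the same implicit assumption by writing $\pi(Z)=L_t$. Granting that, your last step is sound: a $G$-invariant secant meets $C$ in a length-two $G$-orbit, and these are exactly $\{P_1,P_2\}$, $\{P_3,P_4\}$, $\{P_5,P_6\}$, giving $L_{12}$, $L_{34}$, $L_{56}$.
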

 \begin{proof}
  Suppose $Z$ is contained in $R$.  Consider the following commutative diagram from section \ref{1}:
$$\xymatrix{& R\subset X\ar[dr]^{\phi}\ar[dd]_{\pi} \ar@/^2pc/[rrdd]^{\sigma}
\\&&V\ar[dl]_{\vartheta}\ar[dr]^{\xi}\\
&\pi(R)\subset\DP^3\ar@{-->}[rr]&&\DP^2\supset \xi\circ\phi(R)}$$
Where the bottom dashed arrow is given by the linear system of quadrics containing $C$. Using the equations of quadrics which form the basis of the linear system $\MM$ defined in Section \ref{2} we get the explicit map:
$$\DP^3\xdashrightarrow{} \DP^2\text{ where }[x_0:x_1:x_2:x_3]\xdashrightarrow{}[x_0x_3-x_1x_2:x_1^2-x_0x_2:x_2^2-x_1x_3].$$
We know that $\xi\circ\phi(R)$ is a conic. Let's write its equation in $\DP^2$ with coordinates $[x:y:z]$:
$$a_1x^2 + a_2xy + a_3xz + a_4y^2 + a_5yz + a_6z^2=0.$$
We want to look at the preimage of this equation in $\DP^3$ which will give the equation for $\pi(R)$. Substituting $[x_0x_3-x_1x_2:x_1^2-x_0x_2:x_2^2-x_1x_3]$ into the defining equation of $\xi\circ\phi(R)$ we get:
$$\pi(R): a_1x_3^2x_0^2 - a_2x_2x_3x_0^2 + a_4x_2^2x_0^2 + a_2x_3x_0x_1^2 - 2a_4x_2x_0x_1^2 + a_2x_2^2x_0x_1 + (-2a_1 + a_5)x_2x_3x_0x_1 -$$$$- a_3x_3^2x_0x_1 + a_3x_3x_2^2x_0 - a_5x_2^3x_0 + a_4x_1^4 - a_2x_1^3x_2 - a_5x_3x_1^3 + $$$$+(a_1 + a_5)x_2^2x_1^2 + a_3x_2x_3x_1^2 + a_6x_3^2x_1^2 - a_3x_2^3x_1 - 2a_6x_2^2x_3x_1 + a_6x_2^4=0.$$
Recall from section \ref{3} that all $G$-invariant lines are of the form
$$\begin{cases}\lambda x_0+\mu x_2=0,\\
\lambda x_3+\mu x_1=0,
\end{cases}$$
 where $[\lambda:\mu]\in\DP^1$. Now $L$ is given by
$$L=L_s:\begin{cases}x_0+s x_2=0,\\
x_3+s x_1=0,
\end{cases}$$
 for $s \in \DC$. Note that $s\ne 0$ since otherwise $X$ would have an infinite group of automorphisms. Similarly  $\pi(Z)$ is given by 
$$\pi(Z)=L_t:\begin{cases}x_0+t x_2=0,\\
x_3+t x_1=0,
\end{cases}$$
  for $t \in \DC$. By our assumption $L_s$ is contained in $\pi(R)$. This gives
$$\{a_1 = -1/s,\text{ } a_2 = 0,\text{ } a_3 = 0,\text{ } a_4 = 1, \text{ } a_5 = (s^2 + 1)/s, a_6 = 1\}.$$
So that $\pi(R)$ is given by
$$\pi(R): x_2^2x_0^2s - x_3^2x_0^2 - 2x_2x_0x_1^2s + (s^2 + 3)x_2x_3x_0x_1 + (-s^2 - 1)x_2^3x_0 + sx_1^4 +$$$$+ (-s^2 - 1)x_3x_1^3 + s^2x_1^2x_2^2 + x_3^2x_1^2s - 2x_2^2x_3x_1s + sx_2^4=0.$$
 Similarly since $L_t$ is contained in $\pi(R)$ we get
 
 $$\begin{cases}
 -t^4 - 4ts + (s^2 + 3)t^2 + s^2=0,\\
 s + (-s^2 - 1)t + t^2s=0.
 \end{cases}
 $$
 and the solution to this system is $s=t$ which means that $L_s=L$ and $L_t=\pi(Z)$ coincide  contradicting the assumption on $Z$.
 \end{proof}
 \begin{remark}
Let us use the assumptions and the notations from the proof of Lemma \ref{pi(Z)not subset R}. There is another way to show that the points in $\{b_1,b_2,b_3,b_4\}$ are in general position which as discussed above is equivalent to showing that  $Z\not\subset R$. Suppose the opposite, i.e. that $Z\subset R$. Now  as in the proof of lemma above we take suitable  $t,s\in \DC$ such that $L_s=L$ and $L_t=\pi(Z)$.   Consider the following commutative diagram:
$$\xymatrix{ &X\ar[dl]_{\pi}\ar[dr]^{\sigma}\\
\DP^3\ar@{-->}[rr]&&\DP^2}$$
Where the bottom dashed arrow is given by the linear system $\MM$ of quadrics containing $C$ and a morphism $\sigma|_{\tilde{Q}_4}$ is given by the linear system $|\pi^*(2H)-E_C|$. Now we restrict this diagram to the quadric $Q_4$ which is the quadric which consists of $G$-invariant curves, $\tilde{Q}_4$ is the strict transform of $Q_4$ on $X$ :
$$\xymatrix{ &\tilde{Q}_4\ar[dl]_{\pi|_{\tilde{Q}_4}}\ar[dr]^{\sigma|_{\tilde{Q}_4}}\\
Q_4\ar@{-->}[rr]_{\MM|_{\tilde{Q}_4}}&&\DP^2}$$
The restriction $\pi|_{\tilde{Q}_4}$ is the blow up of the intersection points $C\cap Q_4=\{P_1,...,P_6\}$,
which we described in \ref{3}. Observe that $\tilde{Q}_4$ is not a del Pezzo surface. Indeed, let $\tilde{L}_{ij}$ be in $\{\tilde{L}_{12},\tilde{L}_{34}, \tilde{L}_{56}\}$ (here $L_{ij}$ is  line connecting $P_i$ and $P_j$) then  $\tilde{L}_{12}$, $\tilde{L}_{34}$,  $\tilde{L}_{56}$ where  $\tilde{L}_{ij}$ is the strict transform of the line $L_{ij}$  are (-2)-curves in $\tilde{Q}_4$  since the lines $L_{12}$, $L_{34}$,  $L_{56}$ lie in $Q_4$.
So that they has trivial intersection with $-K_{\tilde{Q}_4}$. In fact, using the coordinates of the points $P_1,...,P_6$ and the equation of $Q_4$ one can show that
the lines $L_{ij}$ are the only secants of the curve $C$ that are contained in $Q_4$. On the other hand, $$-K_{\tilde{Q}_4}\sim (\pi^*(2H)-E_C)\vert_{\tilde{Q}_4},$$
so that $-K_{\tilde{Q}_4}$ is nef and big, and the only curves in $\tilde{Q}_4$ that has trivial intersection with $-K_{\tilde{Q}_4}$ are the three curves $\tilde{L}_{ij}$.
Note that these curves do not intersect the curve $Z$.
 \\ Taking the Stein factorization of the morphism $\sigma\vert_{\tilde{Q}_4}: \tilde{Q}_4\to \DP^2$, we obtain the following commutative diagram:
  $$\xymatrix{\tilde{Q}_4\ar[d]_{\pi|_{\tilde{Q}_4}}\ar[rrrr]^{\text{contraction of (-2) curves}}\ar[drrrr]^{\sigma\vert_{\tilde{Q}_4}} &&&&\bar{Q}_4\ar[d]^{\beta}\\
  Q_4\ar@{-->}[rrrr]&&&&\DP^2}$$
  where $\bar{Q}_4$ is a singular del Pezzo surface of degree $2$ with three singular points of type $A_1$,
$\beta$ is the double cover given by $|-K_{\tilde{Q}_4}|$, and the dasharrow is the rational map given by the restriction of the linear system $\mathcal{M}$ to $Q_4$. Suppose $\bar{L}_s$, $\bar{L}_t$ are the images of  $\tilde{L}_s$, $\tilde{L}_t$ which are the strict transforms of $L_s$ and $L_t$ on $\tilde{Q}_4$.  $\bar{L}_s$ and $\bar{L}_t$ do not pass through singular points, because $L_s$ and $L_t$ are disjoint from the lines $L_{ij}$. Since both $L_t,L_s\subset \pi(R)$ by assumption then by construction of $R$ given in Section \ref{1} we get that $\beta(\bar{L}_s)=\beta(\bar{L}_t)$ is the same conic in $\DP^2$. So we see that:
  $$\bar{L}_s+\bar{L}_t=\beta^*(\MO_{\DP^2}(2))=-2K_{\bar{Q}_4}.$$
  By the adjunction formula we have:
  $$K_{\bar{Q}_4}\cdot \bar{L}_s+\bar{L}_s^2=-2\Rightarrow -K_{\bar{Q}_4}\cdot \bar{L}_s=2.$$
  So we get 
  $$0=\bar{L}_s\cdot \bar{L}_s+\bar{L}_t\cdot \bar{L}_s=-2K_{\bar{Q}_4}\cdot \bar{L}_s=4.$$
  Which gives us a contradiction. Thus $Z\not\subset R $.
 \end{remark}
\subsection{Mori Cone}
  Let $l_L$, $l_C$, $l_R$ be the general fibers of the natural projections $E_L\to L$, $E_C\to C$, $R\to \sigma(R)$.  Observe that we can contract any of two rays $\DR_{\ge 0}[l_L]$, $\DR_{\ge 0}[l_C]$, $\DR_{\ge 0}[l_R]$. Indeed $l_C$ and $l_L$ are contracted by $\pi:X\to \DP^3$, $l_R$ and $l_L$ are contracted by $\sigma:X\to \DP^2$, $l_R$ and $l_C$ are contracted by $\eta:X\to \DP^1$. Thus, these curves generate $3$ extreme rays $\DR_{\ge 0}[l_L]$, $\DR_{\ge 0}[l_C]$, $\DR_{\ge 0}[l_R]$ of the Mori cone $\overline{\mathrm{NE}(X)}$.
 \subsection{Cone of Effective Divisors}
\begin{lemma}
Suppose $S$ is a surface in $X$ then
$$S\sim a(\pi^*(H)-E_L)+b(2\pi^*(H)-E_C)+cR+eE_L+fE_C,$$
for $a,b,c,e,f\in \DZ_{\ge 0}$.
\end{lemma}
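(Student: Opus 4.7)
The plan is to reduce the statement to identifying the nef cone of $X$ and then performing a linear change of basis. First, I would strip off the components supported on the three ``rigid'' surfaces $E_L$, $E_C$, $R$. Set $m_L = \mathrm{ord}_{E_L}(S)$, $m_C = \mathrm{ord}_{E_C}(S)$, $m_R = \mathrm{ord}_R(S)$; these are nonnegative integers. Put $S' = S - m_L E_L - m_C E_C - m_R R$. By construction $S'$ is an effective integral divisor whose support contains none of $E_L$, $E_C$, $R$.

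Next I would verify that $S'$ is nef. Since the preceding subsection shows that $\overline{\mathrm{NE}(X)}$ is simplicial with extremal rays $\DR_{\geq 0}[l_L]$, $\DR_{\geq 0}[l_C]$, $\DR_{\geq 0}[l_R]$, it suffices to check $S' \cdot l_L, S' \cdot l_C, S' \cdot l_R \geq 0$. But $l_L$, $l_C$, $l_R$ each move in a family sweeping out $E_L$, $E_C$, $R$ respectively, and $S'$ has no component supported on these surfaces, so a general member of each family meets $S'$ properly; the three inequalities follow.

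To describe the nef cone, observe that $\pi^*H$, $\pi^*H - E_L$, and $2\pi^*H - E_C$ are nef, being the pullbacks of $\MO_{\DP^3}(1)$, $\MO_{\DP^1}(1)$, $\MO_{\DP^2}(1)$ under the three morphisms $\pi : X \to \DP^3$, $\eta : X \to \DP^1$, $\sigma : X \to \DP^2$ from the diagram. A direct intersection computation, using $E_L \cdot l_L = -1$, $E_C \cdot l_C = -1$, and the fact that $\eta$ and $\sigma$ both contract $l_R$ (since $\zeta$ does and $\eta = \mathrm{pr}_1 \circ \zeta$, $\sigma = \mathrm{pr}_2 \circ \zeta$), shows that each of these three classes has trivial intersection with exactly two of $\{l_L, l_C, l_R\}$ and positive intersection with the third. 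Hence they lie on the three edges of the simplicial nef cone and therefore generate it. Moreover, the change-of-basis matrix from the integral basis $\{\pi^*H, E_L, E_C\}$ of $\mathrm{Pic}(X)$ to $\{\pi^*H, \pi^*H - E_L, 2\pi^*H - E_C\}$ has determinant $\pm 1$, so the monoid of integral nef classes is exactly $\DZ_{\geq 0}\pi^*H + \DZ_{\geq 0}(\pi^*H - E_L) + \DZ_{\geq 0}(2\pi^*H - E_C)$.

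Finally I would write $S' = p\pi^*H + q(\pi^*H - E_L) + r(2\pi^*H - E_C)$ with $p, q, r \in \DZ_{\geq 0}$ and substitute $\pi^*H = (\pi^*H - E_L) + E_L$ to obtain
\[
S \sim (p+q)(\pi^*H - E_L) + r(2\pi^*H - E_C) + m_R R + (p + m_L) E_L + m_C E_C,
\]
the required decomposition. The only nontrivial step is the identification of the nef cone; everything else is bookkeeping. Even this nontrivial step reduces to a short intersection-number calculation, once the three morphisms $\pi$, $\eta$, $\sigma$ and the Mori cone already described are in hand.
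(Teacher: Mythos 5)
Your proof is correct, but it takes a different route from the paper. The paper writes $S\sim d\pi^*(H)-m_LE_L-m_CE_C$ (with $m_L,m_C$ the multiplicities of $\pi(S)$ along $L$ and $C$), pairs $S$ against $l_L$, $l_C$, $l_R$ \emph{and} two auxiliary moving curves (a general line meeting $L$ and a general secant of $C$) to obtain the inequalities $m_L,m_C\geqslant 0$, $d-m_L-2m_C\geqslant 0$, $d-2m_C>0$, and then simply exhibits the explicit solution $a=d-2m_C$, $b=m_C$, $c=f=0$, $e=d-m_L-2m_C$. You instead identify the nef cone as the smooth simplicial cone on $\pi^*H$, $\pi^*H-E_L=\eta^*\MO_{\DP^1}(1)$, $2\pi^*H-E_C=\sigma^*\MO_{\DP^2}(1)$, use unimodularity of the change of basis to land in the monoid they generate, and convert; amusingly, unwinding your substitution returns exactly the paper's coefficients, so the two arguments differ only in how non-negativity is certified. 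The one step you should tighten is the appeal to the Mori cone being \emph{simplicial}: the paper's subsection only asserts that $l_L$, $l_C$, $l_R$ span three extremal rays, not that they span the whole cone, and your nefness check for $S'$ (and the duality determining the edges of $\mathrm{Nef}(X)$) needs the stronger statement. This is easily supplied without circularity: since your three pullback classes are nef and their common dual cone in $N_1(X)_{\DR}$ is precisely the simplicial cone spanned by $[l_L]$, $[l_C]$, $[l_R]$ (a two-line computation in the basis dual to $\pi^*H,E_L,E_C$), the inclusion $\overline{\mathrm{NE}}(X)\subseteq\DR_{\geqslant0}[l_L]+\DR_{\geqslant0}[l_C]+\DR_{\geqslant0}[l_R]$ is automatic, and equality follows. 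With that sentence added, your argument is complete and arguably more structural than the paper's, at the cost of invoking the nef-cone description rather than only elementary intersection numbers.
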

\begin{proof}
Suppose $\pi(S)\subset \DP^3$ is the surface of degree $d$ in $\DP^3$. Then we have
$$S\sim d\pi^*(H)-m_LE_L-m_CE_C,$$
where $m_L$ is the multiplicity of $\pi(S)$ in $L$, $m_C$ is the multiplicity of $\pi(S)$ in $C$. Suppose that $S\ne E_C$, $S\ne E_L$ and $S\ne R$ for all $n$. Now let's intersect $S$ with three extreme rays $l_L$, $l_C$, $l_R$ corresponding to $L$, $C$, $R$:
\\\\\begin{minipage}{0.3\textwidth}
\begin{itemize}
    \item $\pi^*(H)\cdot l_C=0$,
    \item $\pi^*(H)\cdot l_L=0$,
    \item $\pi^*(H)\cdot l_R=1$,
\end{itemize}
\end{minipage}
\hfill
\begin{minipage}{0.3\textwidth}
\begin{itemize}
    \item $E_L\cdot l_C=0$,
    \item $E_L\cdot l_L=-1$,
    \item $E_L\cdot l_R=1$,
\end{itemize}
\end{minipage}
\hfill
\begin{minipage}{0.3\textwidth}
\begin{itemize}
    \item $E_C\cdot l_C=-1$,
    \item $E_C\cdot l_L=0$,
    \item $E_C\cdot l_R=2$.
\end{itemize}
\end{minipage}
\\So we have that:
$$S\cdot l_C=m_C\ge 0,\text{ }\text{ }\text{ }S\cdot l_L=m_L\ge 0,\text{ }\text{ }\text{ }S\cdot l_R=d-m_L-2m_C\ge 0.$$
Moreover if $l_1$ is the general line intersecting $L$, $l_2$ is the general secant of $C$ then we get strict inequalities:
$$S\cdot l_1=d-m_L>0,\text{ }\text{ }\text{ }S\cdot l_2=d-2m_C>0$$
Now we want to find the integer positive solutions for:
$$d\pi^*(H)-m_LE_L-m_CE_C=a(\pi^*(H)-E_L)+b(2\pi^*(H)-E_C)+cR+eE_L+fE_C.$$
Comparing the coefficients we get:
$$d = a + 2b + 4c,\text{ }\text{ }\text{ } m_C = b + 2c - f,\text{ }\text{ }\text{ } m_L = a + c - e.$$
The non-negative solution to this system can be given by
$$\begin{cases}
a = -2m_C + d,\\
b = m_C,\\
c = 0,\\
e = -m_L - 2m_C + d,\\
f = 0.
\end{cases}$$
Thus, the cone of effective divisors over $\DZ$ is generated by $\pi^*(H)-E_L$, $2\pi^*(H)-E_C$, $R$, $E_L$, $E_C$.
\end{proof}

\begin{corollary}\label{4}
Cone of effective divisors of $X$ is generated over $\DQ$ by $\pi^*(H)-E_L$, $R$, $E_L$, $E_C$. More precisely,  suppose $S$ is a surface in $X$ then
$$S\sim_{\DQ} a(\pi^*(H)-E_L)+cR+eE_L+fE_C,$$
for unique $a,c,e,f\in \DQ_{\ge 0}$.
\end{corollary}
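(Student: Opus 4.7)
My plan is to deduce the corollary directly from the preceding lemma, by observing that among the five $\DZ_{\ge 0}$-generators given there, the class $2\pi^*(H)-E_C$ becomes redundant as soon as we allow $\DQ$-coefficients.

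The identity I would exploit is
$$
2\pi^*(H)-E_C \ \sim_{\DQ} \ \tfrac{1}{2}R + \tfrac{1}{2}E_L,
$$
which is immediate from the class relation $R\sim_{\DQ}\pi^*(4H)-2E_C-E_L$ recorded in Section~\ref{1}: rearranging gives $2\pi^*(H)-E_C = \tfrac{1}{2}(R + E_L)$ in $N^1(X)_{\DQ}$, and both coefficients on the right are non-negative rationals. Thus $2\pi^*(H)-E_C$ already lies in the $\DQ_{\ge 0}$-cone spanned by $R$ and $E_L$, two of the four proposed generators.

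Given this, I would finish as follows. For an arbitrary effective surface $S$ in $X$, the previous lemma supplies a $\DZ_{\ge 0}$-decomposition
$$
S \sim a(\pi^*(H)-E_L) + b(2\pi^*(H)-E_C) + cR + eE_L + fE_C,
$$
and substituting the key identity for the middle term produces
$$
S \sim_{\DQ} a(\pi^*(H)-E_L) + \Bigl(c+\tfrac{b}{2}\Bigr)R + \Bigl(e+\tfrac{b}{2}\Bigr)E_L + fE_C,
$$
with all four coefficients in $\DQ_{\ge 0}$. This shows that the effective cone is generated over $\DQ$ by the four classes $\pi^*(H)-E_L$, $R$, $E_L$, $E_C$.

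For the coefficients themselves, I would use the explicit solution from the previous lemma's proof, namely $a=d-2m_C$, $b=m_C$, $c=f=0$, $e=d-m_L-2m_C$, where $d=\deg\pi(S)$, $m_L=\mathrm{mult}_L\pi(S)$, $m_C=\mathrm{mult}_C\pi(S)$. After the substitution above this produces the canonical values
$$
a=d-2m_C,\qquad c=\tfrac{m_C}{2},\qquad e=d-m_L-\tfrac{3m_C}{2},\qquad f=0,
$$
each determined by the geometric data of $S$. The argument has no real obstacle — the only conceptual step is spotting the identity $2\pi^*(H)-E_C\sim_{\DQ}\tfrac{1}{2}R+\tfrac{1}{2}E_L$; everything else is direct substitution into the formula from the preceding lemma.
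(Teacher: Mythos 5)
Your argument is correct and is essentially the derivation the paper intends: the corollary is stated without separate proof immediately after the lemma, and the only missing step is precisely your identity $2\pi^*(H)-E_C\sim_{\DQ}\tfrac{1}{2}R+\tfrac{1}{2}E_L$, which follows at once from $R\sim_{\DQ}\pi^*(4H)-2E_C-E_L$. The one caveat is that neither you nor the paper actually justifies the word \emph{unique}: the four generators satisfy the relation $R+2E_C\sim_{\DQ}4(\pi^*(H)-E_L)+3E_L$ with non-negative coefficients on both sides, so the cone is not simplicial and the coefficients of a given class need not be unique — but for the generation statement, which is what is used later in the paper, your proof is complete.
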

%\begin{proof}
%We have that $R\sim_{\DQ}\pi^*(4H)-2E_C-E_L$. Thus, $$\pi^*(2H)-E_C\sim_{\DQ}\frac{1}{2}(R+E_L),$$
%and we can exclude $\pi^*(2H)-E_C$ from the set of generators of the cone of effective divisors over $\DQ$ and the cone of effective divisors over $\DQ$ is generated by $\pi^*(H)-E_L$, $R$, $E_L$, $E_C$.
%\end{proof}
 
\section{Proof of the Main Theorem}
\noindent Let  $C$ be a twisted cubic in $\DP^3$ that is the image of the map $\DP^1\hookrightarrow \DP^3$ given by
$$[x:y]\to[x^3:x^2y:xy^2:y^3],$$
and $L$ be a line in $\DP^3$ that is disjoint from $C$ given by 
$$L:\begin{cases}x_0=rx_1,\\
x_3=rx_2
\end{cases}$$
where $r\ne 0$, $r\ne \pm 1$, $r\ne \pm 3$ in the coordinates presented in section \ref{types}.  $X$ is a smooth Fano 3-folds in Family 3.12 obtained by blowing up $\DP^3$ in $C$ and $L$, and $G$ is the subgroup in $\Aut(X)$ such that $G\cong \DZ/2\DZ\times\DZ/2\DZ$ described in Section \ref{6}. \\Suppose $X$ is not $K$-polystable.
By \mbox{Theorem~\ref{theorem:G-K-polystability}}, there exists a~$G$-invariant prime divisor $F$ over $X$
such that $\beta(F)=A_X(F)-S_X(F)\leqslant 0$. Let us seek for a~contradiction.
Let $Z=C_X(F)$. Then $Z$ is not a~point by Corollary~\ref{fixedpoints}, and $Z$ is not a~surface by Theorem~\ref{theorem:divisorially-stable},
so that $Z$ is a~$G$-invariant irreducible curve.

\begin{lemma}\label{lemma:3-12-Z-lines}
Suppose that  $\pi(Z)\ne L$ then $\pi(Z)$ is not one of the $G$-invariant lines which does not intersect $C$.
\end{lemma}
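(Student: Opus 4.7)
Suppose for contradiction that $\pi(Z)$ is a $G$-invariant line with $\pi(Z)\ne L$ and $\pi(Z)\cap C=\emptyset$. By Section \ref{3}, $\pi(Z)\notin\{L_{12},L_{34},L_{56}\}$, and $\pi(Z)$ is skew to $L$.

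The plan is to apply Theorem \ref{corollary:Kento-formula-Fano-threefold-surface-curve} with $Y$ the strict transform of a general plane $\Pi\subset\DP^3$ containing $\pi(Z)$. Such a $\Pi$ meets $L$ transversely at one point $p_L\notin\pi(Z)$ and meets $C$ transversely at three distinct points $q_1,q_2,q_3\notin\pi(Z)$; for general $\Pi$ no three of the four points $p_L,q_1,q_2,q_3$ are collinear (as no three points of a twisted cubic lie on a line). Hence $Y$ is a smooth del Pezzo surface of degree $5$, $Y\sim\pi^*H$ on $X$, and $Z$ is the strict transform of a line in $\Pi$ avoiding the four centers, so $Z\sim h$ on $Y$, where $h$ is the pullback of the hyperplane class from $\Pi\cong\DP^2$.

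Using the intersection numbers with the extremal generators $l_L,l_C,l_R$ of $\overline{\mathrm{NE}(X)}$ provided by the Mori cone description, together with Corollary \ref{4}, one verifies that the pseudo-effective threshold is $\tau(Y)=3/2$ and the Zariski decomposition of $-K_X-uY=(4-u)\pi^*H-E_C-E_L$ is
\begin{align*}
P(u)&=(4-u)\pi^*H-E_C-E_L, & N(u)&=0, && u\in[0,1],\\
P(u)&=(8-5u)\pi^*H+(2u-3)E_C+(u-2)E_L, & N(u)&=(u-1)R, && u\in[1,3/2].
\end{align*}
By Lemma \ref{pi(Z)not subset R}, $Z\not\subset R$, so $\mathrm{ord}_Z(N(u)|_Y)\equiv 0$ and the first integral in Theorem \ref{corollary:Kento-formula-Fano-threefold-surface-curve} vanishes; moreover Theorem \ref{theorem:divisorially-stable} gives $S_X(Y)<1$ automatically.

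It then remains to show $S(W^Y_{\bullet,\bullet};Z)<1$. Restricting via $\pi^*H|_Y=h$, $E_L|_Y=e_L$, $E_C|_Y=e_1+e_2+e_3$, I would write $P(u)|_Y-vZ$ explicitly as a class on the del Pezzo $Y$ and perform the Zariski decomposition on $Y$ as $v$ grows, tracking which $(-1)$-curves among $e_L$, $e_i$, and the strict transforms of lines $h-e_i-e_j$, $h-e_L-e_i$ enter the negative part at each transition. The resulting double integral should give $S(W^Y_{\bullet,\bullet};Z)<1$, so Theorem \ref{corollary:Kento-formula-Fano-threefold-surface-curve} forces $A_X(F)/S_X(F)>1$, contradicting $\beta(F)\le 0$. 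The main technical obstacle is this inner Zariski-decomposition-and-integration on the del Pezzo surface $Y$, which involves several regime changes in $v$ (distinct for the two ranges $u\in[0,1]$ and $u\in[1,3/2]$) and requires checking that the generic choice of $\Pi$ is sufficient to avoid degenerate intersection configurations of $(-1)$-curves with $Z$.
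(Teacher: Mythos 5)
Your strategy coincides with the paper's: pass to the strict transform $S\sim\pi^*H$ of a general plane through $\pi(Z)$, which is a del Pezzo surface of degree $5$, get $S_X(S)<1$ from Theorem~\ref{theorem:divisorially-stable}, and bound $S(W^{S}_{\bullet,\bullet};Z)$ via Theorem~\ref{corollary:Kento-formula-Fano-threefold-surface-curve}; the Zariski decomposition of $-K_X-uS$ you wrote down (with $\tau=3/2$ and $N(u)=(u-1)R$ on $[1,3/2]$) is exactly the one used there. Two things are missing. First, your general-position claim is under-justified: the parenthetical ``no three points of a twisted cubic lie on a line'' only excludes $q_1,q_2,q_3$ collinear. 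The dangerous configuration is $p_L$ collinear with two of the $q_i$, i.e.\ the plane $\Pi$ containing a secant of $C$ that meets $L$. The paper rules this out by noting that if it occurred for the \emph{general} plane of the pencil through $\pi(Z)$, then $\pi(Z)$ would meet a general secant of $C$ lying in $\pi(R)$, forcing $Z\subset R$ and contradicting Lemma~\ref{pi(Z)not subset R} --- the very lemma you invoke, but only for $\mathrm{ord}_Z(N(u)|_Y)=0$. The fix is therefore at hand, but it has to be stated.

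Second, and more substantially, the lemma is a numerical assertion and you never produce the number: ``the resulting double integral should give $S(W^Y_{\bullet,\bullet};Z)<1$'' is precisely the step that constitutes the proof. The paper carries out the regime analysis you outline --- nef threshold $v\le 2-u$ for $u\in[0,1]$; for $u\in[1,3/2]$ nef threshold $v\le 3-2u$, after which the three curves $L_{1i}$ (lines through the $L$-point and a $C$-point) enter the negative part and the pseudo-effective threshold becomes $v\le\tfrac{5-3u}{2}$ for $u\in[1,7/5]$ and $v\le 6-4u$ for $u\in[7/5,3/2]$ --- and obtains $S(W^{S}_{\bullet,\bullet};Z)=\tfrac{753}{1120}<1$. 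As written, your argument is a correct plan with the decisive computation left unexecuted.
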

\begin{proof}
Let's take a $G$-invariant line $\pi(Z)$ that does not intersect $C$ and consider a plane $H$ which contains this line. It intersects a line $L$ in one point and a twisted cubic $C$ at three points. Let $S$ be the proper transform of $H$ on $X$. In this case we have that the induced map $\pi|_S:S\to H$ is the blowup of a plane $H$ in $4$ points $b_1=H\cap L$, $b_2,b_3,b_4=H\cap C$. We now need to check that these points are in general position to conclude that $S$ is a del Pezzo surface of degree $5$.

To prove that  we need to show that the points in $\{b_1,b_2,b_3,b_4\}$ are in general position which means that no three of them belong to the same line. Note that $b_2,b_3,b_4=H\cap C$ does not belong to the same line, because $C$ is an intersection of quadrics. So the only option is that $b_1$ and two points from the set $\{b_2,b_3,b_4\}$ belongs to the same line. Suppose $H$ is a general plane and $b_1$ and $2$ points among $\{b_2, b_3, b_4\}$ are contained in one line $\ell$. From Section \ref{1} we know that $\pi(R)$ is spanned by secants of $C$ that intersect $L$, so $H$ contains such secant $\ell$. Moreover $\pi(Z)$ intersects $\ell$, so we see that $\pi(Z)$ intersects a general secant of $C$ that is contained in $\pi(R)$. Then  $Z\subset R$ which contradicts Lemma \ref{pi(Z)not subset R}.   So we can choose the hyperplane $H$ in such a way that the points in $\{b_1,b_2,b_3,b_4\}$ are in general position.
\\Thus,  $S$ is a del Pezzo surface of degree $5$ with the exceptional divisors $E_1$, $E_2$, $E_3$, $E_4$ corresponding to points $b_1,b_2,b_3,b_4$, $L_{ij}$ are the preimages of lines connecting $b_i$ and $b_j$ for $i\in \{1,...,4\}$. Recall that $E_1$, $E_2$, $E_3$, $E_4$ and $L_{ij}$ generate the Mori Cone $\overline{\mathrm{NE}(S)}$.   We have that 
 $$-K_X\sim \pi^*(4H)-E_C-E_L,$$
 $$R\sim \pi^*(4H)-2E_C-E_L,$$
and moreover
$$\pi^*(H)|_S\sim S|_S\sim Z,\text{ }\text{ }\text{ }E_L|_S\sim E_1,\text{ }\text{ }\text{ }E_C|_S\sim E_2+E_3+E_4.$$  
\noindent 
By Theorem~\ref{theorem:divisorially-stable}, we have~\mbox{$S_X(S)<1$}.
Thus, we conclude that $S(W_{\bullet,\bullet}^{S};Z)\geqslant 1$ by  Corollary~\ref{corollary:Kento-formula-Fano-threefold-surface-curve}.
Let us compute $S(W_{\bullet,\bullet}^{S};Z)$.
Take $u\in\mathbb{R}_{\geqslant 0}$. Observe that $$-K_X-uS\sim(1-u/3)R+u/3(H-E_L)+(1-2u/3)E_C,$$
which implies that $-K_X-uS$ is pseudo-effective if and only if $u\leqslant\frac{3}{2}$.
Let $P(u)=P(-K_X-uS)$ be a positive part of Zariski decomposition and $N(u)=N(-K_X-uS)$  be a negative part of Zariski decomposition. Here we use the notations introduced in Theorem \ref{corollary:Kento-formula-Fano-threefold-surface-curve}.
$$
P(u)=
\left\{\aligned
&-K_X-uS\ \text{if $0\leqslant u\leqslant 1$}, \\
&-K_X-uS-(u-1)R\ \text{if $1\leqslant u\leqslant\frac{3}{2}$},
\endaligned
\right.
$$
and $$
N(u)=
\left\{\aligned
&0\ \text{if $0\leqslant u\leqslant 1$}, \\
&(u-1)R\ \text{if $1\leqslant u\leqslant\frac{3}{2}$}.
\endaligned
\right.
$$
Then take any $v\in\mathbb{R}_{\geqslant 0}$. Suppose $P(u,v)$ is a positive part of the Zariski decomposition of $(-K_X-uS)|_S-vZ$,  $N(u,v)$ is a negative part of the Zariski decomposition of $(-K_X-uS)|_S-vZ$.
\\If $u\in[0,1]$, we have $$P(u)\vert_{S}-vZ\sim (-K_X-uS)|_S-vZ= (4H-E_C-E_L-uS)|_S-vZ=$$
$$= 4Z-(E_2+E_3+E_4)-E_1-uZ-vZ=(4-u-v)Z-(E_1+E_2+E_3+E_4).$$
For $u\in[0,1]$ we find $v$ such that the divisor $P(u)\vert_{S}-vZ$ is nef. We have that:
\begin{itemize}
    \item $P(u,v)\cdot Z=4-u-v$,
\item $P(u,v)\cdot E_i=1,\text{ for }i\in\{1,...,4\}$,
\item $P(u,v)\cdot L_{ij}=2-u-v,\text{ for }i,j\in\{1,...,4\}$.
\end{itemize}
To check when the divisor is nef we should choose the strongest inequality from the following system:
$$\begin{cases}
4-u-v\ge 0,\\
2-u-v \ge 0,
\end{cases}\Rightarrow v\le 2-u.$$
Thus for $v\le 2-u$ we have that the divisor $P(u)|_S-vZ$ is nef. Note that for $v=2-u$ we have that $P(u,2-u)^2=0$ so $P(u)|_S-vZ$ is pseudo-effective until $v$ satisfies $v\le 2-u$. We see that the Zariski decomposition for $u\in[0,1]$, $v\le 2-u$ is given by
$$P(u,v)=(4-u-v)Z-(E_1+E_2+E_3+E_4),\text{ }\text{ }\text{ }N(u,v)=0.$$
%$$\mathrm{vol}(P(u)|_S-vZ)^2=((4-u-v)Z-(E_1+E_2+E_3+E_4))^2_{v=2-u}=$$$$=(2Z-(E_1+E_2+E_3+E_4))^2=4-4=0$$
%So we computed the Zariski decomposition in this case, i.e. we get that for $0\le v\le 2-u$:
%$$P(u)|_S-vZ=P_0(u,v)+N_0(u,v)=((4-u-v)Z-(E_1+E_2+E_3+E_4))+0$$
\\For $u\in[1,3/2]$  and $v\in\mathbb{R}_{\geqslant 0}$ we have that:
$$P(u)\vert_{S}-vZ\sim (-K_X-uS-(u-1)R)|_S-vZ=$$
$$=(4H-E_C-E_L-uS-(u-1)(4H-2E_C-E_L))|_S-vZ=$$
$$=(8-5u-v)Z+(2u-3)(E_2+E_3+E_4)+(u-2)E_1.$$
We have that:
\begin{itemize}
\item $P(u,v)\cdot Z=8-5u-v$,
\item $P(u,v)\cdot E_1=2-u$,
\item $P(u,v)\cdot E_i=3-2u$ for $i\in\{2,3,4\}$,
\item $P(u,v)\cdot L_{1i}=3-2u-v$ for $L_{1i}\in\{L_{12},L_{13},L_{14}\}$,
\item $P(u,v)\cdot L_{ij}=3-2u-v$ for $L_{ij}\in\{L_{23},L_{34},L_{24}\}$.
\end{itemize}
To check when the divisor is nef we should choose the strongest inequality from the following system:
$$\begin{cases}
8-5u-v\ge 0,\\
3-2u-v\ge0,\\
2-u-v\ge 0,
\end{cases}\Rightarrow v\le 3-2u.$$
Thus, for $v\le 3-2u$ we have that the divisor $P(u)|_S-vZ$ is nef. We see that the Zariski decomposition for $u\in[1,3/2]$ $v\le 3-2u$ is given by
$$P(u,v)=(8-5u-v)Z+(2u-3)(E_2+E_3+E_4)+(u-2)E_1,\text{ }\text{ }\text{ }N(u,v)=0.$$
Suppose $v\ge 3-2u$. Let us find the Zariski decomposition of $P(u)|_S-vZ$.  $P(u,v)$ is given by
$$P(u,v)=(8-5u-v)Z+(2u-3)(E_2+E_3+E_4)+(u-2)E_1-aL_{12}-bL_{13}-cL_{14},$$
for some $a,b,c$. Note that we should have $P(u,v)\cdot L_{1i}=0$ for $i\in\{2,3,4\}$ thus $a=b=c =-(3-2u-v)$. So we obtain that:
$$P(u,v)=(8-5u-v)Z+(2u-3)(E_2+E_3+E_4)+(u-2)E_1+(3-2u-v)(L_{12}+L_{13}+L_{14}),$$
$$N(u,v)=-(3-2u-v)(L_{12}+L_{13}+L_{14}).$$
We have that:
\begin{itemize}
    \item $P(u,v)\cdot Z=-11u-4v+17$,
\item $P(u,v)\cdot E_1=-7u-3v+11$,
\item $P(u,v)\cdot E_i=-v-4u+6$ for $i\in\{2,3,4\}$,
\item $P(u,v)\cdot L_{1i}=0$ for $L_{1i}\in\{L_{12},L_{13},L_{14}\}$,
\item $P(u,v)\cdot L_{ij}=5-3u-2v$ for $L_{ij}\in\{L_{23},L_{24},L_{34}\}$.
\end{itemize}
To check when the divisor is pseudo-effective we should choose the strongest inequality from the following system:
$$\begin{cases}-11u-4v+17\ge 0,
\\-7u-3v+11\ge 0,\\
-v-4u+6\ge 0,\\
5-3u-2v\ge 0,
\end{cases}\Rightarrow
\begin{cases}
v\le \frac{-11u+17}{4},\\
v\le \frac{-7u+11}{3},\\
v\le 6-4u,\\
v\le \frac{-3u+5}{2}.
\end{cases}$$
So for $u\in[1,7/5]$ we get $v\le \frac{-3u+5}{2}$ and for $u\in[7/5,3/2]$ we get $v\le 6-4u$. Note that $$P(u,v)^2=2(3u - 5 + 2v)(4u - 6 + v).$$ Thus, for $v= \frac{-3u+5}{2}$ or $v = 6-4u$ we have that $P(u,v)^2=0$. Note that for $v=2-u$ we have that $P(u,v)^2=0$ so $P(u)|_S-vZ$ is pseudo-effective until $v$ is in these intervals.
The Corollary~\ref{corollary:Kento-formula-Fano-threefold-surface-curve} gives us
$$\hspace*{-0.5cm}
S\big(W^S_{\bullet,\bullet};Z\big)=\frac{3}{(-K_X)^3}\int_0^{3/2}\big(P(u)^{2}\cdot S\big)\cdot\mathrm{ord}_{Z}\Big(N(u)\big\vert_{S}\Big)du+\frac{3}{(-K_X)^3}\int_0^{3/2}\int_0^\infty \mathrm{vol}\big(P(u)\big\vert_{S}-vZ\big)dvdu.
$$
Note that $\mathrm{ord}_{Z}\Big(N(u)\big\vert_{S}\Big)=0$ because $Z\not\subset R$. So we are only left with the second part of the integral which equals:
$$S\big(W^S_{\bullet,\bullet};Z\big)=\frac{3}{28}\int_0^1\int_0^{2-u}\big((4-u-v)Z-(E_1+E_2+E_3+E_4)\big)^2dvdu+$$
$$+\frac{3}{28}\int_1^{3/2}\int_0^{2u-3}\big((8-5u-v)Z+(2u-3)(E_2+E_3+E_4)+(u-2)E_1\big)^2dvdu+$$
$$\hspace*{-0.5cm}+\frac{3}{28}\int_1^{7/5}\int_{3-2u}^{\frac{5-3u}{2}}\big((8-5u-v)Z+(2u-3)(E_2+E_3+E_4)+(u-2)E_1)+(3-2u-v)(L_{12}+L_{13}+L_{14})\big)^2dvdu+$$
$$\hspace*{-0.5cm}+\frac{3}{28}\int_{7/5}^{3/2}\int_{3-2u}^{6-4u}\big((8-5u-v)Z+(2u-3)(E_2+E_3+E_4)+(u-2)E_1)+(3-2u-v)(L_{12}+L_{13}+L_{14})\big)^2dvdu=$$
$$=\frac{3}{28}\int_0^1\int_0^{2-u}\big((4-u-v)^2-4\big)dvdu+\frac{3}{28}\int_1^{3/2}\int_0^{2u-3}\big(12u^2 + 10uv + v^2 - 40u - 16v + 33\big)dvdu+$$
$$+\frac{3}{28}\int_1^{7/5}\int_{3-2u}^{\frac{5-3u}{2}}\big(24u^2 + 22uv + 4v^2 - 76u - 34v + 60\big)dvdu+$$
$$+\frac{3}{28}\int_1^{7/5}\int_{3-2u}^{6-4u}\big(24u^2 + 22uv + 4v^2 - 76u - 34v + 60\big)dvdu=\frac{753}{1120}<1.$$
The obtained contradiction completes the~proof of the~lemma.
\end{proof}

\begin{lemma}
\label{lemma:3-12-Z-in-E}
One has $Z\not\subset E_L$.
\end{lemma}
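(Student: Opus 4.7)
The strategy is to apply Theorem~\ref{corollary:Kento-formula-Fano-threefold-surface-curve} with the auxiliary surface $Y=E_L$, in parallel with Lemma~\ref{lemma:3-12-Z-lines}. Suppose for contradiction that $Z\subset E_L$. The subgroup $G\cong(\DZ/2\DZ)^2$ of $\PGL_2(\DC)$ is the Klein four-group, which has no fixed point on $L\cong\DP^1$; hence the $G$-invariant irreducible curve $Z$ must surject onto $L$ under $\pi|_{E_L}$. Identifying $E_L\cong\DP^1\times\DP^1$ and letting $\mathbf{f}=l_L$ and $\mathbf{s}$ denote the two ruling classes (so $\mathbf{f}$ is the fiber class of $\pi|_{E_L}$), we write $Z\sim a\mathbf{s}+b\mathbf{f}$ with $a\geq 1$ and $b\geq 0$. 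By Theorem~\ref{theorem:divisorially-stable} we have $S_X(E_L)<1$, so the inequality in Theorem~\ref{corollary:Kento-formula-Fano-threefold-surface-curve} combined with $\beta(F)\leq 0$ forces $S(W^{E_L}_{\bullet,\bullet};Z)\geq 1$. The goal is to contradict this by direct computation.

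I first compute the Zariski decomposition of $-K_X-uE_L$ by intersecting with the extremal curves $l_L,l_C,l_R$: the divisor is nef on $[0,1]$, and on $[1,3/2]$ one subtracts $(u-1)R$ to obtain positive part $P(u)=(2-u)\pi^*(4H)+(2u-3)E_C-2E_L$; the pseudo-effective threshold is $\tau=3/2$. Using $E_C|_{E_L}=0$, $\pi^*H|_{E_L}=\mathbf{f}$, and $E_L|_{E_L}=-\mathbf{s}+\mathbf{f}$ (pinned down by $E_L|_{E_L}\cdot\mathbf{f}=-1$ and $(E_L|_{E_L})^2=E_L^3=-\deg N_{L/\DP^3}=-2$), the restrictions are
\[
P(u)|_{E_L}=(1+u)\mathbf{s}+(3-u)\mathbf{f}\text{ for }u\in[0,1],\qquad P(u)|_{E_L}=2\mathbf{s}+(6-4u)\mathbf{f}\text{ for }u\in[1,3/2],
\]
together with $R|_{E_L}=\mathbf{s}+3\mathbf{f}$. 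On $E_L\cong\DP^1\times\DP^1$ the pseudo-effective and nef cones both coincide with $\DR_{\geq 0}\mathbf{s}+\DR_{\geq 0}\mathbf{f}$, so $P(u)|_{E_L}-vZ$ has trivial Zariski negative part until it exits the cone at $v_{\max}(u)=\min\bigl((\text{coeff of }\mathbf{s})/a,\,(\text{coeff of }\mathbf{f})/b\bigr)$, with volume $2(\text{coeff of }\mathbf{s}-va)(\text{coeff of }\mathbf{f}-vb)$ in the nef range and zero beyond.

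The resulting double integral is monotonically non-increasing in both $a$ and $b$ (each increment simultaneously shrinks the integration range and the integrand), so $S(W^{E_L}_{\bullet,\bullet};Z)$ is maximized at the smallest admissible class. Direct computation for $Z\sim\mathbf{s}$ gives
\[
S\bigl(W^{E_L}_{\bullet,\bullet};\mathbf{s}\bigr)=\frac{3}{28}\Bigl(\int_0^1(3-u)(1+u)^2\,du+\int_1^{3/2}4(6-4u)\,du\Bigr)=\frac{3}{28}\cdot\frac{91}{12}=\frac{13}{16},
\]
and for $Z\sim a\mathbf{s}+\mathbf{f}$ one analogously finds $S(W^{E_L}_{\bullet,\bullet};Z)=(91a^3-31a^2+8a-4)/(112a^4)\leq 4/7$. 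The $N(u)|_{E_L}$-term contributes only if $Z$ is a component of $R|_{E_L}\sim\mathbf{s}+3\mathbf{f}$, in which case the extra piece equals $(3/28)\int_1^{3/2}(24-16u)(u-1)\,du=1/28$, still keeping $S(W^{E_L}_{\bullet,\bullet};Z)<1$. Hence in every case $S(W^{E_L}_{\bullet,\bullet};Z)<1$, contradicting the Abban--Zhuang lower bound. The main technical obstacle is the two-variable case analysis in $(a,b)$; the monotonicity above reduces it to the small classes $(1,0)$ and $(1,1)$, which are handled explicitly.
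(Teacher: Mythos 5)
Your proposal is correct and follows essentially the same route as the paper: restrict to $E_L\cong\mathbb{P}^1\times\mathbb{P}^1$, compute the same Zariski decomposition of $-K_X-uE_L$ (with negative part $(u-1)R$ on $[1,3/2]$), use the absence of $G$-fixed points to force $a\geq 1$, reduce by monotonicity of the volume to the class $\mathbf{s}$ with value $13/16$, and account separately for the $\mathrm{ord}_Z(N(u)|_{E_L})$ term when $Z$ lies in $R|_{E_L}$ (your $1/28$ matches the paper's computation, which totals $19/56$ in that case). The only cosmetic difference is that you bound the two contributions separately ($13/16+1/28<1$) rather than recomputing the volume integral for $Z\sim\mathbf{s}+3\mathbf{l}$ exactly.
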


\begin{proof}
Suppose that $Z\subset E_L$. Observe that $E_L\cong\mathbb{P}^1\times\mathbb{P}^1$.
Let $\mathbf{s}$ be the~section of the~natural projection $E_L\to L$ such that $\mathbf{s}^2=0$, and $\mathbf{l}$ be a~fiber of this projection.
Then
$$E_L\vert_{E_L}\sim -\mathbf{s}+\mathbf{l}\text{, }\pi^*(H)\vert_{E_L}\sim\mathbf{l}\text{, }R\vert_{E_L}\sim \mathbf{s}+3\mathbf{l},$$
and  $E_C$ and $E_L$ are disjoint. By Theorem~\ref{theorem:divisorially-stable}, we have~\mbox{$S_X(E_L)<1$}.
Thus, we conclude that $S(W_{\bullet,\bullet}^{E_L};Z)\geqslant 1$ by  Corollary~\ref{corollary:Kento-formula-Fano-threefold-surface-curve}.
Let us compute $S(W_{\bullet,\bullet}^{E_L};Z)$.
Take $u\in\mathbb{R}_{\geqslant 0}$. Observe that
$$-K_X-uE_L\sim_{\mathbb{R}}\frac{1}{2}R+2(\pi^*(H)-E_L)+\big(\frac{3}{2}-u\big)E_L,$$
which implies that $-K_X-uE_L$ is pseudo-effective if and only if $u\leqslant\frac{3}{2}$.
Let $P(u)=P(-K_X-uE_L)$ and $N(u)=N(-K_X-uE_L)$. Then
$$
P(u)=
\left\{\aligned
&-K_X-uE_L\ \text{for $0\leqslant u\leqslant 1$}, \\
&(8-4u)\pi^*(H)-(3-2u)E_C-2E_L\ \text{for $1\leqslant u\leqslant\frac{3}{2}$},
\endaligned
\right.
$$
and $$
N(u)=
\left\{\aligned
&0\ \text{for $0\leqslant u\leqslant 1$}, \\
&(u-1)R\ \text{for $1\leqslant u\leqslant\frac{3}{2}$}.
\endaligned
\right.
$$
\\\\$\boxed{1}$ Suppose that $Z\ne R\vert_{E_L}$ and $Z\sim a\mathbf{s}+b \mathbf{l}$. Note that $a\ge 1$ since $\DP^3$ does not contain $G$-fixed points. Then using Corollary~\ref{corollary:Kento-formula-Fano-threefold-surface-curve} we obtain
$$
S\big(W_{\bullet,\bullet}^{E_L};Z\big)=\frac{3}{28}\int_{0}^{\frac{3}{2}}\int_{0}^{\infty}\mathrm{vol}\Big(P(u)\big\vert_{Q}-v(a\mathbf{s}+b \mathbf{l})\Big)dvdu\leqslant \frac{3}{28}\int_{0}^{\frac{3}{2}}\int_{0}^{\infty}\mathrm{vol}\Big(P(u)\big\vert_{Q}-v\mathbf{s}\Big)dvdu.
$$
so it is enough to show that the last integral is less than $1$ to deduce a contradiction. So suppose $Z\sim \mathbf{s}$. We have that:
$$P(u)\vert_{E_L}-v \mathbf{s}\sim_{\mathbb{R}}\begin{cases}(1 + u - v)\mathbf{s} + (3 - u)\mathbf{l}\text{ for }0\le u\le 1,\\
(2 - v)\mathbf{s} + (6 - 4u)\mathbf{l}\text{ for }1\le u\le 3/2.
\end{cases}$$
Then  Corollary~\ref{corollary:Kento-formula-Fano-threefold-surface-curve} gives
$$S\big(W_{\bullet,\bullet}^{E_L}; \mathbf{s}\big)=\frac{3}{28}\int_{0}^{\frac{3}{2}}\int_{0}^{\infty}\mathrm{vol}\Big(P(u)\big\vert_{Q}-v\mathbf{s}\Big)dvdu=$$
$$\frac{3}{28}\int_{0}^{1}\int_{0}^{1+u}2(3 - u)(1 + u - v)dvdu+\frac{3}{28}\int_{1}^{3/2}\int_{0}^{2}4(v-2 )(-3 + 2u)dvdu=\frac{13}{16}<1.$$
So we obtained a contradiction with Corollary~\ref{corollary:Kento-formula-Fano-threefold-surface-curve}.
\\\\ Thus, for any $G$-invariant curve $Z\subset E_L$ such that $Z\ne R\vert_{E_L}$ we have $S\big(W_{\bullet,\bullet}^{Q};Z\big)<1$-contradiction with Corollary~\ref{corollary:Kento-formula-Fano-threefold-surface-curve}.
\\\\$\boxed{2}$ Suppose $Z=R\vert_{E_L}\sim \mathbf{s}+3\mathbf{l}$.
\noindent
Take any $v\in\mathbb{R}_{\geqslant 0}$ then we have: $$P(u)\vert_{E_L}-vZ\sim_{\mathbb{R}}\begin{cases}(1+u-v)\mathbf{s}+(3-u-3v)\mathbf{l},\text{ for }0\le u \le 1,\\
(2-v)\mathbf{s}+(6-4u-3v)\mathbf{l}, \text{ for }1\le u \le 3/2.
\end{cases}$$
Hence, if $Z=R\vert_{E_L}$, then  Corollary~\ref{corollary:Kento-formula-Fano-threefold-surface-curve} gives
\begin{multline*}
S\big(W_{\bullet,\bullet}^{E_L};Z\big)=\frac{3}{28}\int_{1}^{\frac{3}{2}}(u-1)E_L\cdot\big((8-4u)\pi^*(H)-(3-2u)E_C-2E_L\big)^2du+\\
+\frac{3}{28}\int_{0}^{\frac{3}{2}}\int_{0}^{\infty}\mathrm{vol}\Big(P(u)\big\vert_{E_L}-vZ\Big)dvdu=\\
=\frac{3}{28}\int_{1}^{\frac{3}{2}}4(u-1)(6-4u)du+\frac{3}{28}\int_{0}^{1}\int_{0}^{\frac{3-u}{3}}2(1+u-v)(3-u-3v)dvdu+\\
+\frac{3}{28}\int_{1}^{\frac{3}{2}}\int_{0}^{\frac{6-4u}{3}}2(2-v)(6-4u-3v)dvdu=\frac{19}{56}<1.\quad\quad\quad\quad\quad\quad
\end{multline*}
The obtained contradiction with Corollary~\ref{corollary:Kento-formula-Fano-threefold-surface-curve} and completes the~proof of the~lemma.
\end{proof}

\begin{lemma}
\label{lemma:3-12-Z-in-Q}
Let $Q$ be a $G$-invariant quadric surface in $\DP^3$ that contains $C$. Then $Z\not\subset Q$.
\end{lemma}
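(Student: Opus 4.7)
The strategy is parallel to that of Lemmas \ref{lemma:3-12-Z-lines} and \ref{lemma:3-12-Z-in-E}. Let $S_Q$ denote the strict transform of $Q$ on $X$. Assuming $Z\subset S_Q$, I would combine $S_X(S_Q)<1$ from Theorem~\ref{theorem:divisorially-stable} with Corollary~\ref{corollary:Kento-formula-Fano-threefold-surface-curve} to reduce the task to showing $S(W^{S_Q}_{\bullet,\bullet};Z)<1$ for every $G$-invariant irreducible curve $Z\subset S_Q$; this will contradict $\beta(F)\leqslant 0$.

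First I would work out the geometry of $S_Q$. Since $Q$ is smooth, meets $L$ transversally in two distinct points $p_1,p_2$ not lying on a common ruling (by Section~\ref{2}), and contains $C$ as a smooth curve of bidegree $(1,2)$ on $Q\cong\mathbb{P}^1\times\mathbb{P}^1$, the surface $S_Q$ is isomorphic to the blowup of $\mathbb{P}^1\times\mathbb{P}^1$ at $\{p_1,p_2\}$, a smooth del Pezzo surface of degree $6$, and $S_Q\sim\pi^*(2H)-E_C$. Denoting by $\mathbf{f}_1,\mathbf{f}_2$ the ruling classes and by $\mathbf{e}_1,\mathbf{e}_2$ the exceptional classes on $S_Q$, one has
$$\pi^*H\vert_{S_Q}=\mathbf{f}_1+\mathbf{f}_2,\quad E_L\vert_{S_Q}=\mathbf{e}_1+\mathbf{e}_2,\quad E_C\vert_{S_Q}=\bar{C}\sim\mathbf{f}_1+2\mathbf{f}_2,\quad R\vert_{S_Q}\sim 2\mathbf{f}_1-\mathbf{e}_1-\mathbf{e}_2.$$
Intersecting $-K_X-uS_Q$ with the extremal curves $l_L,l_C,l_R$ gives $1,1-u,1$ respectively, so it is nef on $[0,1]$; for $u\in[1,3/2]$ the Zariski decomposition is $P(u)=(4-2u)\pi^*H-E_L$, $N(u)=(u-1)E_C$, and the pseudo-effective threshold is $\tau=3/2$. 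Restricting yields
$$P(u)\vert_{S_Q}=\begin{cases}(3-u)\mathbf{f}_1+2\mathbf{f}_2-\mathbf{e}_1-\mathbf{e}_2&\text{if $0\leqslant u\leqslant 1$,}\\ (4-2u)(\mathbf{f}_1+\mathbf{f}_2)-\mathbf{e}_1-\mathbf{e}_2&\text{if $1\leqslant u\leqslant 3/2$,}\end{cases}$$
and $N(u)\vert_{S_Q}=(u-1)\bar{C}$ on $[1,3/2]$.

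Finally I would classify $G$-invariant irreducible curves $Z\subset S_Q$. Since $G$ has no fixed points on $\mathbb{P}^3$, some involution in $G$ swaps $p_1,p_2$ and hence also $\mathbf{e}_1,\mathbf{e}_2$, so every $G$-invariant class in $\mathrm{Pic}(S_Q)$ has the form $a\mathbf{f}_1+b\mathbf{f}_2-\alpha(\mathbf{e}_1+\mathbf{e}_2)$; irreducibility leaves a short list of minimal candidates, namely a $G$-invariant ruling ($\mathbf{f}_1$ or $\mathbf{f}_2$), the strict transform of a $G$-invariant $(1,1)$-curve through $\{p_1,p_2\}$ (class $\mathbf{f}_1+\mathbf{f}_2-\mathbf{e}_1-\mathbf{e}_2$), and the curve $\bar{C}$ itself. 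For every $Z\neq\bar{C}$ the term $\mathrm{ord}_Z(N(u)\vert_{S_Q})$ vanishes so only the $\mathrm{vol}$-integral contributes, and the volume monotonicity $\mathrm{vol}(P(u)\vert_{S_Q}-vZ)\leqslant\mathrm{vol}(P(u)\vert_{S_Q}-vZ_0)$ for $Z-Z_0$ effective reduces the verification to the smallest class in each family. The hard case is $Z=\bar{C}$: then $\mathrm{ord}_{\bar{C}}(N(u)\vert_{S_Q})=u-1$ produces a nonzero first summand on $[1,3/2]$, and the Zariski decomposition of $P(u)\vert_{S_Q}-v\bar{C}$ crosses walls determined by the $(-1)$-curves $\mathbf{f}_i-\mathbf{e}_j$ (at $v=1/2$ on $[0,1]$, and at analogous thresholds on $[1,3/2]$), requiring a piecewise computation of $\mathrm{vol}(P(u)\vert_{S_Q}-v\bar{C})$ across several subintervals. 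Once the resulting integrals are shown strictly less than $1$ in each case, the desired contradiction follows.
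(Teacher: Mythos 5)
Your proposal is correct and follows essentially the same route as the paper: strict transform of $Q$ as a degree-$6$ del Pezzo surface, the same Zariski decomposition of $-K_X-uQ$ with $N(u)=(u-1)E_C$ on $[1,3/2]$, reduction of $G$-invariant classes to non-negative combinations of $\ell_1$, $\ell_2$, $\ell_1+\ell_2-e_1-e_2$, and volume monotonicity; the resulting integrals do come out to $\tfrac{109}{112}$, $\tfrac{89}{112}$, $\tfrac{47}{56}$, all below $1$. The only difference is in the case $Z=E_C\vert_Q$, where the paper avoids your proposed wall-crossing computation by applying monotonicity once more (bounding $\mathrm{vol}(P(u)\vert_Q-v(\ell_1+2\ell_2))$ by $\mathrm{vol}(P(u)\vert_Q-v\ell_1)$) and adding the separately computed $\mathrm{ord}$-term $\tfrac{5}{224}$, giving $\tfrac{223}{224}<1$.
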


\begin{proof}
Suppose that $Z\subset Q$. Let us seek for a~contradiction.
Recall that $\pi(Q)$ is a~smooth quadric surface in $\mathbb{P}^3$ that  contains the~twisted cubic curve $C$, and it does not contain line $L$.
Let us identify $\pi(Q)=\mathbb{P}^1\times\mathbb{P}^1$ such that $C$ is a~curve in $\pi(Q)$ of degree $(1,2)$.
Then $\pi$ induces a~birational morphism $\varphi\colon Q\to \mathbb{P}^1\times\mathbb{P}^1$ that is a~blow up of two intersection points $\pi(Q)\cap L$, which are not contained in the~curve~$C$.
Moreover, the~surface $Q$ is a~smooth del Pezzo surface of degree $6$,
because the~points of the~intersection $\pi(Q)\cap L$ are not contained in one line in $\pi(Q)$ since otherwise this line would be $L$. But $L$ is not contained in $\pi(Q)$ which is a contradiction.
\\
By Theorem~\ref{theorem:divisorially-stable}, we have \mbox{$S_X(Q)<1$}.
Then $S(W_{\bullet,\bullet}^{Q};Z)\geqslant 1$ by  Corollary~\ref{corollary:Kento-formula-Fano-threefold-surface-curve}.
Let us show that $S(W_{\bullet,\bullet}^{Q};Z)<1$, which would give us the~desired contradiction.
\\
Take $u\in\mathbb{R}_{\geqslant 0}$. Then $$-K_X-uQ\sim_{\mathbb{R}} 2\pi^*(H)-E_L+(1-u)(2\pi^*(H)-E_C),$$
which implies that $-K_X-uQ$ is nef for every $u\in[0,1]$.
On the~other hand, we have
$$
-K_X-uQ\sim_{\mathbb{R}} (4-2u)\big(\pi^*(H)-E_L\big)+(3-2u)E_L+(u-1)E_C,
$$
so that the~divisor $-K_X-uS$ is pseudo-effective $\iff$ $u\in[0,\frac{3}{2}]$.
 Let $P(u)=P(-K_X-uQ)$ and $N(u)=N(-K_X-uQ)$. Then  we have
$$
P\big(-K_X-uQ\big)=
\left\{\aligned
&-K_X-uQ\ \text{for $0\leqslant u\leqslant 1$}, \\
&(4-2u)\pi^*(H)-E_L\ \text{for $1\leqslant u\leqslant\frac{3}{2}$},
\endaligned
\right.
$$
and $$N(-K_X-uQ)=\begin{cases}0\text{ for }0\leqslant u\leqslant 1,\\(u-1)E_C\text{ for }1\leqslant u\leqslant\frac{3}{2}.
\end{cases}$$
\noindent
Let us introduce some notation on $Q$. Suppose $\varphi:Q\to \DP^1\times\DP^1$ is the blowup at points $A_1$, $A_2$
First, we denote by $\ell_1$ and $\ell_2$ the~proper transforms on $Q$ of general curves $k_1$ and $k_2$ in  $\mathbb{P}^1\times\mathbb{P}^1$ of degrees $(1,0)$ and $(0,1)$, respectively.
Second, we denote by $e_1$ and $e_1$ the~exceptional curves of $\varphi$ which correspond to points $A_1$, $A_2$ respectively.
Third, we let $F_{11}$, $F_{12}$, $F_{21}$, $F_{22}$ be the~$(-1)$-curves on $Q$  such that
$$F_{11}\sim \ell_1-e_1,\text{  } F_{12}\sim \ell_1-e_2,\text{  } F_{21}\sim \ell_2-e_1,\text{  } F_{22}\sim \ell_2-e_2.$$
Then $$\pi^*(H)\vert_{Q}\sim \ell_1+\ell_2,\text{  }E_L\vert_{Q}\sim e_1+e_2,\text{  }E_C\vert_{Q}\sim \ell_1+2\ell_2.$$
$\boxed{1}$ Suppose $Z\ne E_C\vert_{Q}$, then $\varphi(Z)$ is a curve since since $Z\ne e_1$ and $Z\ne e_2$, because neither $e_1$ nor $e_2$ is $G$-invariant. Now we have that $\varphi(Z)\sim ak_1+bk_2$ and so 
$$Z\sim a\ell_1+b\ell_2-m_1e_1-m_2e_2,$$
where $m_1$ is a multiplicity of $\varphi(Z)$ at point $A_1$, $m_2$ is a multiplicity of $\varphi(Z)$ at point $A_2$. Note that $G$ exchanges $A_1$ and $A_2$ and $Z$ is a $G$-invariant curve thus $m_1=m_2=:m$. We know that $Z\not \in\{F_{11}, F_{12}, F_{21}, F_{22}\}$ since $F_{ij}$-s ($i,j\in \{1,2\}$) are not $G$-invariant. Thus:
$$0\le Z\cdot F_{11}=(a\ell_1+b\ell_2-m_1e_1-m_2e_2)(l_1-e_1)=b-m\Rightarrow b\ge m,$$
$$0\le Z\cdot F_{22}=(a\ell_1+b\ell_2-m_1e_1-m_2e_2)(l_2-e_2)=a-m\Rightarrow a\ge m.$$
Now we have that 
$$Z\sim a\ell_1+b\ell_2-m(e_1+e_2)=$$
$$=\begin{cases}
\underbrace{(a-b)}_{\ge 0}\ell_1+\underbrace{m}_{\ge 0}(\ell_1+\ell_2-e_1-e_2)+\underbrace{(b-m)}_{\ge 0}(\ell_1+\ell_2),\text{ for }a\ge b,\\
\underbrace{(b-a)}_{\ge 0}\ell_2+\underbrace{m}_{\ge 0}(\ell_1+\ell_2-e_1-e_2)+\underbrace{(a-m)}_{\ge 0}(\ell_1+\ell_2),\text{ for }b\ge a.
\end{cases}$$
So we can decompose each curve $Z$ as the sum of $\ell_1$, $\ell_2$, $\ell_1+\ell_2-e_1-e_2$ with non-negative coefficients, i.e. $Z\sim c_1\ell_1+c_2\ell_2+c_3(\ell_1+\ell_2-e_1-e_2)$ for some non-negative integers $c_1$, $c_2$, $c_3$. Note if for example $c_1\ge 1$ then:
$$
S\big(W_{\bullet,\bullet}^{Q};Z\big)=\frac{3}{28}\int_{0}^{\frac{3}{2}}\int_{0}^{\infty}\mathrm{vol}\Big(P(u)\big\vert_{Q}-v(c_1\ell_1+c_2\ell_2+c_3(\ell_1+\ell_2-e_1-e_2))\Big)dvdu\leqslant$$$$\leqslant \frac{3}{28}\int_{0}^{\frac{3}{2}}\int_{0}^{\infty}\mathrm{vol}\Big(P(u)\big\vert_{Q}-v\ell_1\Big)dvdu.
$$
and similarly for $c_2$ and $c_3$. So it is enough to get $S\big(W_{\bullet,\bullet}^{Q};Z\big)<1$ for $Z\sim \ell_1$, $Z\sim \ell_2$, $Z\sim \ell_1+\ell_2-e_1-e_2$ to deduce a contradiction.
\\1). Suppose $Z\sim \ell_1$.
\\Take any $v\in \DR_{\ge 0}$. Suppose $P(u,v)$ is a positive part of the Zariski decomposition of $(-K_X-uS)|_S-vZ$,  $N(u,v)$ is a negative part of the Zariski decomposition of $(-K_X-uS)|_S-vZ$.
\\If $u\in[0,1]$ then
$$P(u)\vert_{Q}-vZ\sim (-K_X-uQ)|_Q-vZ=-e_1 - e_2 + (3 - u - v)\ell_1 + 2\ell_2.$$
Now  we find $v$ such that the divisor $P(u)\vert_{Q}-vZ$ is nef. We have that:
\\\\
\begin{minipage}{0.35\textwidth}
\begin{itemize}
    \item $P(u,v)\cdot e_1=1$,
    \item $P(u,v)\cdot e_2=1$,
    \item $P(u,v)\cdot F_{11}=1$,
\end{itemize}
\end{minipage}
\hfill
\begin{minipage}{0.8\textwidth}
\begin{itemize}
    \item $P(u,v)\cdot F_{12}=1$,
    \item $P(u,v)\cdot F_{21}=2-u-v$,
    \item $P(u,v)\cdot F_{22}=2-u-v$.
\end{itemize}
\end{minipage}
\\\\
Thus, for $v\le 2-u$ we have that the divisor $P(u)|_Q-vZ$ is nef. 
\\Suppose $v\ge 2-u$. We find the Zariski decomposition of $P(u)|_Q-vZ$. 
$$P(u,v)=(-e_1 - e_2 + (3 - u - v)\ell_1 + 2\ell_2)+(2-u-v)(F_{21}+F_{22})$$
$$=(-3 + u + v)e_1 + (-3 + u + v)e_2 + (3 - u - v)\ell_1 + (6 - 2u - 2v)\ell_2,$$
$$N(u,v)=-(2-u-v)(F_{21}+F_{22})=-(2-u-v)(2\ell_2-e_1-e_2).$$
We have that:
\\\\
\begin{minipage}{0.35\textwidth}
\begin{itemize}
    \item $P(u,v)\cdot e_1=3 - u - v$,
    \item $P(u,v)\cdot e_2=3 - u - v$,
    \item $P(u,v)\cdot F_{11}=3 - u - v$,

\end{itemize}
\end{minipage}
\hfill
\begin{minipage}{0.8\textwidth}
\begin{itemize}
    \item $P(u,v)\cdot F_{12}=3 - u - v$,
    \item $P(u,v)\cdot F_{21}=0$,
    \item $P(u,v)\cdot F_{22}=0$.
\end{itemize}
\end{minipage}
\\\\
We get $v\le 3-u$. Note that for $v=3-u$ we have that $P_2(u,3-u)^2=0$ so $P(u)|_S-vZ$ is pseudo-effective if and only if $v$ satisfies $v\le 3-u$.
\\For $u\in[1,3/2]$  and $v\in\mathbb{R}_{\geqslant 0}$ we have that:
$$P(u)\vert_{Q}-vZ\sim (-K_X-uQ)|_Q-vZ=-e_1 - e_2 + (4 - 2u - v)\ell_1 + (4 - 2u)\ell_2.$$
Now  we find $v$ such that the divisor $P(u)\vert_{Q}-vZ$ is nef. We have that:
\\\\
\begin{minipage}{0.35\textwidth}
\begin{itemize}
    \item $P(u,v)\cdot e_1=1$,
    \item $P(u,v)\cdot e_2=1$,
    \item $P(u,v)\cdot F_{11}=3 - 2u$,
\end{itemize}
\end{minipage}
\hfill
\begin{minipage}{0.8\textwidth}
\begin{itemize}
    \item $P(u,v)\cdot F_{12}=3 - 2u$,
    \item $P(u,v)\cdot F_{21}=3 - 2u - v$,
    \item $P(u,v)\cdot F_{22}=3 - 2u - v$.
\end{itemize}
\end{minipage}
\\\\
Thus for $v\le 3-2u$ we have that the divisor $P(u)|_Q-vZ$ is nef. 
\\Suppose $v\ge 3-2u$. We find the Zariski decomposition of $P(u)|_S-vZ$. 
$$P(u,v)=(-e_1 - e_2 + (4 - 2u - v)\ell_1 + (4 - 2u)\ell_2)+(3 - 2u - v)(F_{21}+F_{22})=$$
$$=(-4 + 2u + v)e_1 + (-4 + 2u + v)e_2 + (4 - 2u - v)\ell_1 + (10 - 6u - 2v)\ell_2,$$
$$N(u,v)=-(3 - 2u - v)(F_{21}+F_{22})=(3 - 2u - v)(2\ell_2-e_1-e_2).$$
 We have that:
\\\\
\begin{minipage}{0.35\textwidth}
\begin{itemize}
    \item $P(u,v)\cdot e_1=4 - 2u - v$,
    \item $P(u,v)\cdot e_2=4 - 2u - v$,
    \item $P(u,v)\cdot F_{11}=6 - 4u - v$,
\end{itemize}
\end{minipage}
\hfill
\begin{minipage}{0.8\textwidth}
\begin{itemize}
    \item $P(u,v)\cdot F_{12}=6 - 4u - v$,
    \item $P(u,v)\cdot F_{21}=0$,
    \item $P(u,v)\cdot F_{22}=0$.
\end{itemize}
\end{minipage}
\\\\
To check when the divisor is pseudo-effective we should choose the strongest inequality from the following system:
$$\begin{cases}
v\le 4-2u,\\
v\le  6-4u,
\end{cases}\Rightarrow v\le  6-4u.
$$
We get $v\le 6-4u$. Note that for $v=6-4u$ we have that $P_4(u,6-4u)^2=0$ so $P(u)|_S-vZ$ is pseudo-effective if and only if  $v\le 6-4u$.
The Corollary~\ref{corollary:Kento-formula-Fano-threefold-surface-curve} gives us:
\begin{multline*}
S\big(W_{\bullet,\bullet}^{Q};Z\big)=\frac{3}{28}\int_{0}^{\frac{3}{2}}\int_{0}^{\infty}\mathrm{vol}\Big(P(u)\big\vert_{Q}-v\ell_1\Big)dvdu=\frac{3}{28}\int_{0}^{1}\int_{0}^{2-u}\Big(10 - 4u - 4v\Big)dvdv+\\
+\frac{3}{28}\int_{0}^{1}\int_{2-u}^{3-u}2(3-u-v)^2dvdu+\frac{3}{28}\int_{1}^{\frac{3}{2}}\int_{0}^{3-2u}\Big(8u^2 + 4uv - 32u - 8v + 30\Big)dvdu+\\
+\frac{3}{28}\int_{1}^{\frac{3}{2}}\int_{3-2u}^{6-4u}2(4-2u-v)(6-4u-v)dvdu=\frac{109}{112}.
\end{multline*}
So we obtained a contradiction with Corollary~\ref{corollary:Kento-formula-Fano-threefold-surface-curve}.
\\2). Suppose $Z\sim \ell_2$.
\\Take any $v\in \DR_{\ge 0}$. Abusing the notations suppose $P(u,v)$ is a positive part of the Zariski decomposition of $(-K_X-uS)|_S-vZ$,  $N(u,v)$ is a negative part of the Zariski decomposition of $(-K_X-uS)|_S-vZ$. 
\\If $u\in[0,1]$ then
$$P(u)\vert_{Q}-vZ\sim (-K_X-uQ)|_Q-vZ=-e_1 - e_2 + (3 - u)\ell_1 + (2-v)\ell_2.$$
Now  we find $v$ such that the divisor $P(u)\vert_{Q}-vZ$ is nef. We have that:
\\\\
\begin{minipage}{0.35\textwidth}
\begin{itemize}
    \item $P(u,v)\cdot e_1=1$,
    \item $P(u,v)\cdot e_2=1$,
    \item $P(u,v)\cdot F_{11}=1 - v$,
\end{itemize}
\end{minipage}
\hfill
\begin{minipage}{0.8\textwidth}
\begin{itemize}
    \item $P(u,v)\cdot F_{12}=1 - v$,
    \item $P(u,v)\cdot F_{21}=2 - u$,
    \item $P(u,v)\cdot F_{22}=2 - u$.
\end{itemize}
\end{minipage}
\\\\
Thus for $v\le 1$ we have that the divisor $P(u)|_Q-vZ$ is nef. 
\\Suppose $v\ge 1$. We find the Zariski decomposition of $P(u)|_Q-vZ$. 
$$P(u,v)=(-e_1 - e_2 + (3 - u)\ell_1 + (2-v)\ell_2)+(1-v)(F_{11}+F_{12})=$$
$$=(v - 2)e_1 + (v - 2)e_2 + (5 - u - 2v)\ell_1 + (2 - v)\ell_2,$$
$$N(u,v)=-(v-1)(F_{11}+F_{12})=-(v-1)(2\ell_1-e_1-e_2).$$
We have that:
\\\\
\begin{minipage}{0.35\textwidth}
\begin{itemize}
    \item $P(u,v)\cdot e_1=2 - v$,
    \item $P(u,v)\cdot e_2=2 - v$,
    \item $P(u,v)\cdot F_{11}=0$,
\end{itemize}
\end{minipage}
\hfill
\begin{minipage}{0.8\textwidth}
\begin{itemize}
    \item $P(u,v)\cdot F_{12}=0$,
    \item $P(u,v)\cdot F_{21}=-v + 3 - u$,
    \item $P(u,v)\cdot F_{22}=-v + 3 - u$.
\end{itemize}
\end{minipage}
\\\\
To check when the divisor is pseudo-effective we should choose the strongest inequality from the following system:
$$\begin{cases}
v\le 2,\\
v\le  3-u,
\end{cases}\Rightarrow v\le  2.
$$
We get $v\le 2$. Note that for $v=2$ we have that $P_2(u,2)^2=0$ so $P(u)|_S-vZ$ is pseudo-effective if and only if $v\le 2$.
\\For $u\in[1,3/2]$  and $v\in\mathbb{R}_{\geqslant 0}$ we have that:
$$P(u)\vert_{Q}-vZ\sim (-K_X-uQ)|_Q-vZ=-e_1 - e_2 + (4 - 2u)\ell_1 + (4 - 2u - v)\ell_2.$$
Now  we find $v$ such that the divisor $P(u)\vert_{Q}-vZ$ is nef. We have that:
\\\\
\begin{minipage}{0.35\textwidth}
\begin{itemize}
    \item $P(u,v)\cdot e_1=1$,
    \item $P(u,v)\cdot e_2=1$,
    \item $P(u,v)\cdot F_{11}=3 - 2u - v$,
\end{itemize}
\end{minipage}
\hfill
\begin{minipage}{0.8\textwidth}
\begin{itemize}
    \item $P(u,v)\cdot F_{12}=3 - 2u - v$,
    \item $P(u,v)\cdot F_{21}=3 - 2u$,
    \item $P(u,v)\cdot F_{22}=3 - 2u$.
\end{itemize}
\end{minipage}
\\\\
Thus for $v\le 3-2u$ we have that the divisor $P(u)|_Q-vZ$ is nef. 
\\Suppose $v\ge 3-2u$. We find the Zariski decomposition of $P(u)|_S-vZ$. 
$$P(u,v)=(-e_1 - e_2 + (4 - 2u)\ell_1 + (4 - 2u - v)\ell_2)+(3 - 2u - v)(F_{11}+F_{12})=$$
$$=(-4 + 2u + v)e_1 + (-4 + 2u + v)e_2 + (10 - 6u - 2v)\ell_1 + (4 - 2u - v)l_2,$$
$$N(u,v)=-(3 - 2u - v)(F_{11}+F_{12})=(3 - 2u - v)(2\ell_1-e_1-e_2).$$
 We have that:
 \\\\
\begin{minipage}{0.35\textwidth}
\begin{itemize}
    \item $P(u,v)\cdot e_1=4 - 2u - v$,
    \item $P(u,v)\cdot e_2=4 - 2u - v$,
    \item $P(u,v)\cdot F_{11}=0$,
\end{itemize}
\end{minipage}
\hfill
\begin{minipage}{0.8\textwidth}
\begin{itemize}
    \item $P(u,v)\cdot F_{12}=0$,
    \item $P(u,v)\cdot F_{21}=6 - 4u - v$,
    \item $P(u,v)\cdot F_{22}=6 - 4u - v$.
\end{itemize}
\end{minipage}
\\\\
We get $v\le 6-4u$. Note that for $v=6-4u$ we have that $P_4(u,6-4u)^2=0$ so $P(u)|_S-vZ$ is pseudo-effective if and only if $v\le 6-4u$.
The Corollary~\ref{corollary:Kento-formula-Fano-threefold-surface-curve} gives us:
\begin{multline*}
S\big(W_{\bullet,\bullet}^{Q};Z\big)=\frac{3}{28}\int_{0}^{\frac{3}{2}}\int_{0}^{\infty}\mathrm{vol}\Big(P(u)\big\vert_{Q}-v\ell_2\Big)dvdu=\frac{3}{28}\int_{0}^{1}\int_{0}^{1}\Big(2uv - 4u - 6v + 10\Big)dvdv+\\
+\frac{3}{28}\int_{0}^{1}\int_{1}^{2}2(v - 2)(v - 3 + u)dvdu+\frac{3}{28}\int_{1}^{\frac{3}{2}}\int_{0}^{3-2u}\Big(8u^2 + 4uv - 32u - 8v + 30\Big)dvdu+\\
+\frac{3}{28}\int_{1}^{\frac{3}{2}}\int_{3-2u}^{6-4u}2(4-2u-v)(6-4u-v)dvdu=\frac{89}{112}<1.
\end{multline*}
So we obtained a contradiction with Corollary~\ref{corollary:Kento-formula-Fano-threefold-surface-curve}.
\\3). Suppose $Z\sim \ell_1+\ell_2-e_1-e_2$.
\\Take any $v\in \DR_{\ge 0}$. Abusing the notations suppose $P(u,v)$ is a positive part of the Zariski decomposition of $(-K_X-uS)|_S-vZ$,  $N(u,v)$ is a negative part of the Zariski decomposition of $(-K_X-uS)|_S-vZ$. 
\\If $u\in[0,1]$ then:
$$P(u)\vert_{Q}-vZ\sim (-K_X-uQ)|_Q-vZ=(-1 + v)e_1 + (-1 + v)e_2 + (3 - u - v)\ell_1 + (2 - v)\ell_2.$$
Now  we find $v$ such that the divisor $P(u)\vert_{Q}-vZ$ is nef. We have that:
\\\\
\begin{minipage}{0.35\textwidth}
\begin{itemize}
    \item $P(u,v)\cdot e_1=-v + 1$,
    \item $P(u,v)\cdot e_2=-v + 1$,
    \item $P(u,v)\cdot F_{11}=1$,
\end{itemize}
\end{minipage}
\hfill
\begin{minipage}{0.8\textwidth}
\begin{itemize}
    \item $P(u,v)\cdot F_{12}=1$,
    \item $P(u,v)\cdot F_{21}=2 - u$,
    \item $P(u,v)\cdot F_{22}=2 - u$.
\end{itemize}
\end{minipage}
\\\\
Thus for $v\le 1$ we have that the divisor $P(u)|_Q-vZ$ is nef.
\\Suppose $v\ge 1$. We find the Zariski decomposition of $P(u)|_Q-vZ$. 
$$P(u,v)=((-1 + v)e_1 + (-1 + v)e_2 + (3 - u - v)\ell_1 + (2 - v)\ell_2)+(1-v)(e_1+e_2)=$$$$=(3 - u - v)\ell_1 + (2 - v)\ell_2,$$
$$N(u,v)=-(v-1)(e_1+e_2).$$
We have that:
\\\\
\begin{minipage}{0.35\textwidth}
\begin{itemize}
    \item $P(u,v)\cdot e_1=0$,
    \item $P(u,v)\cdot e_2=0$,
    \item $P(u,v)\cdot F_{11}=2 - v$,
\end{itemize}
\end{minipage}
\hfill
\begin{minipage}{0.8\textwidth}
\begin{itemize}
    \item $P(u,v)\cdot F_{12}=2 - v$,
    \item $P(u,v)\cdot F_{21}=-v + 3 - u$,
    \item $P(u,v)\cdot F_{22}=-v + 3 - u$.
\end{itemize}
\end{minipage}
\\\\
To check when the divisor is pseudo-effective we should choose the strongest inequality from the following system:
$$\begin{cases}
v\le 2,\\
v\le  3-u,
\end{cases}\Rightarrow v\le  2.
$$
We get $v\le 2$. Note that for $v=2$ we have that $P_2(u,2)^2=0$ so $P(u)|_S-vZ$ is pseudo-effective until $v$ satisfies $v\le 2$.
\\For $u\in[1,3/2]$  and $v\in\mathbb{R}_{\geqslant 0}$ we have that:
$$P(u)\vert_{Q}-vZ\sim (-K_X-uQ)|_Q-vZ=(v-1 )e_1 + ( v-1)e_2 + (4 - 2u - v)\ell_1 + (4 - 2u - v)\ell_2.$$
Now  we find $v$ such that the divisor $P(u)\vert_{Q}-vZ$ is nef. We have that:
\\\\
\begin{minipage}{0.35\textwidth}
\begin{itemize}
    \item $P(u,v)\cdot e_1=-v + 1$,
    \item $P(u,v)\cdot e_2=-v + 1$,
    \item $P(u,v)\cdot F_{11}=3 - 2u$,
\end{itemize}
\end{minipage}
\hfill
\begin{minipage}{0.8\textwidth}
\begin{itemize}
    \item $P(u,v)\cdot F_{12}=3 - 2u$,
    \item $P(u,v)\cdot F_{21}=3 - 2u$,
    \item $P(u,v)\cdot F_{22}=3 - 2u$.
\end{itemize}
\end{minipage}
\\\\
Thus, for $v\le 1$ we have that the divisor $P(u)|_Q-vZ$ is nef. 
\\Suppose $v\ge 3-2u$. We find the Zariski decomposition of $P(u)|_S-vZ$. 
$$P(u,v)=((v-1 )e_1 + ( v-1)e_2 + (4 - 2u - v)\ell_1 + (4 - 2u - v)\ell_2)+(1- v)(e_1+e_2)=$$$$=(4 - 2u - v)\ell_1 + (4 - 2u - v)\ell_2,$$
$$N(u,v)=-(3 - 2u - v)(e_1+e_2).$$
 We have that:
 \\\\
\begin{minipage}{0.35\textwidth}
\begin{itemize}
    \item $P(u,v)\cdot e_1=0$,
    \item $P(u,v)\cdot e_2=0$,
    \item $P(u,v)\cdot F_{11}=4 - 2u - v$,
\end{itemize}
\end{minipage}
\hfill
\begin{minipage}{0.8\textwidth}
\begin{itemize}
    \item $P(u,v)\cdot F_{12}=4 - 2u - v$,
    \item $P(u,v)\cdot F_{21}=4 - 2u - v$,
    \item $P(u,v)\cdot F_{22}=4 - 2u - v$.
\end{itemize}
\end{minipage}
\\\\
We get $v\le 4-2u$. Note that for $v=4-2u$ we have that $P(u,4-2u)^2=0$ so $P(u)|_S-vZ$ is pseudo-effective until $v$ satisfies $v\le 4-2u$.
The Corollary~\ref{corollary:Kento-formula-Fano-threefold-surface-curve} gives us:
\begin{multline*}
S\big(W_{\bullet,\bullet}^{Q};Z\big)=\frac{3}{28}\int_{0}^{\frac{3}{2}}\int_{0}^{\infty}\mathrm{vol}\Big(P(u)\big\vert_{Q}-v\ell_2\Big)dvdu=\frac{3}{28}\int_{0}^{1}\int_{0}^{1}\Big(2uv - 4u - 6v + 10\Big)dvdv+\\
+\frac{3}{28}\int_{0}^{1}\int_{1}^{2}2(v - 2)(-3 + u + v)dvdu+\frac{3}{28}\int_{1}^{\frac{3}{2}}\int_{0}^{1}\Big(2(2u - 3)(2u + 2v - 5)\Big)dvdu+\\
+\frac{3}{28}\int_{1}^{\frac{3}{2}}\int_{1}^{4-2u}2(-4 + 2u + v)^2dvdu=\frac{47}{56}<1.
\end{multline*}
So we obtained a contradiction with Corollary~\ref{corollary:Kento-formula-Fano-threefold-surface-curve}.
\\\\ Thus, for any $G$-invariant curve $Z\subset Q$ such that $Z\ne E_C\vert_{Q}$ we have $S\big(W_{\bullet,\bullet}^{Q};Z\big)<1$ which is impossible by Corollary~\ref{corollary:Kento-formula-Fano-threefold-surface-curve}.
\\\\$\boxed{2}$ Suppose $Z=E_C\vert_{Q}$, then
\begin{multline*}
S\big(W_{\bullet,\bullet}^{Q};Z\big)=\frac{3}{28}\int_{0}^{\frac{3}{2}}\Big(P(u)\cdot P(u)\cdot Q\Big)\mathrm{ord}_Z\Big(N(u)\big\vert_{Q}\Big)du+\frac{3}{28}\int_{0}^{\frac{3}{2}}\int_{0}^{\infty}\mathrm{vol}\Big(P(u)\big\vert_{Q}-vZ\Big)dvdu=\\
=\frac{3}{28}\int_{1}^{\frac{3}{2}}(u-1)\big((4-2u)\pi^*(H)-E_L\big)^2\cdot\big(2\pi^*(H)-E_C\big)du+\frac{3}{28}\int_{0}^{\frac{3}{2}}\int_{0}^{\infty}\mathrm{vol}\Big(P(u)\big\vert_{Q}-vZ\Big)dvdu=\\
=\frac{3}{28}\int_{1}^{\frac{3}{2}}(u-1)\big(2(4-2u)^2-2\big)du+\frac{3}{28}\int_{0}^{\frac{3}{2}}\int_{0}^{\infty}\mathrm{vol}\Big(P(u)\big\vert_{Q}-vZ\Big)dvdu=\\
=\frac{5}{224}+\frac{3}{28}\int_{0}^{\frac{3}{2}}\int_{0}^{\infty}\mathrm{vol}\Big(P(u)\big\vert_{Q}-v(\ell_1+2\ell_2)\Big)dvdu\leqslant\\
\leqslant\frac{5}{224}+\frac{3}{28}\int_{0}^{\frac{3}{2}}\int_{0}^{\infty}\mathrm{vol}\Big(P(u)\big\vert_{Q}-v\ell_1\Big)dvdu=\frac{5}{224}+\frac{109}{112}=\frac{223}{224}<1.
\end{multline*}
So we obtained a contradiction with Corollary~\ref{corollary:Kento-formula-Fano-threefold-surface-curve}. This completes the~proof of the~lemma.
\end{proof}
\begin{corollary}\label{5}
The curve $\pi(Z)$ is not a line that intersect $C$.
\end{corollary}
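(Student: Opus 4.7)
The plan is to reduce the statement directly to Lemma~\ref{lemma:3-12-Z-in-Q} via the enumeration of $G$-invariant lines in Section~\ref{3}. Suppose, for contradiction, that $\pi(Z)$ is a line in $\DP^3$ meeting $C$. Since $Z$ is a $G$-invariant irreducible curve and $\pi$ is $G$-equivariant, its image $\pi(Z)$ is a $G$-invariant irreducible curve in $\DP^3$, so $\pi(Z)$ is a $G$-invariant line meeting $C$. By the discussion in Section~\ref{3}, the only such lines are $L_{12}$, $L_{34}$, and $L_{56}$.

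Next I would verify by direct substitution that each of these lines is contained in at least one of the three $G$-invariant quadrics $Q_1$, $Q_2$, $Q_3$ of Lemma~\ref{lemmasystemofquadrics}. Parametrizing $L_{12}$ as $[s:0:0:t]$ makes the defining equations of both $Q_2$ and $Q_3$ vanish identically, so $L_{12}\subset Q_2\cap Q_3$. Parametrizing $L_{34}$ as $[1:t:1:t]$ yields $L_{34}\subset Q_1\cap Q_2$, and parametrizing $L_{56}$ as $[a:b:-a:-b]$ (with $[a:b]$ derived from the two points $P_5,P_6$ listed in Section~\ref{3}) yields $L_{56}\subset Q_1\cap Q_3$.

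It follows that $\pi(Z)$ is contained in some $G$-invariant quadric $Q_i\in\MM$, where $\MM$ is the linear system from Lemma~\ref{lemmasystemofquadrics}. Since $\pi(Z)$ is one-dimensional, $Z$ is not $\pi$-exceptional, so $Z$ lies on the proper transform of $Q_i$ in $X$, which is a surface of the type considered in Lemma~\ref{lemma:3-12-Z-in-Q}. This contradicts the conclusion $Z\not\subset Q$ of that lemma.

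The only nontrivial step in the argument is the explicit verification that each of the three $G$-invariant lines $L_{12}$, $L_{34}$, $L_{56}$ meeting $C$ sits inside one of the quadrics $Q_1$, $Q_2$, $Q_3$; once this is checked, the corollary follows immediately from Lemma~\ref{lemma:3-12-Z-in-Q}.
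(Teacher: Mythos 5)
Your proof is correct and follows essentially the same route as the paper: identify $L_{12}$, $L_{34}$, $L_{56}$ as the only $G$-invariant lines meeting $C$, check $L_{12}\subset Q_2\cap Q_3$, $L_{34}\subset Q_1\cap Q_2$, $L_{56}\subset Q_1\cap Q_3$, and invoke Lemma~\ref{lemma:3-12-Z-in-Q}. Your explicit parametrizations confirm the containments as stated (and incidentally correct a typo in the paper, which lists $L_{34}$ twice instead of $L_{56}$ in the third containment).
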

\begin{proof}
Recall from Section \ref{3} that there are exactly 3 $G$-invariant lines in $\DP^3$ that intersect the curve $C$.
These are the lines $L_{12}$, $L_{34}$, $L_{56}$. We have that $L_{12}\subset Q_2\cap Q_3$, $L_{34}\subset Q_1\cap Q_2$, $L_{34}\subset Q_1 \cap Q_3$. Thus, the lemma above gives us the result.
\end{proof}
\noindent By Lemma \ref{lemma144}, one has $\alpha_{G,Z}(X)<\frac{3}{4}$.
Thus, by lemma \cite[Lemma 1.4.1]{Fano21} and it's proof, there is a~$G$-invariant effective $\mathbb{Q}$-divisor $D$ on the~threefold $X$
such that $D\sim_{\mathbb{Q}}-K_X$ and $Z\subseteq\mathrm{Nklt}(X,\lambda D)$ for some~positive rational number $\lambda<\frac{3}{4}$.
\begin{lemma}
\label{lemma:3-12-Nklt}
Let $S$ be an~irreducible surface in $X$. Suppose that $S\subset\mathrm{Nklt}(X,\lambda D)$.
Then either $S\in|\pi^*(2H)-E_C|$ and $S$ is $G$-invariant or $S=E_L$. 
\end{lemma}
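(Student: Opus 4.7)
Since $S \subset \mathrm{Nklt}(X, \lambda D)$ with $\lambda < \tfrac{3}{4}$, the coefficient $\mu := \mathrm{mult}_S D$ satisfies $\mu > \tfrac{4}{3}$. Because $\mathrm{Nklt}(X, \lambda D)$ is $G$-invariant, the whole $G$-orbit of $S$ lies in it and contributes to $D$ with coefficient $\mu$ on each element; writing $D = \mu T + D''$ with $T := \sum_{g \in G/\mathrm{Stab}(S)} gS$ and $D''$ effective, the $\DQ$-class $-K_X - \mu T$ is pseudo-effective. The plan is to enumerate all possible classes of irreducible surfaces on $X$ and, for each one, compute the pseudo-effective threshold
\[
\tau(F) := \sup\{u \in \DQ_{\geq 0} : -K_X - uF \text{ is pseudo-effective}\}
\]
from Corollary \ref{4} and the relation $R + 2E_C = 4(\pi^*(H) - E_L) + 3E_L$, and then to show $\mu \leq \tau(T) < \tfrac{4}{3}$ in every case except the two allowed by the lemma.

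\textbf{Enumeration and thresholds.} The classes of irreducible surfaces on $X$ are $E_L$, $E_C$, $R$, and strict transforms $\pi^*(dH) - m_L E_L - m_C E_C$ with $d = \deg \pi(S)$ and $m_L, m_C \in \{0, 1\}$. The identity $\pi^*(H)^2 \cdot D = 4$ together with $\mu > \tfrac{4}{3}$ forces $d \leq 2$; a plane cannot contain the twisted cubic $C$, so $d = 1$ forces $m_C = 0$; and the class $\pi^*(2H) - E_L - E_C$ is unrepresented, because $C \cap L = \varnothing$ and $r \neq \pm 1$ give $h^0(\MI_{C \cup L}(2)) = 0$ by a direct linear-algebra check in $\MM$. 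On the remaining candidate classes I expect to compute
\[
\tau(E_L) = \tau(\pi^*(H)) = \tau(\pi^*(2H) - E_C) = \tfrac{3}{2},\qquad \tau(\pi^*(H) - E_L) = 2,
\]
\[
\tau(E_C) = \tau(R) = \tau(\pi^*(2H) - E_L) = 1,\qquad \tau(\pi^*(2H)) = \tfrac{3}{4}.
\]

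\textbf{Conclusion and main obstacle.} If $S$ is $G$-invariant then $T = S$ and $\mu \leq \tau(S)$ immediately excludes $S \in \{E_C, R, \pi^*(2H), \pi^*(2H) - E_L\}$, leaving only $S = E_L$ or $S \in |\pi^*(2H) - E_C|$. If $S$ is not $G$-invariant then its orbit has size $n \in \{2, 4\}$ and $\mu \leq \tau(S)/n$; Lemma \ref{fixedpoints} (no $G$-invariant plane in $\DP^3$) guarantees this alternative for both plane classes, yielding $\mu \leq 1$, and applied to non-$G$-invariant representatives of the three remaining quadric classes it yields $\mu \leq \tfrac{3}{4}$. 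Hence only $S = E_L$ and $G$-invariant members of $|\pi^*(2H) - E_C|$ survive, which is the claim. The main delicate step is the threshold bookkeeping: because of the above relation each class has a one-parameter family of non-negative expressions in the four generators of Corollary \ref{4}, and identifying the binding constraint on this family — typically an interplay between the lower bound $c \geq \tfrac{1}{2}$ forced by the coefficient of $E_C$ and an upper bound on $c$ coming from the coefficient of $\pi^*(H)$ — is where care is needed.
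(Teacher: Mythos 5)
Your proposal is correct and takes essentially the same route as the paper: both extract $\mathrm{mult}_S D>\tfrac{4}{3}$ from the nklt condition, bound $\deg\pi(S)\leqslant 2$, test effectivity of the residual against the generators $E_L$, $E_C$, $R$, $\pi^*(H)-E_L$ of Corollary \ref{4}, and use Lemma \ref{fixedpoints} together with the $G$-orbit of $S$ to kill the plane classes and the non-invariant quadrics. Your repackaging of the case-by-case residual computations as pseudo-effective thresholds $\tau$ (all of which I checked are correct) is only a cosmetic reorganization of the same linear algebra.
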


\begin{proof}
We have $D\sim_{\mathbb{Q}} 4\pi^*(H)-E_C-E_L$ and $\lambda<\frac{3}{4}$. By assumption we have $D=aS+\Delta$ where $a\in\DQ$ such that $a\ge \frac{1}{\lambda}>\frac{4}{3}$ and $\Delta$ is an effective $\DQ$ divisor on $X$ whose support does not contain $S$.
\\Assume $S=E_C$. Then we get 
$$\pi^*(4H)-E_C-E_L\sim_{\DQ} aE_C+\Delta\Rightarrow\Delta \sim_{\DQ} \pi^*(4H)-(1+a)E_C-E_L=R-(a-1)E_C$$
-contradiction.
\\Assume $S\ne E_L$, $S\ne E_C$ then $\pi(S)\subset \DP^3$ is the surface of some degree $d$. We have that:
$$4H\sim_{\DQ}a\pi(S)+\pi(\Delta)\Rightarrow 4\ge ad\Rightarrow d=1\text{ or }d=2.$$
The latter holds since $a>\frac{4}{3}$. Then $S$ is given by $$S\sim_{\DQ}d\pi^*(H)-m_LE_L-m_CE_C.$$
 By Corollary \ref{4} we know that the cone of effective divisors is generated by $E_L$, $E_C$, $R$, $H-E_L$ so we have that:
$$\Delta \sim_{\DQ}\pi^*(4H)-E_C-E_L-a(\pi^*(4H)-(1+a)E_C-E_L=R-(a-1)E_C)\sim $$
$$\sim a_1E_L+a_2E_C+a_3(\pi^*(4H)-2E_C-E_L)+a_4(\pi^*(H)-E_L)$$
for $a_1\ge0,\text{ }a_2\ge0,\text{ }a_3\ge0,\text{ }a_4\ge0$, which gives us a system of equations:
\begin{equation}
\begin{cases}
-4a_3 - a_4 - d + 4=0,\\
-1 + m_L - a_1 + a_3 + a_4=0,\\ -1 + m_C - a_2 + 2a_3=0,\\
a_1\ge0,\text{ }a_2\ge0,\text{ }a_3\ge0,\text{ }a_4\ge0, a>4/3
\end{cases}
\label{eq:2}
\end{equation}
Thus, if $d=2$ then $(m_L,m_C)=(0,1)$ or $(m_L,m_C)=(1,1)$ so we have the following options:
\begin{itemize}
     \item $m_C= 1$ and $m_L= 1$. This gives us the linear system $|S|=|2\pi^*(H)-m_LE_L-m_CE_C|$ which on $\DP^3$ corresponds to the linear system of quadrics which contain a line $L$  and a cubic $C$. But this is impossible since by assumption $L$ and $C$ so not intersect. Thus, this linear system does not contain effective divisors.
    \item $m_C=1$, $m_L=0$. Suppose $S\in|2\pi^*(H)-E_C|$ and $S$ is not $G$-invariant. We have that $S\subset\mathrm{Nklt}(X,\lambda D)$ where $D$ is a $G$-invariant $\DQ$-divisor. We can write $D$ as $D=\sum_i a_iD_i$ where $D_i$-s are irreducible components of $D$, $a_i\in \DQ_{>0}$ and we have $S=D_i$ for some $i$. We assumed that $S$ is not $G$-invariant thus if we take a non-trivial element $g\in G$ we will have that $S'=g(S)$ is $D_i$ which is one of the components of $D$ for $i\ne j$. Moreover $a_j=a_j=a$ since $D$ is $G$-invariant so we can write:
     $$\pi^{*}(4H)-E_C-E_L\sim_{\DQ}2a(2\pi^*(H)-E_C)+\Delta.$$
     Where $\Delta$ is an effective $\DQ$-divisor. Thus:
     $$4(1-a)\pi^*(H)+(-1 + 2a)E_C - E_L\sim_{\DQ}\Delta.$$
     By Corollary \ref{4} we know that the cone of effective divisors is generated by $E_L$, $E_C$, $R$, $H-E_L$ so we can write $\Delta$ as:
     $$\Delta=a_1E_L+a_2E_C+a_3(4\pi^*(H)-2E_C-E_L)+a_4(H-E_L).$$
     For $a_1\ge0,\text{ }a_2\ge0,\text{ }a_3\ge0,\text{ }a_4\ge0$. Solving the system of equations on coefficients we get that it has no solutions.
    \\Suppose $S\in|2\pi^*(H)-E_C|$ and $S$ is  $G$-invariant. This case is possible.
    \end{itemize}
\noindent Similarly, if $d=1$ then $(m_L,m_C)=(0,0)$ or $(m_L,m_C)=(1,0)$ so we have the following options:
 \begin{itemize}
     \item $m_C=0$, $m_L=1$. Suppose $S\in|\pi^*(H)-E_L|$. We have that $S\subset\mathrm{Nklt}(X,\lambda D)$ where $D$ is a $G$-invariant $\DQ$-divisor. We can write $D$ as $D=\sum_i a_iD_i$ where $D_i$-s are irreducible components of $D$, $a_i\in \DQ_{>0}$ and we have $S=D_i$ for some $i$. Note that $S\in |\pi^*(H)-E_L|$ where $|\pi^*(H)-E_L|$ does not have $G$-invariant elements. Take a non-trivial element $g\in G$. We have that $S'=g(S)$ is $D_i$ which is one of the components of $D$ for $i\ne j$. Moreover $a_j=a_j=a$ since $D$ is $G$-invariant so we can write:
     $$\pi^{*}(4H)-E_C-E_L\sim_{\DQ}2a(\pi^*(H)-E_L)+\Delta.$$
     Where $\Delta$ is an effective $\DQ$-divisor. Thus:
     $$(4-2a)\pi^{*}(H)-E_C-(1-2a)E_L\sim_{\DQ}\Delta.$$
     By Corollary \ref{4} we know that the cone of effective divisors is generated by $E_L$, $E_C$, $R$, $H-E_L$ so we can write $\Delta$ as:
     $$\Delta=a_1E_L+a_2E_C+a_3(4\pi^*(H)-2E_C-E_L)+a_4(\pi^*(H)-E_L).$$
     For $a_1\ge0,\text{ }a_2\ge0,\text{ }a_3\ge0,\text{ }a_4\ge0$.  Solving the system of equations on coefficients we get that it has no solutions.
     \\
     \item $m_C=0$, $m_L=0$. Suppose $S\in|\pi^*(H)|$. We have that $S\subset\mathrm{Nklt}(X,\lambda D)$ where $D$ is a $G$-invariant $\DQ$-divisor. We can write $D$ as $D=\sum_i a_iD_i$ where $D_i$-s are irreducible components of $D$, $a_i\in \DQ_{>0}$ and we have $S=D_i$ for some $i$. Note that $S\in |\pi^*(H)|$ where $|\pi^*(H)|$ does not have $G$-invariant elements. Take a non-trivial element $g\in G$. We have that $S'=g(S)$ is $D_i$ which is one of the components of $D$ for $i\ne j$. Moreover $a_j=a_j=a$ since $D$ is $G$-invariant so we can write:
     $$\pi^{*}(4H)-E_C-E_L\sim_{\DQ}2a\pi^*(H)+\Delta.$$
     Where $\Delta$ is an effective $\DQ$-divisor. Thus:
     $$(4-2a)\pi^{*}(H)-E_C-E_L\sim_{\DQ}\Delta.$$
     By Corollary \ref{4} we know that the cone of effective divisors is generated by $E_L$, $E_C$, $R$, $H-E_L$ so we can write $\Delta$ as:
     $$\Delta=a_1E_L+a_2E_C+a_3(4\pi^*(H)-2E_C-E_L)+a_4(H-E_L).$$
    For $a_1\ge0,\text{ }a_2\ge0,\text{ }a_3\ge0,\text{ }a_4\ge0$.  Solving the system of equations on coefficients we get that it has no solutions. 
 \end{itemize}
 We see that we excluded all options except  $S\in|\pi^*(2H)-E_C|$ and $S$ is $G$-invariant or $S=E_L$.
\end{proof}
\begin{corollary}\label{pi(Z)isnotsurf}
$\pi(Z)$ is not a surface in $\mathrm{Nklt}(X,\lambda D)$.
\end{corollary}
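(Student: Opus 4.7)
The plan is to read off the corollary directly from Lemma \ref{lemma:3-12-Nklt}, combined with the previously established Lemmas \ref{lemma:3-12-Z-in-E} and \ref{lemma:3-12-Z-in-Q}. Suppose for contradiction that $Z$ is contained in some irreducible surface $S$ that is an irreducible component of $\mathrm{Nklt}(X,\lambda D)$. Lemma \ref{lemma:3-12-Nklt} lists exactly two possibilities for such an $S$: either $S=E_L$, or $S\in|\pi^*(2H)-E_C|$ with $S$ being $G$-invariant.

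First I would dispose of the case $S=E_L$: this says $Z\subset E_L$, which directly contradicts Lemma \ref{lemma:3-12-Z-in-E}.

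Next, suppose $S\in|\pi^*(2H)-E_C|$ is $G$-invariant. Then the pushforward $\pi(S)$ is a $G$-invariant quadric surface in $\DP^3$ containing the twisted cubic $C$, i.e. a $G$-invariant member of the linear system $\MM$ of Section \ref{2}. By Lemma \ref{lemmasystemofquadrics}, there are exactly three such quadrics $Q_1$, $Q_2$, $Q_3$, all smooth, and $S$ must be the proper transform on $X$ of one of them. Since $Z\subset S$ and $S$ is not $\pi$-exceptional, we obtain $\pi(Z)\subset Q_i$ for some $i\in\{1,2,3\}$, and hence $Z\subset S\subset \pi^{-1}(Q_i)$, giving $Z\subset Q$ for the $G$-invariant quadric $Q=Q_i\in\MM$ in the sense of Lemma \ref{lemma:3-12-Z-in-Q}. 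This is precisely the conclusion excluded by Lemma \ref{lemma:3-12-Z-in-Q}, a contradiction.

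Both cases being impossible, no surface in $\mathrm{Nklt}(X,\lambda D)$ can contain $Z$, which is the assertion of the corollary. The substantive work has already been carried out in the previous three lemmas, so there is no real obstacle here; the only thing to verify is that the classification in Lemma \ref{lemma:3-12-Nklt} matches exactly the two situations excluded by Lemmas \ref{lemma:3-12-Z-in-E} and \ref{lemma:3-12-Z-in-Q}, which it does.
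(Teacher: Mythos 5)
Your proposal is correct and is exactly the argument the paper intends: the corollary is stated without proof as an immediate consequence of Lemma \ref{lemma:3-12-Nklt} (which restricts the possible surfaces in $\mathrm{Nklt}(X,\lambda D)$ to $E_L$ or a $G$-invariant member of $|\pi^*(2H)-E_C|$) combined with Lemmas \ref{lemma:3-12-Z-in-E} and \ref{lemma:3-12-Z-in-Q}, just as in item $\circled{5}$ of the plan of the proof. Your identification of a $G$-invariant member of $|\pi^*(2H)-E_C|$ with the proper transform of one of the three quadrics $Q_1,Q_2,Q_3$ from Lemma \ref{lemmasystemofquadrics} is the right reading of Lemma \ref{lemma:3-12-Z-in-Q}.
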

\begin{corollary}
\label{corollary:3-12-EC}
One has $Z\not\subset E_C$.
\end{corollary}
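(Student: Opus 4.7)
The plan is to suppose $Z\subset E_C$ and derive a contradiction by combining the description of $\mathrm{Nklt}(X,\lambda D)$ obtained from Lemma~\ref{lemma:3-12-Nklt} with Lemma~\ref{CorollaryA15} applied to the del~Pezzo fibration $\eta\colon X\to\mathbb{P}^1$.

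The first step is to identify $\pi(Z)$. Since $E_C=\pi^{-1}(C)$, the irreducible closed subvariety $\pi(Z)\subset C\cong\mathbb{P}^1$ is either a point or all of $C$. If $\pi(Z)$ were a single point $p\in C$, then $Z$ would coincide with the fiber of the ruling $E_C\to C$ over $p$, and the $G$-invariance of $Z$ would force $p$ to be a $G$-fixed point of $\mathbb{P}^3$, contradicting the corollary of Lemma~\ref{fixedpoints}. Hence $\pi(Z)=C$ and the finite map $\pi|_Z\colon Z\to C$ has some degree $d\geq 1$, so $\pi_*Z=d[C]$.

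The second step is the numerical contradiction. By Corollary~\ref{pi(Z)isnotsurf}, $Z$ is not contained in any two-dimensional component of $\mathrm{Nklt}(X,\lambda D)$, so $Z$ is an irreducible component of the one-dimensional part of $\mathrm{Nklt}(X,\lambda D)$. Since $-K_X$ is ample (hence nef and big), $\lambda D\sim_{\mathbb{Q}}-\lambda K_X$ with $\lambda<\tfrac{3}{4}<1$, and $\eta$ is a surjective morphism with connected fibers onto $\mathbb{P}^1$, Lemma~\ref{CorollaryA15} applies and yields
\[
\eta^{*}\mathcal{O}_{\mathbb{P}^1}(1)\cdot Z\leq 1.
\]
From the commutative diagram in Section~\ref{geometry}, $\eta$ factors through the projection $\nu\colon\mathrm{Bl}_L\mathbb{P}^3\to\mathbb{P}^1$ from $L$, whose fibers are planes through $L$, so $\eta^{*}\mathcal{O}_{\mathbb{P}^1}(1)\sim\pi^{*}(H)-E_L$. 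Since $Z\subset E_C$ and $E_C\cap E_L=\emptyset$ (because $L\cap C=\emptyset$), one has $E_L\cdot Z=0$, and by the projection formula $\pi^{*}(H)\cdot Z=H\cdot\pi_*Z=3d\geq 3$. Therefore $\eta^{*}\mathcal{O}_{\mathbb{P}^1}(1)\cdot Z\geq 3$, contradicting the bound above.

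The only delicate point of the argument is ensuring that $Z$ is indeed an irreducible component of the one-dimensional part of $\mathrm{Nklt}(X,\lambda D)$ and not swallowed by some two-dimensional component; this is precisely what Corollary~\ref{pi(Z)isnotsurf} (the outcome of step $\circled{5}$) guarantees. Once this is granted, the contradiction is purely numerical and avoids any further Abban--Zhuang type computation of $S\big(W^{E_C}_{\bullet,\bullet};Z\big)$.
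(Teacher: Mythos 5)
Your proposal is correct and follows essentially the same route as the paper: the paper also rules out $\pi(Z)$ being a point via the absence of $G$-fixed points, concludes $\pi(Z)=C$, and then contradicts Lemma~\ref{CorollaryA15} by intersecting $Z$ with a general fiber of $\eta$ (which meets $Z$ in at least $3$ points since the fibers are proper transforms of planes through $L$ and $C$ is a cubic disjoint from $L$). Your write-up simply makes explicit the intermediate facts ($\eta^{*}\mathcal{O}_{\mathbb{P}^1}(1)\sim\pi^{*}(H)-E_L$, $E_L\cdot Z=0$, and the role of Corollary~\ref{pi(Z)isnotsurf}) that the paper leaves implicit.
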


\begin{proof}
Suppose that $Z\subset E_C$. Observe that $\pi(Z)$ is not a~point, since $\mathbb{P}^3$ does not have $G$-fixed points by Lemma~\ref{fixedpoints}.
Hence, we see that $\pi(Z)$ is the twisted cubic $C$.
\\
Let $S$ be a~general fiber of $\eta$. Then $S\cdot Z\ge 3$, which contradicts Lemma \ref{CorollaryA15}.
\end{proof}

\begin{lemma}
\label{lemma:3-12-curves-line}
The curve $\pi(Z)$ is the~line.
\end{lemma}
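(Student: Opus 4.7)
The plan is to push $D$ down to $\DP^3$ and then apply Lemma \ref{CorollaryA13}, with Lemma \ref{lemma:3-12-Nklt} and Lemma \ref{lemma:3-12-Z-in-Q} ruling out the possibility that $\pi(Z)$ is hidden inside a surface component of the non-klt locus. First, since $\pi^{-1}(L)=E_L$ and $\pi^{-1}(C)=E_C$, the conditions $Z\not\subset E_L$ (Lemma \ref{lemma:3-12-Z-in-E}) and $Z\not\subset E_C$ (Corollary \ref{corollary:3-12-EC}) ensure that $\pi(Z)$ is an irreducible curve in $\DP^3$ different from both $L$ and $C$, and the generic point of $Z$ maps into the open subset $U:=\DP^3\setminus(C\cup L)$ on which $\pi$ is an isomorphism.

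Set $B:=\pi_*D$. Then $B$ is a $G$-invariant effective $\DQ$-divisor on $\DP^3$ with $B\sim_{\DQ}4H$. Because $D\sim_{\DQ}-K_X=\pi^*(4H)-E_C-E_L$, matching coefficients in $\Pic(X)$ forces
$$D=D'+(m_C-1)E_C+(m_L-1)E_L,$$
where $D'$ is the strict transform of $B$ and $m_C=\mathrm{mult}_C B\geqslant 1$, $m_L=\mathrm{mult}_L B\geqslant 1$. Combined with $K_X=\pi^*K_{\DP^3}+E_C+E_L$, this yields the log pull-back identity
$$K_X+\lambda D=\pi^*(K_{\DP^3}+\lambda B)+(1-\lambda)E_C+(1-\lambda)E_L.$$
Since $\lambda<\tfrac{3}{4}$, the coefficients $1-\lambda$ are strictly positive, so on $U$ the pairs $(X,\lambda D)$ and $(\DP^3,\lambda B)$ have the same non-klt locus. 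As the generic point of $Z$ lies over $U$, we conclude that $\pi(Z)\subset\mathrm{Nklt}(\DP^3,\lambda B)$.

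The irreducible curve $\pi(Z)$ now sits either in a $1$-dimensional component of $\mathrm{Nklt}(\DP^3,\lambda B)$ or inside a $2$-dimensional (surface) component $T$. In the first case, applying Lemma \ref{CorollaryA13} with $\lambda B\sim_{\DQ}-\lambda K_{\DP^3}$ and $\lambda<\tfrac{3}{4}$ gives $H\cdot\pi(Z)\leqslant 1$, so $\pi(Z)$ is a line and we are done. In the second case, $T$ must be a component of $B$ with $\mathrm{mult}_T B\geqslant 1/\lambda>4/3$, so its strict transform $\tilde T$ on $X$ satisfies $\mathrm{ord}_{\tilde T}D\geqslant 4/3$ and hence $\tilde T\subset\mathrm{Nklt}(X,\lambda D)$. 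Since $\tilde T$ is not exceptional, Lemma \ref{lemma:3-12-Nklt} forces $\tilde T$ to be a $G$-invariant element of $|\pi^*(2H)-E_C|$, so $T$ is a $G$-invariant quadric in $\DP^3$ containing $C$, contradicting Lemma \ref{lemma:3-12-Z-in-Q}.

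The one delicate bookkeeping point is the log pull-back identity, and in particular the fact that the coefficients of $E_C$ and $E_L$ come out to be $1-\lambda>0$ rather than something that could contribute negative discrepancies. This positivity is what lets the non-klt locus descend cleanly to $\DP^3$, after which the conclusion is a direct appeal to the degree bound of Lemma \ref{CorollaryA13} and to the classification of surfaces in $\mathrm{Nklt}(X,\lambda D)$ provided by Lemma \ref{lemma:3-12-Nklt}.
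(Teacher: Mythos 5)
Your proposal is correct and follows essentially the same route as the paper: push $D$ down to $\DP^3$, use Lemma \ref{lemma:3-12-Nklt} together with Lemma \ref{lemma:3-12-Z-in-Q} to rule out $\pi(Z)$ lying in a surface component of the non-klt locus, and then apply the degree bound of Lemma \ref{CorollaryA13}. The only difference is that you spell out the log pull-back identity and the multiplicity bookkeeping that the paper leaves implicit, which is a welcome clarification rather than a new argument.
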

\begin{proof}
Let $\overline{D}=\pi(D)$, $\overline{Z}=\pi(Z)$. We see that $\overline{Z}$ is a~$G$-invariant curve in $\mathbb{P}^3$ such that such that Z is not contained in a $G$-invariant surface in $|2\pi^*(H)-E_C|$ (by Lemma \ref{lemma:3-12-Z-in-Q}), $Z\not\subset E_L$ (by Lemma \ref{lemma:3-12-Z-in-E}) and $Z\not\subset E_C$ (by Lemma \ref{corollary:3-12-EC}). Then $\overline{Z}\subset\mathrm(\DP^3,\lambda \overline{D})$ and $\overline{Z}$ is not contained in any surface contained in $\mathrm{Nklt}(\DP^3,\lambda \overline{D}) $ by Lemma \ref{lemma:3-12-Nklt}. Now we apply  Lemma \ref{CorollaryA13} and get that $\MO_{\DP^3}(1)\cdot \overline{Z}\le 1$. Thus $\MO_{\DP^3}(1)\cdot \overline{Z}= 1$ so $\pi(Z)$ is a line.
\end{proof}

\begin{corollary}
Such irreducible curve $Z$ does not exist.
\end{corollary}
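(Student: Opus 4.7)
The plan is to derive the final contradiction by combining Lemma \ref{lemma:3-12-curves-line} with the earlier exclusions of specific lines. Since all the heavy computational work (volume integrals on del Pezzo surfaces of degrees 5 and 6, and Zariski decompositions on $E_L$ and $Q$) was carried out in Lemmas \ref{lemma:3-12-Z-lines}, \ref{lemma:3-12-Z-in-E}, \ref{lemma:3-12-Z-in-Q}, and Corollaries \ref{corollary:3-12-EC}, \ref{5}, what remains is an essentially bookkeeping argument.

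First I would observe that, by Lemma \ref{lemma:3-12-curves-line}, the image $\pi(Z)$ is a line in $\mathbb{P}^3$. Since the morphism $\pi$ is $G$-equivariant and $Z$ is $G$-invariant, this line must be $G$-invariant. Next I would invoke the classification of $G$-invariant lines given in Section \ref{3}: every $G$-invariant line in $\mathbb{P}^3$ is either $L$ itself, one of the infinitely many $G$-invariant lines lying on the quadric $Q_4$ that are disjoint from $C$ (and different from $L$), or one of the three lines $L_{12}$, $L_{34}$, $L_{56}$ which intersect $C$.

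Now I would rule out each of these three possibilities in turn. If $\pi(Z)=L$, then $Z\subset\pi^{-1}(L)=E_L$, which contradicts Lemma \ref{lemma:3-12-Z-in-E}. If $\pi(Z)$ is a $G$-invariant line disjoint from $C$ and different from $L$, this is exactly the case excluded by Lemma \ref{lemma:3-12-Z-lines}. Finally, if $\pi(Z)\in\{L_{12},L_{34},L_{56}\}$, then $\pi(Z)$ is a line meeting $C$, which is ruled out by Corollary \ref{5}.

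The only step requiring real care is ensuring that the classification of $G$-invariant lines in Section \ref{3} is complete, so that no exotic fourth case escapes the trichotomy above. This was already handled by the explicit parametrization $\{\lambda x_0+\mu x_2=0,\ \lambda x_3+\mu x_1=0\}$ together with the intersection analysis of $Q_4\cap C$ which singled out the three secants $L_{12}$, $L_{34}$, $L_{56}$. With this in hand the three cases above are exhaustive, so the assumption that such a $Z$ exists is untenable. This contradiction proves the corollary and, consequently, the Main Theorem.
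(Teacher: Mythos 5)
Your proposal is correct and follows essentially the same route as the paper: invoke Lemma \ref{lemma:3-12-curves-line} to reduce to the case of a $G$-invariant line, then exclude $L$ via Lemma \ref{lemma:3-12-Z-in-E}, the $G$-invariant lines disjoint from $C$ via Lemma \ref{lemma:3-12-Z-lines}, and the three lines meeting $C$ via Corollary \ref{5}. Your additional remarks on $G$-equivariance of $\pi$ and on the exhaustiveness of the classification of $G$-invariant lines from Section \ref{3} are sound and only make the bookkeeping more explicit.
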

\begin{proof}
By Lemma \ref{lemma:3-12-curves-line} we know that $\pi(Z)$ is a line. We have that $\pi(Z)\ne L$ (by Lemma \ref{lemma:3-12-Z-in-E}),
$\pi(Z)$ is not one of the $G$-invariant lines which does not intersect $C$ and $\pi(Z)\ne L$ (by Lemma \ref{lemma:3-12-Z-lines}) and  $\pi(Z)$ is not one of the $G$-invariant lines which intersect $C$ (by Corollary \ref{5}). So such irreducible curve $Z$ does not exist.
\end{proof}
\noindent This completes the proof of Main Theorem.

$ $ \\\newline\newline
\it{Elena Denisova}
\newline
\textnormal{The University of Edinburgh, Edinburgh, Scotland}
\newline
\textnormal{\texttt{e.denisova@sms.ed.ac.uk}}
\end{document}